\documentclass[12pt]{article}

\usepackage[T1]{fontenc}
\usepackage[utf8]{inputenc}
\usepackage{lmodern}
\usepackage{amsmath,amssymb,amsthm,mathtools}
\usepackage{microtype}
\usepackage{cite}
\usepackage[hidelinks]{hyperref}

\hypersetup{
  pdftitle={Target Geometry in Prescribed-Value Schwarz Lemmas for Harmonic Maps},
  pdfauthor={Miljan Knezevic and Miodrag Mateljevic},
  pdfkeywords={harmonic mappings, prescribed values, Mobius-weighted Hilbert pairing, round affine sections, Hilbert balls, Cayley-Klein geometry, pointwise distortion}
}

\numberwithin{equation}{section}

\theoremstyle{plain}
\newtheorem{theorem}{Theorem}[section]
\newtheorem{lemma}[theorem]{Lemma}
\newtheorem{proposition}[theorem]{Proposition}
\newtheorem{corollary}[theorem]{Corollary}
\theoremstyle{definition}
\newtheorem{definition}[theorem]{Definition}
\newtheorem{example}[theorem]{Example}
\theoremstyle{remark}
\newtheorem{remark}[theorem]{Remark}

\newcommand{\D}{\mathbb D}
\newcommand{\T}{\mathbb T}
\newcommand{\C}{\mathbb C}
\newcommand{\R}{\mathbb R}
\newcommand{\B}{\mathbb B}
\newcommand{\dm}{\,dm}
\newcommand{\re}{\operatorname{Re}}
\newcommand{\Aut}{\operatorname{Aut}}
\newcommand{\CK}{\operatorname{CK}}

\title{Target Geometry in Prescribed-Value Schwarz Lemmas for Harmonic Maps}
\author{Miljan Kne\v{z}evi\'c\thanks{Corresponding author. E-mail: miljan.knezevic@matf.bg.ac.rs}\\
Miodrag Mateljevi\'c\thanks{E-mail: miodrag@matf.bg.ac.rs}\\[0.5em]
\small Faculty of Mathematics, University of Belgrade\\
\small Studentski trg 16, 11000 Belgrade, Serbia}
\date{}

\begin{document}
\maketitle

\begin{abstract}
We study harmonic maps \(F:\D\to G\) into bounded domains in real Hilbert
spaces, prescribing \(F(0)\) and \(dF_0(\R^2)\) when \(dF_0\) is nonzero and
conformal. We prove a target-independent identity for the
M\"obius-weighted Hilbert pairing. After normalization, the pairing is affine
in \(\|dF_0\|\) with positive slope, yielding an exact equivalence between
the derivative extremal problem and a boundary-pairing problem.

For a round affine section, this gives a necessary and sufficient integral
criterion for the M\"obius parametrization to be extremal. A
supporting-hyperplane condition guarantees the required integral inequality
and characterizes equality. Examples show that roundness alone is
insufficient and that the supporting condition is not necessary. For unit
balls of real Hilbert spaces of dimension at least two, including
infinite-dimensional spaces, we obtain the sharp prescribed-value
Schwarz-Pick estimate, all equality cases, and quantitative \(L^2\) boundary
stability.

We also prove a Cayley-Klein contraction under pointwise distortion. For
\(K\geq1\), let \(M_K\) denote the supremum of \(L_F(0)\) over harmonic maps
\(F:\D\to\B_H\) satisfying \(F(0)=0\) and
\(0<\ell_F(0)\leq L_F(0)\leq K\ell_F(0)\), where \(L_F(0)\) and
\(\ell_F(0)\) are the maximal and minimal stretchings at the origin, respectively.
We determine \(M_K\), identify the unique optimizing parameter, and
characterize all extremals. The function \(K\mapsto M_K\) is strictly increasing, with \(M_1=1\) and
\(M_K\to4/\pi\) as \(K\to\infty\).
\end{abstract}

\noindent\textbf{Keywords:} Harmonic mappings, prescribed values, M\"obius-weighted Hilbert pairing,
round affine sections, Hilbert balls, Cayley-Klein geometry, pointwise distortion

\medskip
\noindent\textbf{2020 Mathematics Subject Classification.} Primary: 30C75, 30C80, 31A05; Secondary: 30C62, 30F45, 46C05, 51M10

\section{Introduction}

Derivative estimates for harmonic maps depend essentially on the
prescribed value, because conformal changes of target coordinates need
not preserve harmonicity. When the differential at the reference point
is conformal, its image plane provides a second geometric constraint.
We prescribe both the value and this plane and seek the largest possible
operator norm of the differential.

Let $\D\subset\C$ be the unit disk, let $G$ be a bounded domain in a real
Hilbert space $H$ with $\dim H\geq2$, let $p\in G$, and
fix a real two-dimensional linear subspace $\Lambda\subset H$. We consider
the class
\begin{equation}\label{eq:intro-admissible-class}
\begin{aligned}
 \mathcal F_G(p,\Lambda)=\{F:\D\to G\text{ harmonic}:{}& F(0)=p,\ dF_0\ne0\text{ conformal},\\
 &dF_0(\R^2)=\Lambda\},
\end{aligned}
\end{equation}
and its extremal value
\begin{equation}\label{eq:intro-directional-value}
 M_G(p,\Lambda)=\sup_{F\in\mathcal F_G(p,\Lambda)}\|dF_0\|.
\end{equation}
Here $\|dF_0\|$ is the operator norm of the real differential. Conformality
means that $|dF_0v|=\lambda|v|$ for all $v\in\R^2$, with
$\lambda=\|dF_0\|>0$.
The prescribed plane is the image of this differential at the origin;
the map itself may leave the affine plane $\Pi=p+\Lambda$. If
$G\cap\Pi$ is a Euclidean disk, its M\"obius parametrization is a
candidate for the extremal map. The question is whether it remains extremal
among all harmonic disks in $G$ with the same prescribed value and
differential plane.

The key analytic result is a target-independent identity for the
\emph{M\"obius-weighted Hilbert pairing}. It is proved for
Bochner-integrable functions using only their mean and first cosine
and sine Fourier coefficients; no restriction on their range is
imposed at this stage. After normalization of the conformal differential,
the pairing is an affine function of \(\|dF_0\|\) with positive linear
coefficient. Theorem~\ref{thm:directional-paired-equivalence} is the
resulting equivalence theorem: on the normalized class, the derivative norm
and the boundary pairing induce the same ordering and have the same
maximizing sequences and extremals. The geometry of the target enters only
afterwards, through estimates for the boundary pairing.

To state this relation, suppose that $G\cap\Pi$ is a Euclidean disk of
radius $c>0$. Translating \(G\) and \(p\) by the same vector in \(\Lambda\) places
the center of the section at the point of \(\Pi\) nearest the origin.
This translation leaves the differential unchanged and gives a bijection
between the corresponding classes of harmonic maps. We retain the notation
\(G\) and \(p\) after this normalization. Let \(\pi_\Lambda\) be the
orthogonal projection onto \(\Lambda\) and write
\[
 p=p_\Lambda+p_\perp,
 \qquad p_\Lambda=\pi_\Lambda p,
 \qquad p_\perp\perp\Lambda.
\]
Thus
\[
 G\cap\Pi=\{p_\perp+v:v\in\Lambda,\ |v|<c\}.
\]
We set
\[
 a=|p_\Lambda|,\qquad b=|p_\perp|,\qquad
 s=\frac ac\in[0,1),\qquad R=\sqrt{c^2+b^2}.
\]
The number $c$ is the radius of the affine disk. For a point
$y=p_\perp+cu$ on its boundary circle, with $u\in\Lambda$ and $|u|=1$,
orthogonality gives
\[
 |y-p_\perp|=c,\qquad |y|=R.
\]
Thus $R$ is the common distance of these boundary points from the origin.
It equals the section radius $c$ exactly when $p_\perp=0$.
Neither spherical symmetry of $G$ nor containment in the ball of radius
$R$ is assumed.

We choose an orthonormal basis $e_1,e_2$ of $\Lambda$ with
$p_\Lambda=ae_1$, taking $e_1$ arbitrarily when $a=0$.
In the complex coordinates determined by this basis, the normalized
conformal parametrization is
\[
 T(z)=p_\perp+c\frac{s+z}{1+sz},
\]
where the complex summand is regarded as a vector in $\Lambda$.
Thus $T(0)=p$, and $T$ is a conformal diffeomorphism onto the section.
Its differential at the origin has norm $c(1-s^2)$.
This is the map $E_s^c$ defined in \eqref{eq:weighted-E-def}; the choice
of basis also fixes the orientation of its parametrization.

Let $m$ denote normalized Lebesgue measure on $\T=\partial\D$. Every bounded
Hilbert-valued harmonic map $F$ has a Poisson boundary function $\Phi_F$,
unique up to equality almost everywhere; see
Lemma~\ref{lem:hilbert-boundary}. For the fixed parametrization $T$,
the pairing is
\begin{equation}\label{eq:intro-weighted-pairing}
 I_s^c(\Phi)
 =\int_\T |1+s\zeta|^2
 \langle\Phi(\zeta),T(\zeta)\rangle\,dm(\zeta).
\end{equation}
The weight cancels the M\"obius denominator on the boundary.
Theorem~\ref{thm:weighted-pairing-identity} evaluates the resulting
integral in the real Hilbert inner product using the value and derivative
conditions.

Every $F\in\mathcal F_G(p,\Lambda)$ can be precomposed with a rotation
or reflection of $\D$ so that
\[
 F_x(0)=\lambda e_1,\qquad F_y(0)=\lambda e_2,
 \qquad \lambda=\|dF_0\|>0.
\]
These changes preserve the target, the prescribed value, and the
differential norm. We denote this normalized class by
\(\mathcal F_G^{\mathrm n}(p,\Lambda)\). For every
\(F\in\mathcal F_G^{\mathrm n}(p,\Lambda)\),
Theorem~\ref{thm:directional-paired-equivalence} gives
\begin{equation}\label{eq:intro-equivalence}
 I_s^c(\Phi_F)
 =c\|dF_0\|+2csa+(1+s^2)b^2.
\end{equation}
Since $c>0$, the two functionals have the same extremals and maximizing
sequences on the normalized class.

The M\"obius parametrization $T$ belongs to this class. Its boundary
function has constant norm $R$, and
$I_s^c(\Phi_T)=(1+s^2)R^2$. Applying
\eqref{eq:intro-equivalence} to $F$ and $T$ gives the difference formula
in Corollary~\ref{cor:mobius-extremality}: \begin{equation}\label{eq:intro-exact-mobius}
 I_s^c(\Phi_F)-I_s^c(\Phi_T)
 =c\bigl(\|dF_0\|-\|dT_0\|\bigr).
\end{equation}
It follows that $T$ is extremal if and only if
$I_s^c(\Phi_F)\leq(1+s^2)R^2$ for every normalized admissible map $F$.
This is the integral criterion in Corollary~\ref{cor:mobius-extremality}.
Proposition~\ref{prop:section-support-criterion} gives a geometric
sufficient condition that guarantees this inequality for the entire
admissible class.

The sufficient condition is expressed in terms of supporting hyperplanes
along the boundary circle. If
\begin{equation}\label{eq:intro-support-condition}
 \langle X,y\rangle\leq R^2
 \qquad(X\in\overline G,\quad y\in\partial(G\cap\Pi)),
\end{equation}
where the boundary is taken in $\Pi$, then integration against the positive
weight proves the required inequality. Proposition~\ref{prop:section-support-criterion}
therefore determines the extremal value and characterizes equality through
the corresponding contact sets $C_y$, defined in that proposition. If $C_{T(\zeta)}=\{T(\zeta)\}$ for almost every $\zeta\in\T$, then $T$ is
the unique normalized extremal.

Information about the ambient target beyond the section is essential. Example~\ref{ex:round-section-not-sufficient}
shows that the existence of a round affine section alone does not make its
M\"obius parametrization extremal. On the other hand, the supporting-hyperplane
condition concerns the entire target only along one boundary circle and does
not require global spherical symmetry: Example~\ref{ex:rotational-section-support}
gives a bounded convex non-ball target satisfying it. The condition is not
necessary either. For complex-line sections, the same pairing compares
derivatives of holomorphic disks, and
Theorem~\ref{thm:balanced-complex-lines} proves extremality on every central
complex line of a bounded balanced convex domain, including directions in
which the supporting-hyperplane condition fails. Finally,
Theorem~\ref{thm:circular-section-rigidity} shows that requiring every central real two-plane section to be a centered disk forces the target itself to be a Hilbert ball.

For the unit ball $\B_H=\{x\in H:|x|<1\}$, condition
\eqref{eq:intro-support-condition} is
the real Cauchy-Schwarz inequality.
Proposition~\ref{prop:section-support-criterion} yields
\begin{equation}\label{eq:intro-ball-estimate}
 \|dF_0\|\leq
 \frac{1-|p|^2}{\sqrt{1-|p|^2+|\pi_\Lambda p|^2}}.
\end{equation}
The denominator is the radius $c$ of $(p+\Lambda)\cap\B_H$; here $R=1$
is the radius of the target ball. Equality holds if and only if $F$ is a
conformal or anticonformal diffeomorphism onto this affine disk.
The estimate holds in every real Hilbert space of dimension at least two,
even for maps whose ranges are not contained in any finite-dimensional
subspace; see Example~\ref{ex:L2-ball}. Its finite-dimensional
specialization is stated in Corollary~\ref{cor:finite-dimensional-ball}.
Beyond the sharp inequality, Theorem~\ref{thm:ball-boundary-stability}
gives an identity that yields quantitative $L^2$ control of the boundary
distance from the normalized extremal. In particular, for fixed $p$ and
$\Lambda$, normalized maximizing sequences converge in $L^2$.

The M\"obius-weighted Hilbert identity also applies to nonconformal
differentials. We write $L_F(z)$ and $\ell_F(z)$ for the maximal and
minimal stretchings defined in Definition~\ref{def:pointwise-distortion}.
Under the pointwise condition
$0<\ell_F(z)\leq L_F(z)\leq K\ell_F(z)$, we obtain an infinitesimal
Cayley-Klein estimate without an alignment assumption.
Section~\ref{sec:ball-CK} introduces the dimension-free Klein norm and
proves the corresponding contraction estimate. A bound depending on $K$
and the relative position of $p$ and $\Lambda$ implies a uniform estimate
with factor $K$; if the pointwise condition holds throughout $\D$,
integration gives the corresponding distance inequality.

At the center of the ball we determine the optimal constant
\[
\begin{aligned}
 M_K=\sup\{L_F(0):{}&F:\D\to\B_H\text{ harmonic},\ F(0)=0,\\
 &0<\ell_F(0)\leq L_F(0)\leq K\ell_F(0)\}.
\end{aligned}
\]
The parametrization of the principal derivatives used here is related,
after the change of parameter \(\tau=\tan\theta\), to that in
\cite{Zwonek2022}. Under the additional constraint
\(L_F(0)/\ell_F(0)\leq K\), the optimizing parameter is unique, all
extremals are determined up to orthogonal transformations of $H$ and
rotations or reflections of $\D$, and $M_K$ is strictly increasing. More precisely,
\[
 M_1=1,\qquad
 1<M_K<\min\left\{\frac{2K}{K+1},\frac4\pi\right\}\quad(K>1),
\]
and
\[
 \lim_{K\to+\infty}M_K=\frac4\pi.
\]
Thus $M_K$ increases strictly from the conformal value $1$ at $K=1$ to
the unrestricted constant $4/\pi$ as $K\to+\infty$. The corresponding
normalized boundary functions converge to the boundary function of a
one-dimensional harmonic extremal. The unrestricted estimate itself is
proved directly by scalar projection and the Poisson derivative formula.

The planar background includes linear and variational extremal methods
\cite{DurenSchoberVariational,DurenSchober}, harmonic Schwarz-type estimates
and related extremal inequalities
\cite{Heinz,Colonna,DurenBook,KalajVuorinen,Mateljevic2018,
MateljevicSvetlik,Knezevic2025,KMS2025}, and descriptions of admissible
derivatives at the center \cite{KovalevYang,BrevigOrtegaSeip}. The
finite-dimensional ball theorem in \cite{ForstnericKalaj} identifies
affine disks as the equality cases. Our approach isolates the
M\"obius-weighted pairing as a target-independent Hilbert-space identity
before any estimate involving the target is imposed.

Forstneri\v c and Kalaj \cite{ForstnericKalajExtremal} classify the bounded
convex planar pointed domains for which a conformal parametrization is
extremal; their family includes noncircular domains. Their
higher-dimensional formulation fixes one tangent vector at the base point.
Here we prescribe the entire image plane of the conformal differential and
study how the ambient target geometry affects a round affine section.

Section~\ref{sec:preliminaries} fixes the notation and boundary
representation. Section~\ref{sec:weighted-support} proves the pairing
identity, and Section~\ref{sec:target-geometry} establishes the extremal
equivalence and its geometric applications. Section~\ref{sec:disk-local}
treats the disk-valued problem explicitly. Section~\ref{sec:ball-CK}
treats Hilbert balls and the Cayley-Klein estimates.
Section~\ref{sec:cylinder-aligned} treats cylinders and aligned distortion,
and Section~\ref{sec:center-distortion} solves the center problem.

\section{Preliminaries}\label{sec:preliminaries}

We write
\[
 \D=\{z\in\C: |z|<1\},\qquad \T=\partial\D,
\]
and denote by \(m\) normalized Lebesgue measure on \(\T\). Thus
\[
 \int_\T \psi\dm=\frac1{2\pi}\int_0^{2\pi}\psi(e^{it})\,dt.
\]
For \(\Phi\in L^1(\T)\) we use the Fourier convention
\begin{equation}\label{eq:fourier-convention}
 \widehat\Phi(n)=\int_\T \Phi(e^{it})e^{-int}\dm,
 \qquad n\in\mathbb Z.
\end{equation}

Let \(f=u+iv\) be a \(C^1\) complex-valued map in a plane domain. We use
\[
 f_z=\frac12(f_x-if_y),\qquad
 f_{\overline z}=\frac12(f_x+if_y).
\]
The operator norm and minimal stretching are given by the following
elementary formulas.

\begin{lemma}\label{lem:operator-norm}
For every \(C^1\) complex-valued map \(f\) and every point \(z\) in its domain,
\begin{equation}\label{eq:norm-identity}
 \|df_z\|=|f_z(z)|+|f_{\overline z}(z)|,
\end{equation}
where \(\|df_z\|\) is the operator norm of the real differential \(df_z:\R^2\to\R^2\).
Moreover, its minimal stretching is
\begin{equation}\label{eq:min-stretch-complex}
 \min_{|v|=1}|df_z(v)|
 =\bigl||f_z(z)|-|f_{\overline z}(z)|\bigr|.
\end{equation}
\end{lemma}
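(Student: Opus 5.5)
The plan is to write the real differential in complex form and reduce both formulas to the triangle inequality on the unit circle. Since \(f_x=f_z+f_{\overline z}\) and \(f_y=i(f_z-f_{\overline z})\), for a real vector \(v=v_1+iv_2\) (identified with a complex number) we get
\[
 df_z(v)=f_xv_1+f_yv_2=f_z(z)\,v+f_{\overline z}(z)\,\overline v .
\]
I would check this directly by expanding \(f_z v+f_{\overline z}\overline v\) with \(v=v_1+iv_2\): the coefficient of \(v_1\) is \(f_z+f_{\overline z}=f_x\), and the coefficient of \(v_2\) is \(i(f_z-f_{\overline z})=f_y\). Thus \(df_z\) is the real-linear map \(v\mapsto Av+B\overline v\) with \(A=f_z(z)\) and \(B=f_{\overline z}(z)\).

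For the operator norm, take \(|v|=1\) and write \(v=e^{i\theta}\). Then
\[
 |Av+B\overline v|=|Ae^{i\theta}+Be^{-i\theta}|=\bigl|A+Be^{-2i\theta}\bigr| .
\]
As \(\theta\) ranges over \([0,\pi)\), the factor \(e^{-2i\theta}\) covers the whole unit circle. So the set of values \(\{|A+Bw|:|w|=1\}\) is exactly the range of \(|df_z(v)|\) over unit vectors.

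By the triangle inequality, \(|A|-|B|\le|A+Bw|\le |A|+|B|\), and therefore \(\bigl||A|-|B|\bigr|\le|A+Bw|\le|A|+|B|\). Both bounds are attained.
\begin{itemize}
\item If \(A,B\neq0\), choose \(w=\frac{A/|A|}{B/|B|}\), which aligns the arguments of \(A\) and \(Bw\) and gives the maximum. Choosing \(-w\) gives \(\bigl||A|-|B|\bigr|\).
\item If \(A=0\) or \(B=0\), then \(|A+Bw|\) is constant, equal to \(|A|+|B|=\bigl||A|-|B|\bigr|\).
\end{itemize}
Taking the maximum yields \eqref{eq:norm-identity}, and taking the minimum yields \eqref{eq:min-stretch-complex}.

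There is no real obstacle here. The only points needing care are the sign conventions in the complex form of \(df_z\), and the observation that \(\theta\mapsto e^{-2i\theta}\) is surjective onto \(\T\), so the extremes over the unit circle are actually attained.
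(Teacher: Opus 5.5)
Your proposal is correct and is essentially the paper's argument. You write $df_z(e^{i\theta})=f_z(z)e^{i\theta}+f_{\overline z}(z)e^{-i\theta}$, apply the triangle and reverse triangle inequalities, and attain the two extremes by choosing $\theta$ so that the two terms have the same or opposite arguments; your reduction to $|A+Bw|$ with $w$ ranging over $\T$ is just a tidy way of phrasing that choice of $\theta$.
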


\begin{proof}
At the fixed point \(z\), for a unit direction \(e^{i\theta}\),
\[
 df_z(e^{i\theta})=f_z(z)e^{i\theta}+f_{\overline z}(z)e^{-i\theta}.
\]
The triangle inequality gives the upper bound in \eqref{eq:norm-identity}. If both terms are nonzero, we choose \(\theta\) so that they have the same argument; if one term vanishes, equality is immediate.
Choosing \(\theta\) so that the two terms have opposite arguments gives \eqref{eq:min-stretch-complex}; the reverse inequality is the reverse triangle inequality.
\end{proof}

For a complex-valued \(C^1\) map \(f\), we use the notation
\[
    |\nabla f(z)|
    =
    \bigl(|f_x(z)|^2+|f_y(z)|^2\bigr)^{1/2}.
\]
Equivalently,
\[
    \frac{|\nabla f(z)|}{\sqrt2}
    =
    \sqrt{|f_z(z)|^2+|f_{\overline z}(z)|^2}.
\]

We use the following pointwise stretching terminology for Hilbert-valued maps.

\begin{definition}\label{def:pointwise-distortion}
Let \(H\) be a real Hilbert space and let \(F\) be differentiable at a point \(z_0\) of a plane domain. The \emph{maximal stretching} and \emph{minimal stretching} of \(F\) at \(z_0\) are
\begin{equation}\label{eq:max-min-stretching}
 L_F(z_0)=\max_{|v|=1}|dF_{z_0}v|=\|dF_{z_0}\|,
 \qquad
 \ell_F(z_0)=\min_{|v|=1}|dF_{z_0}v|.
\end{equation}
A nonzero differential \(dF_{z_0}\) is conformal if
\[
 |dF_{z_0}v|=\lambda|v|\qquad(v\in\R^2)
\]
for some \(\lambda>0\), equivalently if \(L_F(z_0)=\ell_F(z_0)=\lambda\). For \(K\geq1\), we say that \(F\) is \emph{\(K\)-quasiconformal at \(z_0\) in the pointwise sense} if
\begin{equation}\label{eq:pointwise-qc-def}
 0<\ell_F(z_0)\leq L_F(z_0)\leq K\ell_F(z_0).
\end{equation}
Equivalently, \(dF_{z_0}\) satisfies the pointwise \(K\)-distortion bound. This is a condition only on the differential at \(z_0\); no quasiconformality, injectivity, or homeomorphism property is imposed in a neighborhood.
\end{definition}

For a complex-valued map, Lemma~\ref{lem:operator-norm} gives
\[
 L_f(z)=|f_z(z)|+|f_{\overline z}(z)|,
 \qquad
 \ell_f(z)=\bigl||f_z(z)|-|f_{\overline z}(z)|\bigr|.
\]
Thus Definition~\ref{def:pointwise-distortion} agrees, in the planar orientation-preserving case, with the usual pointwise quasiconformal dilatation. No global quasiconformality will be assumed below.

For \(U\in L^1(\T)\), we write its Poisson integral as
\[
 P[U](re^{i\theta})=\int_\T P_r(\theta-t)U(e^{it})\dm,
 \qquad
 P_r(s)=\frac{1-r^2}{1-2r\cos s+r^2}.
\]
For standard background on harmonic Poisson integrals, see, for example, \cite{AxlerBourdonRamey}.
Differentiation at the origin gives the formulas used below.

\begin{lemma}\label{lem:real-poisson}
Let \(u=P[U]\), where \(U\in L^1(\T)\) is real-valued. Then
\[
 u(0)=\int_\T U\dm,
\]
and
\begin{equation}\label{eq:real-poisson-grad}
 u_x(0)=2\int_\T U(e^{it})\cos t\dm,
 \qquad
 u_y(0)=2\int_\T U(e^{it})\sin t\dm.
\end{equation}
Consequently,
\begin{equation}\label{eq:grad-duality}
 |\nabla u(0)|=2\sup_{\theta\in\R}
 \left|\int_\T U(e^{it})\cos(t-\theta)\dm\right|.
\end{equation}
The absolute value may be omitted if the supremum is taken over all \(\theta\), since replacing \(\theta\) by \(\theta+\pi\) changes the sign.
\end{lemma}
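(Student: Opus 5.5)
The plan is to differentiate the Poisson kernel at the origin and pass the derivative under the integral, then read off the gradient norm by a Cauchy--Schwarz duality in \(\R^2\).

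First, setting \(r=0\) gives \(P_0\equiv1\), so \(u(0)=\int_\T U\dm\) directly.

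For the derivatives, I will not differentiate \(P_r(\theta-t)\) in polar form. Instead I will use the expansion
\[
 P_r(\theta-t)=\re\frac{e^{it}+z}{e^{it}-z},
\]
valid for \(|z|<1\) with \(z=re^{i\theta}\). Differentiating in \(z\) at \(z=0\) gives
\[
 \partial_z\frac{e^{it}+z}{e^{it}-z}\Big|_{z=0}=\frac{2}{e^{it}}=2e^{-it}.
\]
Hence, to first order,
\[
 P=1+\re(2e^{-it}z)+O(|z|^2)=1+2(x\cos t+y\sin t)+O(|z|^2).
\]
The error term is uniform in \(t\) for \(|z|\le 1/2\). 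Since \(U\in L^1\), the integral of \(U\) against this uniform \(O(|z|^2)\) remainder is \(O(|z|^2)\). This justifies differentiating under the integral at \(0\) and yields \eqref{eq:real-poisson-grad}. Equivalently, one can invoke dominated convergence on the difference quotients, since \(\partial_x P\) and \(\partial_y P\) are bounded uniformly in \(t\) for \(|z|\le1/2\).

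For \eqref{eq:grad-duality}, I write \(A=\int_\T U\cos t\dm\) and \(B=\int_\T U\sin t\dm\), so that \(\nabla u(0)=2(A,B)\). Then
\[
 \int_\T U\cos(t-\theta)\dm=A\cos\theta+B\sin\theta.
\]
By Cauchy--Schwarz in \(\R^2\), the supremum over \(\theta\) of the absolute value of this quantity is \(\sqrt{A^2+B^2}\). The supremum is attained when \((\cos\theta,\sin\theta)\) is parallel to \((A,B)\), or at any \(\theta\) if \(A=B=0\). Multiplying by \(2\) gives \eqref{eq:grad-duality}.

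The final remark follows because \(\cos(t-\theta-\pi)=-\cos(t-\theta)\). The set of values taken by the integral over all \(\theta\) is therefore symmetric under sign change, and its supremum equals the supremum of its absolute value.

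The only step needing care is justifying differentiation under the integral for merely integrable \(U\), and the uniform bound on the kernel near the origin handles this.
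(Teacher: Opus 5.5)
Your proposal is correct and follows essentially the same route as the paper. You expand the Poisson kernel to first order at the origin, uniformly in \(t\), so that the remainder integrated against \(U\in L^1\) is \(O(|z|^2)\), and then use the dual characterization of the Euclidean norm of the gradient. Writing the kernel as \(\re\frac{e^{it}+z}{e^{it}-z}\) is only a convenient way to derive the expansion \(1+2r\cos(\theta-t)+O(r^2)\) that the paper states directly.
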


\begin{proof}
The identity at the origin follows from \(P_0\equiv1\). The expansion
\[
 P_r(\theta-t)=1+2r\cos(\theta-t)+O(r^2)
\]
is uniform in \(t\) as \(r\to0\). Its remainder, integrated against
$U$, is bounded by $Cr^2\|U\|_{L^1}$ for a constant $C$ independent of
$U$ and small $r$. Differentiation at the origin therefore gives
\eqref{eq:real-poisson-grad}. Formula \eqref{eq:grad-duality} is the dual characterization of the Euclidean norm of the gradient.
\end{proof}

For a complex-valued boundary function, the value and the two complex
derivatives at the origin are determined by its first Fourier coefficients.

\begin{lemma}\label{lem:complex-poisson}
Let \(f=P[\Phi]\), where \(\Phi\in L^1(\T)\). Then
\begin{equation}\label{eq:complex-poisson-coeff}
 f(0)=\widehat\Phi(0),
 \qquad
 f_z(0)=\widehat\Phi(1),
 \qquad
 f_{\overline z}(0)=\widehat\Phi(-1).
\end{equation}
Moreover,
\[
 f_x(0)=2\int_\T\Phi(e^{it})\cos t\dm,
 \qquad
 f_y(0)=2\int_\T\Phi(e^{it})\sin t\dm.
\]
\end{lemma}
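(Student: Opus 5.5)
The plan is to reduce everything to Lemma~\ref{lem:real-poisson} by splitting \(\Phi\) into real and imaginary parts, and then translate the resulting cosine and sine integrals into Fourier coefficients.

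\textbf{Step 1 (split into real parts).} Write \(\Phi=U+iV\) with \(U,V\in L^1(\T)\) real-valued. Since the Poisson kernel is real, \(f=P[\Phi]=u+iv\) with \(u=P[U]\) and \(v=P[V]\).

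\textbf{Step 2 (value and partial derivatives).} Lemma~\ref{lem:real-poisson}, applied to \(u\) and \(v\) separately, gives \(f(0)=\int_\T\Phi\dm=\widehat\Phi(0)\). Adding the formulas \eqref{eq:real-poisson-grad} for \(u_x(0)\) and \(i\,v_x(0)\) gives
\[
 f_x(0)=2\int_\T\Phi(e^{it})\cos t\dm,
\]
and in the same way
\[
 f_y(0)=2\int_\T\Phi(e^{it})\sin t\dm.
\]
This is the second display of the lemma.

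\textbf{Step 3 (complex derivatives).} Insert these into the definitions of \(f_z\) and \(f_{\overline z}\):
\[
 f_z(0)=\tfrac12\bigl(f_x(0)-if_y(0)\bigr)=\int_\T\Phi(e^{it})(\cos t-i\sin t)\dm=\int_\T\Phi(e^{it})e^{-it}\dm=\widehat\Phi(1),
\]
\[
 f_{\overline z}(0)=\tfrac12\bigl(f_x(0)+if_y(0)\bigr)=\int_\T\Phi(e^{it})e^{it}\dm=\widehat\Phi(-1).
\]
Both identities use the Fourier convention \eqref{eq:fourier-convention}.

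The only point needing care is justifying the differentiation at the origin. It is already handled in the proof of Lemma~\ref{lem:real-poisson} through the uniform expansion of \(P_r\), and that argument applies verbatim to \(U\) and \(V\). There is no genuine obstacle. The one thing to check is the sign convention: \(\widehat\Phi(1)\) pairs \(\Phi\) with \(e^{-it}\), which corresponds to \(f_z\), consistent with \(f(re^{i\theta})\sim\widehat\Phi(0)+\widehat\Phi(1)z+\widehat\Phi(-1)\overline z+\cdots\).
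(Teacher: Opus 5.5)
Your proposal is correct and follows essentially the same route as the paper: apply Lemma~\ref{lem:real-poisson} to the real and imaginary parts of \(\Phi\), then combine the resulting formulas for \(f_x(0)\) and \(f_y(0)\) with the definitions of \(f_z\) and \(f_{\overline z}\). The sign check against the convention \eqref{eq:fourier-convention} is also done correctly.
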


\begin{proof}
We apply Lemma~\ref{lem:real-poisson} to the real and imaginary parts and combine the resulting formulas with the definitions of \(f_z\) and \(f_{\overline z}\). The identities in \eqref{eq:complex-poisson-coeff} agree with the convention \eqref{eq:fourier-convention}.
\end{proof}

We now pass to Hilbert-valued harmonic maps. A \(C^2\) map
\(F:\D\to H\), where \(H\) is a real Hilbert space, is called harmonic
if \(\Delta F=0\). Equivalently, the function
\(z\mapsto\langle F(z),h\rangle\) is harmonic for every \(h\in H\).
For \(\Phi\in L^1(\T,H)\), all integrals below are Bochner integrals,
and \(P[\Phi]\) denotes the Bochner Poisson integral. We use the following
boundary representation for bounded harmonic maps.

\begin{lemma}\label{lem:hilbert-boundary}
Let \(H\) be a real Hilbert space and let \(F:\D\to H\) be bounded and harmonic. Then there is a unique \(\Phi\in L^\infty(\T,H)\), up to equality almost everywhere, such that
\begin{equation}\label{eq:hilbert-poisson-representation}
 F=P[\Phi].
\end{equation}
Moreover,
\begin{equation}\label{eq:hilbert-boundary-norm}
 \|\Phi\|_{L^\infty(\T,H)}=\sup_{z\in\D}|F(z)|.
\end{equation}
If \(C\subset H\) is closed and \(F(\D)\subset C\), then
\begin{equation}\label{eq:hilbert-boundary-convex-target}
 \Phi(\zeta)\in C
 \qquad\text{for almost every }\zeta\in\T.
\end{equation}
\end{lemma}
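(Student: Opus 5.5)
We need the existence and uniqueness of the boundary function $\Phi$, the norm identity \eqref{eq:hilbert-boundary-norm}, and the inclusion \eqref{eq:hilbert-boundary-convex-target} for an arbitrary closed set $C$. The plan is to obtain $\Phi$ as an almost-everywhere radial limit. Then the inclusion follows directly, since limits of points of $C$ lie in $C$ when $C$ is closed.

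\emph{Step 1: reduction to a separable subspace.} Since $F$ is continuous and $\D$ is separable, the closed linear span $H_0$ of $F(\D)$ is a separable closed subspace. Fix an orthonormal basis $(h_k)$ of $H_0$. Each coordinate $u_k=\langle F,h_k\rangle$ is a bounded real harmonic function, with $|u_k|\le M:=\sup_{\D}|F|$.

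\emph{Step 2: existence of radial limits.} The key device is that $F$ is a bounded $H$-valued harmonic function, so $|F|^2$ is subharmonic. More usefully, for $0<r<1$ we use the Fourier expansion on circles:
\[
\int_\T|F(r\zeta)|^2\,dm=\sum_{n\in\mathbb Z} r^{2|n|}\,|c_n|^2\le M^2,
\]
where $F(re^{it})=\sum_n c_n r^{|n|}e^{int}$ with $c_n\in H_0\otimes\C$. Monotone convergence gives $\sum_n|c_n|^2\le M^2$. Therefore $\Phi:=\sum_n c_n e^{int}$ defines an element of $L^2(\T,H_0)$, and $F_r\to\Phi$ in $L^2$. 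Moreover $F=P[\Phi]$, because the Fourier coefficients of $P[\Phi]$ and $F$ agree at every radius; equivalently, this holds for each coordinate $\langle\cdot,h\rangle$ against $P[\langle\Phi,h\rangle]$ by the scalar theory.

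To get pointwise radial convergence, the step I expect to be the main obstacle, I would use the Hilbert-valued Fatou theorem. The Poisson maximal function satisfies $\sup_r|P[\Phi](r\zeta)-\Phi(\zeta)|\to0$ almost everywhere for $\Phi\in L^1(\T,H_0)$. This follows by the standard argument: the statement holds for $\Phi$ continuous (uniform convergence), the Hardy--Littlewood maximal inequality applies to $|\Phi|$, and continuous functions are dense in the Bochner space $L^1(\T,H_0)$. Since $H_0$ is separable, $\Phi$ is strongly measurable.

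\emph{Step 3: consequences.} Take $\zeta$ where $F(r\zeta)\to\Phi(\zeta)$. Then $|\Phi(\zeta)|\le M$, and $\Phi(\zeta)\in C$ whenever $F(\D)\subset C$ with $C$ closed; this gives \eqref{eq:hilbert-boundary-convex-target} with no convexity needed. Conversely, $|F(z)|\le\int P_z|\Phi|\,dm\le\|\Phi\|_\infty$, which yields the norm equality \eqref{eq:hilbert-boundary-norm}. Uniqueness follows because $P[\Psi]=0$ forces every scalar $\langle\Psi,h_k\rangle$ to vanish almost everywhere by scalar uniqueness. Since there are countably many $k$ and $\Psi$ takes values in a separable subspace, $\Psi=0$ almost everywhere.
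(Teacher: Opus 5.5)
Your argument is correct, but you reach $\Phi$ by a different route from the paper.

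\emph{The paper's existence proof} is non-constructive. The circle restrictions $F(r_j\,\cdot)$ lie in the weakly compact set $\{\Psi\in L^2(\T,H):|\Psi|\le M\text{ a.e.}\}$ of the reflexive space $L^2(\T,H)$. A weakly convergent subsequence is extracted by Eberlein--\v{S}mulian. Then $F=P[\Phi]$ follows by passing to the limit in the Poisson formula on the disks $|\zeta|<r_j$, and $|\Phi|\le M$ comes from weak closedness.

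\emph{Your existence proof} builds $\Phi$ explicitly from the coefficients $c_n$ through Parseval in $L^2(\T,H_0)$. This avoids weak compactness and gives $\|F(r\,\cdot)-\Phi\|_{L^2}\to0$ at once. The paper instead proves that convergence separately (approximate identity plus density) for the closed-set step. For that step it needs only almost-everywhere convergence along one sequence $r_j\to1^-$, obtained by choosing $\sum_j\|F(r_j\,\cdot)-\Phi\|_{L^2}^2<\infty$ and applying Tonelli.

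Your maximal-function argument gives full radial limits; this is the content of Remark~\ref{rem:hilbert-fatou}, and it works for Poisson integrals of Bochner-integrable functions in any Banach space. In your own setup, however, it is superfluous. Since you already have $F(r\,\cdot)\to\Phi$ in $L^2$, a subsequence converges almost everywhere, and that yields both $|\Phi|\le M$ and $\Phi\in C$ almost everywhere. So the step you flagged as the main obstacle can simply be dropped. As written, that display should read $\lim_{r\to1^-}|P[\Phi](r\zeta)-\Phi(\zeta)|=0$, not a supremum tending to zero.

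In short, the paper's route uses only reflexivity of $L^2(\T,H)$ for existence. Yours relies on the Hilbert structure through Parseval, but it is more concrete and delivers the $L^2$ convergence for free.

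One small repair is needed in the uniqueness step. A competing boundary function is not known a priori to take values in $H_0=\overline{\operatorname{span}}\,F(\D)$. So the countable family must be dense in a separable closed subspace containing the essential range of $\Psi$ (Pettis' theorem), not the basis $(h_k)$ of $H_0$ from Step 1. With that choice your uniqueness argument coincides with the paper's.
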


\begin{proof}
We set \(M=\sup_{\D}|F|\), choose a sequence \(r_j\to1^-\), and define
\[
 \Phi_j(e^{it})=F(r_j e^{it}).
\]
The functions \(\Phi_j\) belong to the set
\[
 \mathcal C_M=\{\Psi\in L^2(\T,H): |\Psi|\leq M\ \text{a.e.}\}.
\]
The set \(\mathcal C_M\) is norm closed and convex, hence weakly closed. Since \(L^2(\T,H)\) is reflexive and \(\mathcal C_M\) is bounded, it is weakly compact. By the Eberlein-\v{S}mulian theorem, after passing to a subsequence we may assume that
\[
 \Phi_j\rightharpoonup\Phi
 \qquad\text{weakly in }L^2(\T,H)
\]
for some \(\Phi\in\mathcal C_M\). In particular, \(\Phi\in L^\infty(\T,H)\) and \(\|\Phi\|_\infty\leq M\).

We fix \(z=\rho e^{i\theta}\in\D\). For all sufficiently large \(j\), the vector-valued Poisson formula on the disk \(|\zeta|<r_j\), obtained by testing against vectors in \(H\), gives
\[
 F(z)=\int_\T P_{\rho/r_j}(\theta-t)\Phi_j(e^{it})\dm.
\]
Since \(P_{\rho/r_j}(\theta-\cdot)\to P_\rho(\theta-\cdot)\) in \(L^1(\T)\) and \(|\Phi_j|\leq M\) almost everywhere,
\[
 \left|F(z)-\int_\T P_\rho(\theta-t)\Phi_j(e^{it})\dm\right|\longrightarrow0.
\]
The map
\[
 \Psi\longmapsto\int_\T P_\rho(\theta-t)\Psi(e^{it})\dm
\]
is a bounded linear operator from \(L^2(\T,H)\) to \(H\). Weak convergence therefore shows that the last integrals converge weakly to \(P[\Phi](z)\). They also converge in norm to \(F(z)\) by the preceding estimate. Hence \(F(z)=P[\Phi](z)\), proving \eqref{eq:hilbert-poisson-representation}.

For uniqueness, suppose that \(P[\Phi]=P[\Psi]\). Since Bochner-measurable functions are essentially separably valued, the essential ranges of \(\Phi\) and \(\Psi\) are contained in a separable closed subspace \(H_0\subset H\). We choose a countable dense subset \(\{h_k\}_{k\geq1}\) of \(H_0\). For every \(k\), the scalar Poisson integral of \(\langle\Phi-\Psi,h_k\rangle\) vanishes, and scalar uniqueness gives
\[
 \langle\Phi-\Psi,h_k\rangle=0
 \qquad\text{a.e. on }\T.
\]
Outside the union of the resulting null sets, \(\Phi-\Psi\in H_0\) is orthogonal to a dense subset of \(H_0\), and hence \(\Phi=\Psi\). Finally, the Poisson estimate gives \(\sup_\D|F|\leq\|\Phi\|_\infty\). Together with \(\|\Phi\|_\infty\leq M=\sup_\D|F|\), this proves \eqref{eq:hilbert-boundary-norm}.

To prove the last assertion, the Poisson approximate identity gives
\[
 \|F(r\,\cdot)-\Phi\|_{L^2(\T,H)}\longrightarrow0
 \qquad(r\to1^-).
\]
Indeed, this convergence is uniform for continuous $H$-valued functions on
$\T$ and extends to $L^2(\T,H)$ by density and the contraction property
of the Poisson operators. We choose $r_j\to1^-$ so that the sum of the
squared $L^2$ errors is finite. Tonelli's theorem then gives
$F(r_j\zeta)\to\Phi(\zeta)$ in $H$ for almost every $\zeta$.
Since each $F(r_j\zeta)$ belongs to the closed set $C$, so does the limit.
This proves \eqref{eq:hilbert-boundary-convex-target} without a convexity
assumption.
\end{proof}

The boundary representation in Lemma~\ref{lem:hilbert-boundary} is sufficient
for the arguments in this paper. A stronger radial-limit statement is also
available.

\begin{remark}\label{rem:hilbert-fatou}
Standard vector-valued Fatou theory gives an almost-everywhere radial-limit
statement for spaces with the Radon-Nikod\'ym property, in particular for
Hilbert spaces; see, for example, \cite{Blasco1988}.
\end{remark}

In particular, the boundary function of every bounded harmonic map
$F:\D\to G\subset H$ belongs to $\overline G$ almost everywhere.
The converse implication need not hold for a nonconvex $G$: the Poisson
integral of an arbitrary $\overline G$-valued function may leave $G$.
We therefore define admissible boundary functions through the harmonic maps
themselves. For the explicit extremals below, membership of the Poisson
integral in the open target is verified directly.

\section{The M\"obius-weighted Hilbert pairing}\label{sec:weighted-support}

We first isolate the algebraic identity used in the directional problem. For \(0\leq s<1\), let
\[
 \mathcal M_s(\zeta)=\frac{s+\zeta}{1+s\zeta},
 \qquad \zeta\in\overline\D,
\]
and, on \(\T\), set
\[
 w_s(z)=|1+sz|^2=1+s^2+2s\re z.
\]
Since \(|z|=1\),
\begin{equation}\label{eq:weighted-auto-general}
 w_s(z)\overline{\mathcal M_s(z)}=z^{-1}+2s+s^2z.
\end{equation}
Thus the weight clears the M\"obius denominator. After integration against a
boundary function, the three terms recover its mean and the Fourier
coefficients that represent the differential at the origin. The planar disk
specialization will be recorded separately in Section~\ref{sec:disk-local}.

We use the following coordinate convention. Let \(H\) be a real Hilbert
space with real-valued inner product \(\langle\cdot,\cdot\rangle\), and let
\(\Lambda\subset H\) be an oriented real two-dimensional subspace with an
oriented orthonormal basis \(e_1,e_2\). We identify
\[
 x_1e_1+x_2e_2\in\Lambda
 \quad\text{with}\quad
 x_1+ix_2\in\C.
\]
Thus complex notation is used only for coordinates in \(\Lambda\), while
every Hilbert-space inner product below is real. If a complex Hilbert
space occurs, we use its underlying real structure and take the real part
of its complex inner product. The differential norm, conformality, and all
inequalities below are invariant under postcomposition by an orthogonal
isometry of \(H\), so this choice of coordinates does not affect any of
the quantities under consideration.

The parameters below are adapted to a round affine disk. Suppose that
\[
 p=p_\Lambda+p_\perp,\qquad
 p_\Lambda=ae_1,\qquad p_\perp\perp\Lambda,
\]
with \(a\geq0\). If a disk in the affine plane \(p_\perp+\Lambda\) is centered
at \(p_\perp\), has radius \(c>a\), and contains \(p\), then \(s=a/c\) is the
M\"obius parameter that sends the origin to the prescribed point. The identity
proved below does not require a target or a round-section hypothesis; hence we
first treat \(c>0\) and \(0\leq s<1\) as free parameters.

Let \(p_\perp\in\Lambda^\perp\), set \(b=|p_\perp|\), and let \(c>0\) and
\(0\leq s<1\). We define
\begin{equation}\label{eq:weighted-E-def}
 E_s^c(\zeta)=p_\perp
 +c\re\mathcal M_s(\zeta)e_1
 +c\operatorname{Im}\mathcal M_s(\zeta)e_2,
 \qquad \zeta\in\overline\D.
\end{equation}
For \(\Phi=(\xi,\eta)\in L^1(\T,H)\), with \(\xi\in L^1(\T,\Lambda)\) and \(\eta\in L^1(\T,\Lambda^\perp)\), write \(\phi=\xi_1+i\xi_2\) under the above identification of \(\Lambda\) with \(\C\). Set
\begin{equation}\label{eq:weighted-total-integral}
 I_s^c(\Phi)
 =\int_\T w_s(z)\langle\Phi(z),E_s^c(z)\rangle\dm.
\end{equation}
This is the \(L^1\)-\(L^\infty\) duality pairing of \(\Phi\) with the
weighted boundary field \(w_sE_s^c\). We call it the
\emph{M\"obius-weighted Hilbert pairing}. The definition involves no target.
All terms are well defined: orthogonal projections preserve Bochner
integrability, and on $\T$ one has
\[
 (1-s)^2\leq w_s\leq(1+s)^2,
 \qquad |E_s^c|=\sqrt{c^2+b^2}.
\]
Consequently,
\[
 |w_s\langle\Phi,E_s^c\rangle|
 \leq(1+s)^2\sqrt{c^2+b^2}\,|\Phi|\in L^1(\T).
\]
We also use $\int_\T w_s\dm=1+s^2$.

The pairing has the following Fourier representation.

\begin{theorem}\label{thm:weighted-pairing-identity}
Assume that \(\Phi=(\xi,\eta)\in L^1(\T,H)\) satisfies
\begin{equation}\label{eq:general-fourier-plane}
 \widehat\phi(0)=a\in\R,
 \qquad
 \widehat\phi(1)=A\in\C,
 \qquad
 \widehat\phi(-1)=B\in\C,
\end{equation}
and
\begin{equation}\label{eq:general-orthogonal-identities}
 \int_\T\eta\dm=p_\perp,
 \qquad
 \int_\T\eta(e^{it})\cos t\dm=0.
\end{equation}
Then
\begin{equation}\label{eq:weighted-pairing-formula}
 I_s^c(\Phi)
 =c\re\bigl(A+2sa+s^2B\bigr)+(1+s^2)b^2.
\end{equation}
\end{theorem}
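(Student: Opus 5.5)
Split the pairing into the $\Lambda$-part and the $\Lambda^\perp$-part, since $E_s^c(\zeta)=p_\perp+c\,\mathcal M_s(\zeta)$. Here $p_\perp\in\Lambda^\perp$, and $c\,\mathcal M_s(\zeta)$ is read as a vector in $\Lambda$ through the identification $x_1e_1+x_2e_2\leftrightarrow x_1+ix_2$. Because $\Phi=(\xi,\eta)$ with $\xi\in\Lambda$ and $\eta\in\Lambda^\perp$, the real inner product decomposes pointwise as
\[
 \langle\Phi(z),E_s^c(z)\rangle
 =c\,\re\bigl(\phi(z)\overline{\mathcal M_s(z)}\bigr)
 +\langle\eta(z),p_\perp\rangle .
\]
This uses the fact that the real inner product of two vectors of $\Lambda$ equals $\re(\phi\bar\psi)$ in the complex coordinates. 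Integrability of each term was already checked before the statement, so
\[
 I_s^c(\Phi)=c\,\re\int_\T w_s\,\phi\,\overline{\mathcal M_s}\dm
 +\int_\T w_s\langle\eta,p_\perp\rangle\dm .
\]

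For the first term, apply \eqref{eq:weighted-auto-general}, which gives $w_s\overline{\mathcal M_s}=z^{-1}+2s+s^2z$ on $\T$. With the convention \eqref{eq:fourier-convention}, $\int_\T\phi\,z^{-1}\dm=\widehat\phi(1)=A$, $\int_\T\phi\dm=\widehat\phi(0)=a$ and $\int_\T\phi\,z\dm=\widehat\phi(-1)=B$. The first term is therefore $c\,\re(A+2sa+s^2B)$, where $a$ is real by \eqref{eq:general-fourier-plane}.

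For the second term, write $w_s(e^{it})=1+s^2+2s\cos t$. Since $p_\perp$ is a fixed vector, the continuous linear functional $\langle\cdot,p_\perp\rangle$ commutes with Bochner integration. This gives
\[
 \int_\T w_s\langle\eta,p_\perp\rangle\dm
 =(1+s^2)\Bigl\langle\int_\T\eta\dm,p_\perp\Bigr\rangle
 +2s\Bigl\langle\int_\T\eta(e^{it})\cos t\dm,p_\perp\Bigr\rangle .
\]
By \eqref{eq:general-orthogonal-identities} this equals $(1+s^2)|p_\perp|^2=(1+s^2)b^2$. Adding the two terms yields \eqref{eq:weighted-pairing-formula}.

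The only step needing care is the first term: the pointwise reduction of the real Hilbert inner product on $\Lambda$ to $\re(\phi\overline{\mathcal M_s})$, together with matching the Fourier indices correctly. The factor $\overline z=z^{-1}$ picks out $\widehat\phi(1)$, not $\widehat\phi(-1)$. Once the sign conventions are fixed, everything else is linear bookkeeping. Note that the sine coefficient of $\eta$ never enters, because the weight contains only $\cos t$; this is why the hypotheses on $\eta$ involve only the mean and the cosine coefficient.
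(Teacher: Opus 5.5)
Your proposal is correct and follows the paper's proof essentially step for step. You split the pairing into its $\Lambda$- and $\Lambda^\perp$-contributions, use \eqref{eq:weighted-auto-general} to extract $\widehat\phi(1)$, $\widehat\phi(0)$ and $\widehat\phi(-1)$, and expand $w_s(e^{it})=1+s^2+2s\cos t$ so that the normal term reduces to $(1+s^2)b^2$.
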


\begin{proof}
The orthogonal decomposition \(H=\Lambda\oplus\Lambda^\perp\) gives
\[
 I_s^c(\Phi)=I_\Lambda+I_\perp,
\]
where the two terms are the contributions of \(\xi\) and \(\eta\). By \eqref{eq:weighted-auto-general},
\[
\begin{aligned}
 I_\Lambda
 &=\int_\T w_s(z)\langle\xi(z),c\mathcal M_s(z)\rangle\dm\\
 &=c\re\int_\T\phi(z)\bigl(z^{-1}+2s+s^2z\bigr)\dm\\
 &=c\re\bigl(\widehat\phi(1)+2s\widehat\phi(0)+s^2\widehat\phi(-1)\bigr)\\
 &=c\re\bigl(A+2sa+s^2B\bigr).
\end{aligned}
\]
For the normal component, using \(w_s(e^{it})=1+s^2+2s\cos t\), we obtain
\[
\begin{aligned}
 I_\perp
 &=\int_\T w_s(z)\langle\eta(z),p_\perp\rangle\dm\\
 &=(1+s^2)\left\langle\int_\T\eta\dm,p_\perp\right\rangle
   +2s\left\langle\int_\T\eta(e^{it})\cos t\dm,p_\perp\right\rangle\\
 &=(1+s^2)|p_\perp|^2.
\end{aligned}
\]
Combining the two contributions proves \eqref{eq:weighted-pairing-formula}.
\end{proof}

A one-sided bound for the weighted pairing immediately gives a bound for
the conformal factor.

\begin{corollary}\label{cor:pairing-estimate-general}
In the setting of Theorem~\ref{thm:weighted-pairing-identity}, suppose that,
for some \(S\in\R\),
\begin{equation}\label{eq:general-pairing-estimate}
 I_s^c(\Phi)\leq S.
\end{equation}
Then
\begin{equation}\label{eq:pairing-estimate-identity}
 c\re\bigl(A+2sa+s^2B\bigr)+(1+s^2)b^2
 \leq S.
\end{equation}
In particular, if \(A=\lambda>0\) and \(B=0\), then
\begin{equation}\label{eq:general-conformal-bound}
 \lambda\leq
 \frac{S-(1+s^2)b^2-2csa}{c}.
\end{equation}
\end{corollary}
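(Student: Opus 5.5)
The corollary follows directly from Theorem~\ref{thm:weighted-pairing-identity}, so the plan is to substitute the identity \eqref{eq:weighted-pairing-formula} into the assumed bound and then specialize.

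First, since \(\Phi\) satisfies the hypotheses \eqref{eq:general-fourier-plane} and \eqref{eq:general-orthogonal-identities}, Theorem~\ref{thm:weighted-pairing-identity} gives the exact value
\[
 I_s^c(\Phi)=c\re\bigl(A+2sa+s^2B\bigr)+(1+s^2)b^2 .
\]
Inserting this equality into \eqref{eq:general-pairing-estimate} yields \eqref{eq:pairing-estimate-identity} at once. No estimate is lost here, because the theorem is an identity rather than an inequality.

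For the particular case, we set \(A=\lambda>0\) and \(B=0\). Since \(a\) and \(s\) are real, \(\re(A+2sa+s^2B)=\lambda+2sa\). The inequality \eqref{eq:pairing-estimate-identity} therefore reads
\[
 c\lambda+2csa+(1+s^2)b^2\leq S.
\]
We subtract \(2csa+(1+s^2)b^2\) from both sides and divide by \(c>0\). The direction of the inequality is preserved, and we obtain \eqref{eq:general-conformal-bound}.

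There is no genuine obstacle in this argument. The only points to check are that the real part is taken correctly when \(B=0\), and that the positivity of \(c\) justifies the division.
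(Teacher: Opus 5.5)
Your proposal is correct and matches the paper's proof: substitute the identity from Theorem~\ref{thm:weighted-pairing-identity} into the assumed bound, then set $A=\lambda$, $B=0$ and solve for $\lambda$ by dividing by $c>0$. Your observation that $a$ and $s$ are real, so the real part reduces to $\lambda+2sa$, is the only detail the paper leaves implicit.
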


\begin{proof}
Substituting \eqref{eq:weighted-pairing-formula} into
\eqref{eq:general-pairing-estimate} gives \eqref{eq:pairing-estimate-identity}.
If $A=\lambda>0$ and $B=0$, solving the resulting inequality for $\lambda$
gives \eqref{eq:general-conformal-bound}.
\end{proof}

\section{Directional extremals for general targets}
\label{sec:target-geometry}

We apply the identity to harmonic maps with a prescribed value and
a prescribed image plane for their conformal differential.

\subsection{Round sections and the M\"obius extremality criterion}

We consider a bounded domain \(G\) in a real Hilbert space \(H\), a point
\(p\in G\), and a real two-dimensional linear subspace \(\Lambda\subset H\). We
write \(\Pi=p+\Lambda\) for the corresponding affine plane. This notation distinguishes the linear subspace \(\Lambda\) from the
affine plane \(\Pi\).

\begin{definition}\label{def:directional-extremal}
Let \(\mathcal F_G(p,\Lambda)\) be the family of harmonic maps
\(F:\D\to G\) such that \(F(0)=p\), \(dF_0\neq0\) is conformal, and
\[
 dF_0(\R^2)=\Lambda.
\]
The \emph{directional extremal value} of \(G\) at \(p\) in the direction
\(\Lambda\) is \begin{equation}\label{eq:directional-extremal}
 M_G(p,\Lambda)
 =\sup\bigl\{\|dF_0\|:F\in\mathcal F_G(p,\Lambda)\bigr\}.
\end{equation}
\end{definition}

Whenever \(F^0\in\mathcal F_G(p,\Lambda)\) attains the supremum in
\eqref{eq:directional-extremal}, we call \(F^0\) a \emph{directional extremal
harmonic disk} for \((G,p,\Lambda)\). No attainment is assumed in the
definition of \(M_G(p,\Lambda)\).

The family in Definition~\ref{def:directional-extremal} is nonempty: a
sufficiently small affine conformal disk centered at \(p\) lies in \(G\).
The supremum is finite because \(G\) is bounded, Lemma~\ref{lem:hilbert-boundary}
provides an essentially bounded boundary function, and the first-derivative
formulas in Lemma~\ref{lem:real-poisson} apply to its scalar projections. The
image plane \(\Lambda\) remains prescribed in every dimension, and
Definition~\ref{def:directional-extremal} allows maps in
$\mathcal F_G(p,\Lambda)$ to leave the affine plane while remaining in the
ambient target.

Suppose that $G\cap\Pi$ is a Euclidean disk with center $q_0$.
Let $p_\perp$ be the point of $\Pi$ nearest the origin, and put
$t=p_\perp-q_0\in\Lambda$. Translation by $t$ leaves $\Pi$ unchanged.
The correspondence $F\mapsto F+t$ preserves the differential and gives
\begin{equation}\label{eq:intro-translation-invariance}
 M_{G+t}(p+t,\Lambda)=M_G(p,\Lambda).
\end{equation}
The translated section is centered at $q_0+t=p_\perp$. Relabelling the
translated target and prescribed point as $G$ and $p$, we use this
normalization throughout the round-section problem.
Decompose
\begin{equation}\label{eq:directional-decomposition}
 p=p_\Lambda+p_\perp,
 \qquad p_\Lambda\in\Lambda,
 \qquad p_\perp\perp\Lambda.
\end{equation}
Then \(\Pi=p_\perp+\Lambda\), and \(p_\perp\) is the point of \(\Pi\) nearest
the origin. We set \(a=|p_\Lambda|\) and \(b=|p_\perp|\), and assume that, for
some \(c>0\),
\begin{equation}\label{eq:round-affine-section}
 G\cap\Pi
 =D_\Pi(p_\perp,c)
 :=\{p_\perp+v:v\in\Lambda,\ |v|<c\}.
\end{equation}
Since \(p\in G\), one has \(a<c\). We choose an orthonormal basis
\(e_1,e_2\) of \(\Lambda\) with \(p_\Lambda=ae_1\), taking \(e_1\)
arbitrarily when \(a=0\), and set
\begin{equation}\label{eq:directional-s}
 s=\frac ac.
\end{equation}
We also write
\begin{equation}\label{eq:intro-round-parameters}
 R^2=c^2+b^2.
\end{equation}
For $y=p_\perp+cu$ with $u\in\Lambda$ and $|u|=1$, orthogonality
gives $|y|^2=b^2+c^2=R^2$. Thus $c$ is the radius of the affine disk,
and $R$ is the common norm of its boundary points. No containment
$G\subset\B_H(0,R)$ is assumed.
The function \(E_s^c\) defined in \eqref{eq:weighted-E-def} parametrizes
the disk in \eqref{eq:round-affine-section} conformally and satisfies
\(E_s^c(0)=p\). The domain \(G\), the point \(p\), and the plane \(\Lambda\)
determine every parameter: the section fixes \(c\), the orthogonal
decomposition of \(p\) fixes \(a\) and \(b\), and \(s=a/c\). Once an
orientation of \(\Lambda\)
has been chosen, all orientation-preserving conformal diffeomorphisms onto the
section with value \(p\) at the origin are
\begin{equation}\label{eq:mobius-rotation-family}
 T_\theta(z)=E_s^c(e^{i\theta}z),
 \qquad \theta\in\R.
\end{equation}
Indeed, after subtracting \(p_\perp\) and dividing by \(c\), such a
parametrization is an automorphism of \(\D\) with value \(s\) at the origin,
and these automorphisms are precisely
\(\mathcal M_s(e^{i\theta}z)\), \(\theta\in\R\).
Precomposition with a reflection of $\D$ gives the orientation-reversing
parametrizations. For each map in \(\mathcal F_G(p,\Lambda)\), put
\(\lambda=\|dF_0\|>0\). Since \(dF_0/\lambda\) is an isometry from
\(\R^2\) onto \(\Lambda\), one can choose a unit vector in \(\R^2\) whose image
is \(e_1\). A rotation sends the positive \(x\)-axis to this vector; if the
image of the resulting positive \(y\)-axis is \(-e_2\), reflection across the
new \(x\)-axis fixes the first equality and changes the second sign. We thereby
obtain a normalized representative with
\begin{equation}\label{eq:directional-normalization}
 F_x(0)=\lambda e_1,
 \qquad F_y(0)=\lambda e_2,
 \qquad \lambda=\|dF_0\|>0.
\end{equation}
Let \(\mathcal F_G^{\mathrm n}(p,\Lambda)\) denote the members of
\(\mathcal F_G(p,\Lambda)\) satisfying \eqref{eq:directional-normalization}.
If \(\mathcal R\) is the rotation or reflection used above, then
\[
 d(F\circ\mathcal R)_0=dF_0\circ\mathcal R.
\]
Hence precomposition by \(\mathcal R\) preserves harmonicity, the value at the
origin, the target, conformality, the image plane of the differential, and its
operator norm. Every admissible map therefore has a normalized representative,
and
\begin{equation}\label{eq:directional-normalized-equivalence}
 M_G(p,\Lambda)
 =\sup_{F\in\mathcal F_G^{\mathrm n}(p,\Lambda)}\|dF_0\|.
\end{equation}
Let
\begin{equation}\label{eq:directional-boundary-class}
 \mathcal B_G^{\mathrm n}(p,\Lambda)
 =\{\Phi_F:F\in\mathcal F_G^{\mathrm n}(p,\Lambda)\},
\end{equation}
where \(\Phi_F\) is the unique Poisson boundary function of \(F\);
equivalently, it is the radial boundary function almost everywhere. Define
\begin{equation}\label{eq:directional-sigma}
 \sigma_G(p,\Lambda)
 =\sup_{\Phi\in\mathcal B_G^{\mathrm n}(p,\Lambda)}I_s^c(\Phi).
\end{equation}
No absolute value is taken in \eqref{eq:directional-sigma}. The following
equivalence theorem is the analytic reduction for the prescribed conformal
problem: on the normalized family, the pairing is an affine function of
the differential norm with positive slope.

\begin{theorem}
\label{thm:directional-paired-equivalence}
Under assumption \eqref{eq:round-affine-section}, the derivative extremal
problem \eqref{eq:directional-extremal} and the boundary-pairing problem
\eqref{eq:directional-sigma} are equivalent. For every
\(F=P[\Phi_F]\in\mathcal F_G^{\mathrm n}(p,\Lambda)\),
\begin{equation}\label{eq:directional-pairing-value}
 I_s^c(\Phi_F)
 =c\|dF_0\|+2csa+(1+s^2)b^2.
\end{equation}
Consequently,
\begin{equation}\label{eq:directional-primal-paired}
 \sigma_G(p,\Lambda)
 =cM_G(p,\Lambda)+2csa+(1+s^2)b^2.
\end{equation}
The two problems have the same ordering within the normalized class. In
particular, maximizing sequences correspond, and a member of the normalized
class attains one supremum if and only if it attains the other.
\end{theorem}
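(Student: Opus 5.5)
The plan is to apply Theorem~\ref{thm:weighted-pairing-identity} to the boundary function \(\Phi=\Phi_F\) of a normalized map \(F\in\mathcal F_G^{\mathrm n}(p,\Lambda)\), so the work reduces to checking its Fourier hypotheses.

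Since \(G\) is bounded, Lemma~\ref{lem:hilbert-boundary} gives \(F=P[\Phi_F]\) with \(\Phi_F\in L^\infty(\T,H)\subset L^1(\T,H)\). Write \(\Phi_F=(\xi,\eta)\) with respect to \(H=\Lambda\oplus\Lambda^\perp\), and \(\phi=\xi_1+i\xi_2\) in the coordinates given by \(e_1,e_2\). Orthogonal projections commute with Bochner integrals and Poisson integrals. Hence \(\pi_\Lambda F=P[\xi]\), whose complex form is \(f=P[\phi]\), and \((I-\pi_\Lambda)F=P[\eta]\).

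The value condition \(F(0)=p=ae_1+p_\perp\) and Lemma~\ref{lem:complex-poisson} give \(\widehat\phi(0)=a\in\R\) and \(\int_\T\eta\dm=p_\perp\). The normalization \eqref{eq:directional-normalization} says \(F_x(0)=\lambda e_1\) and \(F_y(0)=\lambda e_2\), both lying in \(\Lambda\). Thus the \(\Lambda^\perp\)-components of \(F_x(0)\) and \(F_y(0)\) vanish. The vector-valued version of the derivative formula in Lemma~\ref{lem:real-poisson} (obtained by testing against \(h\in\Lambda^\perp\), or directly via Bochner integrals) then yields
\[
\int_\T\eta(e^{it})\cos t\dm=0,
\]
and also the sine analogue, which is not needed. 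In complex coordinates, \(f_x(0)=\lambda\) and \(f_y(0)=i\lambda\), so
\[
A=\widehat\phi(1)=f_z(0)=\tfrac12(\lambda-i\cdot i\lambda)=\lambda,
\qquad
B=\widehat\phi(-1)=f_{\overline z}(0)=\tfrac12(\lambda+i\cdot i\lambda)=0.
\]

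Substituting into \eqref{eq:weighted-pairing-formula} gives
\[
I_s^c(\Phi_F)=c(\lambda+2sa)+(1+s^2)b^2,
\]
with \(\lambda=\|dF_0\|\). This is \eqref{eq:directional-pairing-value}. The round-section hypothesis enters only in fixing \(c,s\) and the parametrization \(E_s^c\); the identity itself is target-free.

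For the consequences, the map \(\lambda\mapsto c\lambda+2csa+(1+s^2)b^2\) is strictly increasing and affine because \(c>0\), and its constant term depends only on \((G,p,\Lambda)\). Taking suprema over the normalized class, and using \eqref{eq:directional-normalized-equivalence}, gives \eqref{eq:directional-primal-paired}; this holds even though no attainment is assumed, since an affine increasing map commutes with suprema. Order preservation, the correspondence of maximizing sequences, and equality of attainers follow directly from the strict monotonicity.

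The only delicate point is transferring the scalar Poisson derivative formulas to the \(\Lambda^\perp\)-valued component in possibly infinite dimensions. I would handle this by pairing with arbitrary \(h\in\Lambda^\perp\) and using that the Bochner integral commutes with bounded functionals: a vector whose pairing with every \(h\) vanishes is zero.
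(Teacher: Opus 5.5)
Your proposal is correct and follows essentially the same route as the paper. You verify the Fourier hypotheses of Theorem~\ref{thm:weighted-pairing-identity} (namely \(\widehat\phi(0)=a\), \(\widehat\phi(1)=\lambda\), \(\widehat\phi(-1)=0\), normal mean \(p_\perp\) and vanishing cosine moment of \(\eta\)), substitute, and then pass to suprema using \(c>0\). Your test-vector argument for the \(\Lambda^\perp\)-valued derivative formula only spells out a step the paper states without comment, and your strict-monotonicity argument is equivalent to the paper's subtraction \(\sigma_G(p,\Lambda)-I_s^c(\Phi_F)=c\bigl(M_G(p,\Lambda)-\|dF_0\|\bigr)\).
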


\begin{proof}
We take \(F=P[\Phi_F]\in\mathcal F_G^{\mathrm n}(p,\Lambda)\) and write
\(\Phi_F=(\xi,\eta)\) according to
\(H=\Lambda\oplus\Lambda^\perp\). Under the identification
\(e_1\leftrightarrow1\), \(e_2\leftrightarrow i\), we denote the complex
coordinate of \(\xi\) by \(\phi\). The value condition and
\eqref{eq:directional-normalization} give
\[
 \widehat\phi(0)=a,
 \qquad \widehat\phi(1)=\lambda,
 \qquad \widehat\phi(-1)=0.
\]
The normal component has mean \(p_\perp\), while
\(F_x(0)\in\Lambda\) gives
\[
 \int_\T\eta(e^{it})\cos t\dm=0.
\]
Theorem~\ref{thm:weighted-pairing-identity} therefore yields
\eqref{eq:directional-pairing-value}.
After taking the supremum over the normalized family and using
\eqref{eq:directional-normalized-equivalence}, we obtain
\eqref{eq:directional-primal-paired}. Subtracting
\eqref{eq:directional-pairing-value} gives
\[
 \sigma_G(p,\Lambda)-I_s^c(\Phi_F)
 =c\bigl(M_G(p,\Lambda)-\|dF_0\|\bigr).
\]
Since $c>0$, one difference vanishes exactly when the other does, and
convergence of either difference to zero along a sequence is equivalent
to convergence of the other. This proves the assertions about extremals
and maximizing sequences.
\end{proof}

The calculation uses only the value and differential conditions.
The round-section hypothesis makes $E_s^c$ admissible and gives the
parameters their geometric meaning; the image of $F$ may leave the section.

Applying the preceding equivalence to the M\"obius parametrization gives a
difference formula and a necessary and sufficient condition for its
extremality.

\begin{corollary}
\label{cor:mobius-extremality}
Assume \eqref{eq:round-affine-section} and retain the normalization above. Let
\[
 T=E_s^c:\D\longrightarrow D_\Pi(p_\perp,c),
 \qquad \Phi_T=T|_{\T},
 \qquad R^2=c^2+b^2.
\]
Then, for every \(F=P[\Phi_F]\in\mathcal F_G^{\mathrm n}(p,\Lambda)\),
\begin{equation}\label{eq:mobius-exact-difference}
 I_s^c(\Phi_F)-(1+s^2)R^2
 =c\bigl(\|dF_0\|-\|dT_0\|\bigr).
\end{equation}
Consequently,
\begin{equation}\label{eq:mobius-iff}
 \|dF_0\|\leq\|dT_0\|
 \quad\Longleftrightarrow\quad
 I_s^c(\Phi_F)\leq(1+s^2)R^2.
\end{equation}
In particular, the following conditions are equivalent:
\begin{enumerate}
 \item[(i)] \(T\) is a directional extremal harmonic disk for
 \((G,p,\Lambda)\);
 \item[(ii)] \(\sigma_G(p,\Lambda)=(1+s^2)R^2\);
 \item[(iii)]
 \(I_s^c(\Phi_F)\leq(1+s^2)R^2\) for every
 \(F\in\mathcal F_G^{\mathrm n}(p,\Lambda)\).
\end{enumerate}
\end{corollary}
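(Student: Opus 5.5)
The plan is to apply Theorem~\ref{thm:directional-paired-equivalence} twice, once to \(F\) and once to \(T\), and subtract. The only preparatory work is to check that \(T=E_s^c\) belongs to \(\mathcal F_G^{\mathrm n}(p,\Lambda)\). We also need the direct values of \(I_s^c(\Phi_T)\) and \(\|dT_0\|\).

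\emph{Admissibility of \(T\).} The map \(T\) is the real-affine image of the holomorphic map \(\mathcal M_s\), so it is harmonic. It maps \(\D\) onto \(D_\Pi(p_\perp,c)=G\cap\Pi\subset G\) by \eqref{eq:round-affine-section}, and \(T(0)=p_\perp+cs\,e_1=p_\perp+ae_1=p\). Since \(\mathcal M_s'(0)=1-s^2>0\) and \(\mathcal M_s\) is holomorphic, the identification \(e_1\leftrightarrow1\), \(e_2\leftrightarrow i\) gives
\[
T_x(0)=c(1-s^2)e_1,\qquad T_y(0)=c(1-s^2)e_2 .
\]
Hence \(dT_0\) is conformal with image \(\Lambda\) and \(\|dT_0\|=c(1-s^2)\), so \(T\) satisfies \eqref{eq:directional-normalization}. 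Because \(T\) extends continuously to \(\overline\D\), uniqueness in Lemma~\ref{lem:hilbert-boundary} gives \(\Phi_T=T|_\T\).

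\emph{Computing \(I_s^c(\Phi_T)\).} The definition gives
\[
I_s^c(\Phi_T)=\int_\T w_s|E_s^c|^2\dm=R^2\int_\T w_s\dm=(1+s^2)R^2,
\]
using \(|E_s^c|=R\) on \(\T\) and \(\int_\T w_s\dm=1+s^2\). As a consistency check, \eqref{eq:directional-pairing-value} applied to \(T\) gives
\[
c\cdot c(1-s^2)+2c^2s^2+(1+s^2)b^2=(1+s^2)(c^2+b^2),
\]
which agrees.

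\emph{Deriving the statements.} Subtracting \eqref{eq:directional-pairing-value} for \(T\) from the same identity for \(F\) gives \eqref{eq:mobius-exact-difference}. Since \(c>0\), this yields \eqref{eq:mobius-iff}.

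The equivalence (i)\(\Leftrightarrow\)(iii) follows from \eqref{eq:mobius-iff} together with \eqref{eq:directional-normalized-equivalence}. Here \(T\) is extremal exactly when \(\|dF_0\|\le\|dT_0\|\) on the normalized class. This uses that \(T\in\mathcal F_G(p,\Lambda)\), and that every admissible map has a normalized representative with the same norm.

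For (ii), first note that \(T\) lies in the normalized class, so \(\sigma_G(p,\Lambda)\ge(1+s^2)R^2\) always holds. Therefore (ii) is equivalent to the upper bound in (iii).

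There is no real obstacle in this argument. The only points needing care are the admissibility of \(T\), in particular that \(T(\D)\subset G\) comes from the round-section hypothesis, and the identification of its boundary function.
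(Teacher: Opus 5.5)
Your proposal is correct and follows essentially the same route as the paper. Like the paper, you apply the identity of Theorem~\ref{thm:directional-paired-equivalence} to both \(F\) and \(T\), compute \(I_s^c(\Phi_T)=(1+s^2)R^2\) directly from \(|\Phi_T|=R\) and \(\int_\T w_s\dm=1+s^2\), subtract, and use \(T\in\mathcal F_G^{\mathrm n}(p,\Lambda)\) for the equivalence of (i)--(iii); your explicit checks of the admissibility of \(T\) and of the two directions of each equivalence simply spell out what the paper states in one line.
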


\begin{proof}
For \(F\), equation \eqref{eq:directional-pairing-value} gives the value of
\(I_s^c(\Phi_F)\).
For \(T\),
\[
 \|dT_0\|=c(1-s^2),
 \qquad
 I_s^c(\Phi_T)=(1+s^2)R^2,
\]
because \(|\Phi_T|^2=c^2+b^2=R^2\) on \(\T\) and
\(\int_\T w_s\dm=1+s^2\). Applying
\eqref{eq:directional-pairing-value} to \(T\) and subtracting yields
\eqref{eq:mobius-exact-difference}. Equation \eqref{eq:mobius-iff} follows
immediately. Since \(T\) itself belongs to the normalized family,
the three final conditions are equivalent.
\end{proof}

\subsection{Complex targets and holomorphic disks}

We regard $\C^n$ as a real Hilbert space with inner product
\[
 \langle z,w\rangle_{\R}
 =\re\sum_{j=1}^n z_j\overline{w_j}.
\]
A map \(F:\D\to G\subset\C^n\) is harmonic componentwise if and only if
it is harmonic as a map into this real Hilbert space. Lemma~\ref{lem:hilbert-boundary} gives
\begin{equation}\label{eq:complex-boundary-containment}
 \Phi_F(\zeta)\in\overline G\quad\text{almost everywhere},
\end{equation}
without a convexity assumption. Holomorphic disks impose an additional
relation between the two real derivatives.

If $F$ is holomorphic and $F'(0)\ne0$, then $dF_0(\R^2)$ is necessarily a
complex line. Suppose that the affine complex-line section is the round disk
in \eqref{eq:round-affine-section}. We choose the complex orientation of
$\Lambda$ and take the unit generator $u=e_1$ from the preceding
round-section normalization, so that $\Lambda=\C u$ and $e_2=iu$. After
precomposition with a rotation of $\D$, we may normalize
\[
 F'(0)=\lambda u,\qquad \lambda=|F'(0)|>0.
\]
This is exactly the normalization \eqref{eq:directional-normalization}.

For a round complex-line section, the preceding equivalence therefore has the
following holomorphic form.

\begin{corollary}
\label{cor:holomorphic-mobius-criterion}
Let $G\subset\C^n$ be a bounded domain, let $\Lambda$ be a complex line, and assume
\eqref{eq:round-affine-section}. Let $T=E_s^c$ be the corresponding
holomorphic M\"obius parametrization through $p$. For every holomorphic disk
$F:\D\to G$ with $F(0)=p$ and $F'(0)\in\Lambda\setminus\{0\}$, after the
phase normalization above,
\begin{equation}\label{eq:holomorphic-exact-difference}
 I_s^c(\Phi_F)-(1+s^2)R^2
 =c\bigl(|F'(0)|-|T'(0)|\bigr).
\end{equation}
Consequently,
\[
 |F'(0)|\leq|T'(0)|
 \quad\Longleftrightarrow\quad
 I_s^c(\Phi_F)\leq(1+s^2)R^2.
\]
\end{corollary}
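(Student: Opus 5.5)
The plan is to reduce this corollary to Corollary~\ref{cor:mobius-extremality}. A holomorphic disk in this setting is a member of the normalized class \(\mathcal F_G^{\mathrm n}(p,\Lambda)\), so it suffices to check membership.

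\textbf{Membership.} Let \(F:\D\to G\) be holomorphic with \(F(0)=p\) and \(F'(0)\in\Lambda\setminus\{0\}\).
\begin{itemize}
\item[(a)] Each complex coordinate of \(F\) is holomorphic, hence harmonic. So \(F\) is harmonic as a map into the real Hilbert space \(\C^n\) with the inner product \(\langle\cdot,\cdot\rangle_{\R}\).
\item[(b)] It is bounded because \(G\) is bounded. Lemma~\ref{lem:hilbert-boundary} therefore supplies the boundary function \(\Phi_F\) with \(F=P[\Phi_F]\).
\item[(c)] By the Cauchy--Riemann equations, \(F_x(0)=F'(0)\) and \(F_y(0)=iF'(0)\). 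Hence \(dF_0(v_1+iv_2)=F'(0)(v_1+iv_2)\), which is complex multiplication by \(F'(0)\).
\item[(d)] This map is conformal with factor \(|F'(0)|>0\), so \(\|dF_0\|=|F'(0)|\). Its image is \(\C F'(0)=\Lambda\), since \(\Lambda\) is a complex line containing \(F'(0)\neq0\).
\end{itemize}
Thus \(F\in\mathcal F_G(p,\Lambda)\).

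\textbf{Normalization.} Write \(F'(0)=\lambda e^{i\alpha}u\) with \(\lambda=|F'(0)|\). Precomposing with the rotation \(z\mapsto e^{-i\alpha}z\) gives a holomorphic map with derivative \(\lambda u\) at the origin. For this map, \(F_x(0)=\lambda u=\lambda e_1\) and \(F_y(0)=i\lambda u=\lambda e_2\), using \(e_2=iu\). This is exactly \eqref{eq:directional-normalization}, so the rotated disk lies in \(\mathcal F_G^{\mathrm n}(p,\Lambda)\).

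\textbf{The Möbius parametrization \(T\).} The map \(T=E_s^c\) equals \(p_\perp+c\,\mathcal M_s(z)\,u\) in the coordinate \(x_1e_1+x_2e_2\leftrightarrow(x_1+ix_2)u\). It is therefore holomorphic, with \(T'(0)=c(1-s^2)u\) and \(|T'(0)|=\|dT_0\|\).

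\textbf{Conclusion.} Apply \eqref{eq:mobius-exact-difference} to the normalized disk, with \(\|dF_0\|=|F'(0)|\) and \(\|dT_0\|=|T'(0)|\). This gives \eqref{eq:holomorphic-exact-difference}. Since \(c>0\), the stated equivalence follows immediately, as in \eqref{eq:mobius-iff}.

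\textbf{Main obstacle.} The only delicate point is the orientation convention. The identification \(e_2=iu\) must agree with the orientation used to define \(E_s^c\) and the normalization. Otherwise the normalized holomorphic disk would satisfy \(\widehat\phi(-1)\neq0\) rather than \(\widehat\phi(1)=\lambda\). This is settled by the choice \(\Lambda=\C u\), \(e_1=u\), \(e_2=iu\) fixed in the preceding paragraph. With it, the complex coordinate \(\phi\) of the planar part of \(\Phi_F\) has \(\widehat\phi(1)=\lambda\) and \(\widehat\phi(-1)=0\), as in Lemma~\ref{lem:complex-poisson}.
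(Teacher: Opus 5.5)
Your proposal is correct and follows the paper's proof, which likewise notes that a holomorphic disk is harmonic with conformal real differential satisfying $\|dF_0\|=|F'(0)|$ and then applies Corollary~\ref{cor:mobius-extremality} with the complex orientation of $\Lambda$. Your additional checks only spell out what the paper leaves implicit: the image plane $\C F'(0)=\Lambda$, the rotation giving $F_x(0)=\lambda e_1$ and $F_y(0)=\lambda e_2$ with $e_2=iu$, and $T=p_\perp+c\,\mathcal M_s u$ being holomorphic with $|T'(0)|=c(1-s^2)$.
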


\begin{proof}
A holomorphic disk is harmonic and its real differential is conformal with
$\|dF_0\|=|F'(0)|$. We therefore apply
Corollary~\ref{cor:mobius-extremality} with the complex orientation of
$\Lambda$.
\end{proof}

\subsection{A supporting-hyperplane criterion for a round section}

When every point of the boundary circle determines a supporting
hyperplane for the whole target, the integral criterion can be verified
geometrically. The precise sufficient condition is the following.

\begin{proposition}\label{prop:section-support-criterion}
Assume that \(G\subset H\) is a bounded domain satisfying
\eqref{eq:round-affine-section}, and set
\[
 R=\sqrt{c^2+b^2}.
\]
Suppose that every point \(y\) of the boundary circle
\(\partial D_\Pi(p_\perp,c)\) satisfies the supporting-hyperplane condition
\begin{equation}\label{eq:section-support-condition}
 \langle X,y\rangle\leq R^2
 \qquad (X\in\overline G).
\end{equation}
Since \(|y|^2=R^2\), equality holds at \(X=y\); hence the hyperplane
\(\{X\in H:\langle X,y\rangle=R^2\}\) supports \(\overline G\) at \(y\).
Then
\begin{equation}\label{eq:section-support-value}
 \sigma_G(p,\Lambda)=(1+s^2)R^2,
 \qquad
 M_G(p,\Lambda)=\frac{R^2-|p|^2}{c}.
\end{equation}
For $y$ on the boundary circle, let
\[
 C_y=\{X\in\overline G:\langle X,y\rangle=R^2\}.
\]
A map $F=P[\Phi_F]\in\mathcal F_G^{\mathrm n}(p,\Lambda)$ attains this
value if and only if
\begin{equation}\label{eq:section-support-equality}
 \Phi_F(\zeta)\in C_{E_s^c(\zeta)}\quad\text{almost everywhere}.
\end{equation}
If $C_{E_s^c(\zeta)}=\{E_s^c(\zeta)\}$ almost everywhere, the only
normalized extremal is $E_s^c$.
\end{proposition}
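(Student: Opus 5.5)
The plan is to verify condition (iii) of Corollary~\ref{cor:mobius-extremality} pointwise on the boundary and then integrate. The equality characterization and the value of \(M_G\) then follow from the exact identities already available.

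Let \(F=P[\Phi_F]\in\mathcal F_G^{\mathrm n}(p,\Lambda)\). By Lemma~\ref{lem:hilbert-boundary}, applied with the closed set \(C=\overline G\), we have \(\Phi_F(\zeta)\in\overline G\) for almost every \(\zeta\in\T\). For \(\zeta\in\T\), the point \(y=E_s^c(\zeta)=p_\perp+c\,\mathcal M_s(\zeta)\) lies on the boundary circle \(\partial D_\Pi(p_\perp,c)\), because \(|\mathcal M_s(\zeta)|=1\). Hypothesis \eqref{eq:section-support-condition} therefore gives
\[
\langle\Phi_F(\zeta),E_s^c(\zeta)\rangle\le R^2\quad\text{a.e.}
\]
Multiply by the weight \(w_s\geq(1-s)^2>0\) and integrate, using \(\int_\T w_s\dm=1+s^2\). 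This yields \(I_s^c(\Phi_F)\le(1+s^2)R^2\). Corollary~\ref{cor:mobius-extremality} then shows that \(T=E_s^c\) is extremal and \(\sigma_G(p,\Lambda)=(1+s^2)R^2\).

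Next compute \(M_G(p,\Lambda)=\|dT_0\|=c(1-s^2)\). Since \(s=a/c\), this equals \((c^2-a^2)/c\). Now \(|p|^2=a^2+b^2\) and \(R^2=c^2+b^2\), so \(c^2-a^2=R^2-|p|^2\). This gives the second formula in \eqref{eq:section-support-value}. Alternatively, one can read it off from \eqref{eq:directional-primal-paired}.

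For the equality case, Theorem~\ref{thm:directional-paired-equivalence} shows that \(F\) attains \(M_G\) if and only if \(I_s^c(\Phi_F)=(1+s^2)R^2\). This holds if and only if
\[
\int_\T w_s\bigl(R^2-\langle\Phi_F,E_s^c\rangle\bigr)\dm=0.
\]
The integrand is nonnegative almost everywhere, and \(w_s\) is strictly positive, so the integral vanishes exactly when \(\langle\Phi_F(\zeta),E_s^c(\zeta)\rangle=R^2\) almost everywhere. Together with \(\Phi_F(\zeta)\in\overline G\), this is precisely \eqref{eq:section-support-equality}.

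For uniqueness, suppose \(C_{E_s^c(\zeta)}=\{E_s^c(\zeta)\}\) almost everywhere. Then any normalized extremal satisfies \(\Phi_F=E_s^c|_\T\) almost everywhere. The uniqueness part of Lemma~\ref{lem:hilbert-boundary} then gives \(F=P[E_s^c|_\T]=E_s^c\), where the last equality uses that \(E_s^c\) is bounded and harmonic (its components are real and imaginary parts of a holomorphic function plus a constant).

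No step is a serious obstacle; the only point needing care is the equality case. There one must use the strict positivity of \(w_s\) and the almost-everywhere containment \(\Phi_F\in\overline G\), so that vanishing of a nonnegative integral forces pointwise contact.
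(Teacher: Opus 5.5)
Your proposal is correct and follows essentially the same route as the paper. You apply the supporting-hyperplane condition pointwise to $\Phi_F\in\overline G$, integrate against the strictly positive weight $w_s$, use the vanishing of a nonnegative integral for the equality case, and use Poisson uniqueness when the contact sets are singletons. The only cosmetic difference is how you obtain the extremal values: you get $\sigma_G$ and $M_G=\|dT_0\|=c(1-s^2)$ via Corollary~\ref{cor:mobius-extremality}, whereas the paper checks directly that $E_s^c$ attains the pairing bound and reads $M_G$ off the affine identity of Theorem~\ref{thm:directional-paired-equivalence}.
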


\begin{proof}
We take \(F=P[\Phi_F]\in\mathcal F_G^{\mathrm n}(p,\Lambda)\). By Lemma~\ref{lem:hilbert-boundary},
\(\Phi_F(\zeta)\in\overline G\) for almost every \(\zeta\in\T\).
For \(\zeta\in\T\), the point \(E_s^c(\zeta)\) lies on
\(\partial D_\Pi(p_\perp,c)\), and hence
\[
 \langle\Phi_F(\zeta),E_s^c(\zeta)\rangle\leq R^2
 \qquad\text{for almost every }\zeta\in\T.
\]
Since \(\int_\T w_s\dm=1+s^2\), integration gives
\[
 I_s^c(\Phi_F)\leq(1+s^2)R^2.
\]
The conformal parametrization \(E_s^c:\D\to D_\Pi(p_\perp,c)\) belongs to
the normalized family and attains equality, because
\(|E_s^c(\zeta)|^2=c^2+b^2=R^2\) on \(\T\). Thus
\(\sigma_G(p,\Lambda)=(1+s^2)R^2\). Finally, \(cs=a\) and
\(R^2-b^2=c^2\), so Theorem~\ref{thm:directional-paired-equivalence} yields
\[
 M_G(p,\Lambda)
 =\frac{(1+s^2)c^2-2a^2}{c}
 =\frac{c^2-a^2}{c}
 =\frac{R^2-|p|^2}{c}.
\]
Equality is equivalent to the vanishing of the integral of
$w_s(R^2-\langle\Phi_F,E_s^c\rangle)$. Its integrand is nonnegative
and $w_s>0$, so this is precisely \eqref{eq:section-support-equality}.
If the contact sets are singletons, Poisson uniqueness gives $F=E_s^c$.
\end{proof}

The contact sets in the proposition need not be singletons. When $G$ is
convex they are exposed faces of $\overline G$; no convexity is needed
for the equality statement itself.

The supporting condition can also be written as
\[
 c|\pi_\Lambda X|+\langle X,p_\perp\rangle\leq c^2+b^2
 \qquad(X\in\overline G).
\]
Indeed, writing $y=p_\perp+cu$ and taking the supremum over unit vectors
$u\in\Lambda$ gives
\[
 \sup_{\substack{u\in\Lambda\\|u|=1}}
 \langle X,p_\perp+cu\rangle
 =\langle X,p_\perp\rangle+c|\pi_\Lambda X|.
\]

\begin{remark}\label{rem:boundary-containment-support}
The sufficient condition \eqref{eq:section-support-condition} depends on
the choice of origin. It need not hold in fixed coordinates even when
the M\"obius disk is extremal. For example, let
\[
 G=\D\times(-2,2)\subset\C\oplus\R,\qquad
 \Lambda=\C\oplus\{0\},\qquad p=(a,1),\quad0\leq a<1.
\]
Then $G\cap(p+\Lambda)=\D\times\{1\}$, $c=b=1$, and $R^2=2$.
For $F=(f,V)\in\mathcal F_G(p,\Lambda)$, one has $dV_0=0$ and
$df_0$ is conformal, possibly with reversed orientation. The disk estimate
of Section~\ref{sec:disk-local} therefore gives
$\|dF_0\|\leq1-a^2$, with equality for $T=(\mathcal M_a,1)$.
However, $y=(1,1)$ is on the boundary circle and
$X=(3/4,3/2)\in G$ satisfies $\langle X,y\rangle=9/4>R^2$.
For $0<\varepsilon<1$, the maps
\[
 F_\varepsilon(z)=
 \bigl(\mathcal M_a(z),1+\varepsilon\re(z^2)\bigr)
\]
also attain the extremal value and leave the affine section. Translation
by $(0,-1)$ makes this section central, and the supporting condition
then holds. Thus the example concerns the condition in fixed coordinates,
not its validity after every possible translation.
\end{remark}

\subsection{Non-ball examples and rigidity of planar sections}

A round affine section by itself does not determine the directional extremal
value; the ambient target can admit harmonic disks that leave the section and
have a larger differential.

\begin{example}\label{ex:round-section-not-sufficient}
Let
\[
 G=\bigl\{(z,w)\in\C^2:
 |z|^2<1+4|w|,\quad |z|^2+|w|^2<6\bigr\},
\]
regarded as a subset of the underlying real Hilbert space. It is open and
bounded. It is also star-shaped with respect to the origin: if
$(z,w)\in G$ and $0<t\leq1$, then
\[
 t^2|z|^2<t^2(1+4|w|)\leq1+4t|w|
\]
and
\[
 t^2(|z|^2+|w|^2)<6,
\]
while $0\in G$. Hence $G$ is connected and is a domain. We take
\[
 p=0,\qquad \Lambda=\C\times\{0\}.
\]
Then
\[
 G\cap\Lambda=\D\times\{0\},
\]
so the normalized M\"obius parametrization of the section is
\[
 T(\zeta)=(\zeta,0),\qquad \|dT_0\|=1.
\]
However,
\[
 F(\zeta)=(2\zeta,\zeta^2)
\]
is a holomorphic, hence harmonic, disk in $G$. Indeed, for $|\zeta|<1$,
\[
 |2\zeta|^2=4|\zeta|^2<1+4|\zeta|^2
 =1+4|\zeta^2|
\]
and
\[
 |2\zeta|^2+|\zeta^2|^2
 =4|\zeta|^2+|\zeta|^4<5<6.
\]
Moreover, $F(0)=0$, the differential $dF_0$ is conformal with image plane
$\Lambda$, and $\|dF_0\|=2$. Thus $T$ is not extremal. In particular, the
roundness of $G\cap(p+\Lambda)$ alone does not control the extremal problem;
information about the ambient target is essential.
\end{example}

The supporting-hyperplane condition, on the other hand, does not characterize
balls. The following rotational construction gives a bounded convex non-ball
target for which one round affine section satisfies the condition and its
M\"obius parametrization is directionally extremal.

\begin{example}\label{ex:rotational-section-support}
Let \(z_0\in(0,1)\) and set
\[
 c=\sqrt{1-z_0^2}.
\]
In the \(xz\)-plane, let \(A=(c,z_0)\) be the point on the upper unit
semicircle at height \(z_0\). The tangent line at \(A\) is
\[
 cx+z_0z=1,
\]
and it meets the \(x\)-axis at \(A_0=(c^{-1},0)\). Rotating the planar profile formed by the circular arc from \(A\) to
\((0,1)\) and the segment \(AA_0\) about the \(z\)-axis gives a surface
of revolution. This surface and its base disk in the plane \(z=0\)
bound a solid of revolution. Write
\[
 e_3=(0,0,1),\qquad
 \Lambda=\R^2\times\{0\}.
\]
Then the interior of this solid in \(\{z>0\}\) is the bounded rotational
domain
\begin{equation}\label{eq:rotational-domain}
 G_{z_0}
 =\bigl\{v+ze_3:\ v\in\Lambda,\ 0<z<1,\ |v|<R_{z_0}(z)\bigr\},
\end{equation}
where
\begin{equation}\label{eq:rotational-radius}
 R_{z_0}(z)=
 \begin{cases}
 \dfrac{1-z_0z}{c},&0\leq z\leq z_0,\\[6pt]
 \sqrt{1-z^2},&z_0\leq z<1.
 \end{cases}
\end{equation}
The two pieces have the same value and derivative at \(z_0\), and
\(R_{z_0}\) is concave. Indeed, if
\(X_j=v_j+z_je_3\in G_{z_0}\) and \(0<\theta<1\), then
\[
\begin{aligned}
 |\theta v_1+(1-\theta)v_2|
 &\leq\theta|v_1|+(1-\theta)|v_2|\\
 &<\theta R_{z_0}(z_1)+(1-\theta)R_{z_0}(z_2)\\
 &\leq R_{z_0}\bigl(\theta z_1+(1-\theta)z_2\bigr).
\end{aligned}
\]
Thus \(G_{z_0}\) is convex. Since the first
branch in \eqref{eq:rotational-radius} is the tangent line to
\(z\mapsto\sqrt{1-z^2}\) at \(z_0\), the domain \(G_{z_0}\) contains the open upper half-ball
\(\B_{\R^3}\cap\{z>0\}\) and is strictly larger below the level
\(z=z_0\). On the band $z_0<z<1$, however, its boundary agrees with the
unit sphere. If $G_{z_0}$ were a ball, the two spheres would therefore agree,
and $G_{z_0}$ would be the unit ball, contrary to the strict inclusion below
$z=z_0$. Hence $G_{z_0}$ is not a ball.

Nevertheless, for the affine plane
\[
 \Pi=z_0e_3+\Lambda
\]
one has
\begin{equation}\label{eq:rotational-round-section}
 G_{z_0}\cap\Pi
 =\{z_0e_3+v:\ v\in\Lambda,\ |v|<c\},
\end{equation}
so the section is a Euclidean disk. If
\[
 q=z_0e_3+cu,\qquad u\in\Lambda,\quad |u|=1,
\]
then
\begin{equation}\label{eq:rotational-support}
 \langle X,q\rangle\leq1
 \qquad (X\in\overline{G_{z_0}}).
\end{equation}
Indeed, write \(X=v+ze_3\), with \(v\in\Lambda\). For
\(0\leq z\leq z_0\),
\[
 \langle X,q\rangle
 \leq c|v|+z_0z
 \leq c\frac{1-z_0z}{c}+z_0z=1,
\]
whereas for \(z_0\leq z\leq1\),
\[
 \langle X,q\rangle
 \leq c\sqrt{1-z^2}+z_0z
 \leq\sqrt{c^2+z_0^2}\,
       \sqrt{1-z^2+z^2}=1.
\]

For \(p\in G_{z_0}\cap\Pi\), write
\[
 p=p_\Lambda+z_0e_3,\qquad
 a=|p_\Lambda|<c,
\]
and choose an orthonormal basis \(e_1,e_2\) of \(\Lambda\) with
\(p_\Lambda=ae_1\), taking \(e_1\) arbitrarily when \(a=0\). We set
\[
 s=\frac ac.
\]
Then the boundary restriction of \(E_s^c\) from \eqref{eq:weighted-E-def}, with
\(p_\perp=z_0e_3\), parametrizes the circle in
\eqref{eq:rotational-round-section}. Since \(c^2+z_0^2=1\),
\eqref{eq:rotational-support} is precisely the supporting-hyperplane condition in
Proposition~\ref{prop:section-support-criterion}, with \(R=1\).
Consequently,
\begin{equation}\label{eq:rotational-sigma}
 \sigma_{G_{z_0}}(p,\Lambda)=1+s^2.
\end{equation}
The same proposition gives
\begin{equation}\label{eq:rotational-directional-value}
 M_{G_{z_0}}(p,\Lambda)
 =\frac{c^2-a^2}{c}
 =\frac{1-|p|^2}{\sqrt{1-z_0^2}}.
\end{equation}
This is exactly the value of
\(M_{\B_{\R^3}}(p,\Lambda)\) for the unit ball. Thus a non-ball target can
have a round affine section that satisfies the same supporting-hyperplane
condition and has the same directional extremal value as the corresponding
section of the unit ball.
\end{example}

We now consider what happens when every central real two-dimensional
section is a disk.

\begin{theorem}\label{thm:circular-section-rigidity}
Let \(H\) be a real Hilbert space with \(\dim H\geq2\), and let \(G\subset H\)
be an open set containing the origin. Suppose that for every real
two-dimensional linear subspace \(\Lambda\subset H\) there is a number
\(r_\Lambda>0\) such that
\begin{equation}\label{eq:central-circular-sections}
 G\cap\Lambda
 =\{x\in\Lambda:|x|<r_\Lambda\}.
\end{equation}
Then all the radii \(r_\Lambda\) are equal and \(G\) is an open Hilbert ball
centered at the origin.
\end{theorem}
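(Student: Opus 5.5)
The plan is to show first that any two radii coincide, and then to identify \(G\) with the ball of that common radius.

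For equal radii, take two real two-dimensional subspaces \(\Lambda_1,\Lambda_2\). Suppose first that they intersect in a line, and choose a unit vector \(u\in\Lambda_1\cap\Lambda_2\). On the ray \(\{tu:t\geq0\}\), the set \(G\cap\R u\) can be read off from either plane. By \eqref{eq:central-circular-sections} applied to \(\Lambda_1\), the set of \(t\geq0\) with \(tu\in G\) is \([0,r_{\Lambda_1})\). Applied to \(\Lambda_2\), the same set is \([0,r_{\Lambda_2})\). Hence \(r_{\Lambda_1}=r_{\Lambda_2}\).

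If \(\Lambda_1\cap\Lambda_2=\{0\}\), which is possible only when \(\dim H\geq4\), I will pass through an intermediate plane. Pick nonzero \(u_1\in\Lambda_1\) and \(u_2\in\Lambda_2\); they are linearly independent. The plane \(\Lambda_3=\operatorname{span}\{u_1,u_2\}\) meets \(\Lambda_1\) in \(\R u_1\) and \(\Lambda_2\) in \(\R u_2\). The previous case gives \(r_{\Lambda_1}=r_{\Lambda_3}=r_{\Lambda_2}\).

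When \(\dim H=2\) there is only one plane, so the radius is automatically common. Denote the common value by \(r\).

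To identify \(G\), take any \(x\in H\). If \(x=0\), then \(x\in G\) by hypothesis, and \(|x|<r\). If \(x\neq0\), choose a plane \(\Lambda\ni x\); this uses \(\dim H\geq2\) to find a second independent vector. Then \(x\in G\) if and only if \(x\in G\cap\Lambda\), which by \eqref{eq:central-circular-sections} holds if and only if \(|x|<r\). Therefore \(G=\{x\in H:|x|<r\}\).

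No step is a genuine obstacle. The only points needing care are the case of planes meeting only at the origin in dimension at least four, handled by the intermediate plane \(\Lambda_3\), and the observation that the hypothesis already gives \(0\in G\). Openness of \(G\) is not even needed beyond what the hypothesis supplies.
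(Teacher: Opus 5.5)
Your proof is correct and is essentially the paper's argument. The paper defines the radial function $\rho(u)=\sup\{t>0:tu\in G\}$ on unit vectors, notes that $\rho(u)=r_\Lambda$ for every plane $\Lambda\ni u$, and obtains constancy because any two unit vectors lie in a common plane, which is the dual of your comparison of planes through a shared line. Working with vectors rather than planes lets the paper avoid a separate case for planes meeting only at the origin; your intermediate plane $\Lambda_3$ handles that case correctly.
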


\begin{proof}
For a unit vector \(u\in H\), we set
\[
 \rho(u)=\sup\{t>0:tu\in G\}.
\]
Assumption \eqref{eq:central-circular-sections} shows that \(\rho(u)\) equals
\(r_\Lambda\) for every real two-plane \(\Lambda\) containing \(u\). If
\(u\) and \(v\) are linearly independent unit vectors,
then both belong to the real plane \(\operatorname{span}\{u,v\}\), and
\eqref{eq:central-circular-sections} gives
\(\rho(u)=\rho(v)=r_{\operatorname{span}\{u,v\}}\). The same equality is
immediate when \(u\) and \(v\) are linearly dependent. Hence \(\rho\) is a
constant, say \(R\), on the unit sphere. Every nonzero vector belongs to a
real two-dimensional subspace, so \eqref{eq:central-circular-sections} now
gives
\[
 x\in G\quad\Longleftrightarrow\quad |x|<R.
\]
Thus \(G=\B_H(0,R)\).
\end{proof}

The use of real two-planes in
Theorem~\ref{thm:circular-section-rigidity} is essential. The following
example shows that circular complex-line sections need not force a ball, even
for a balanced convex domain in a complex Hilbert space.

\begin{example}\label{ex:anisotropic-complex-ellipsoid}
Consider
\begin{equation}\label{eq:anisotropic-complex-ellipsoid}
 \mathcal E
 =\bigl\{(z_1,z_2)\in\C^2:
 |z_1|^2+|z_2|^4<1\bigr\}.
\end{equation}
The domain \(\mathcal E\) is bounded and convex. It is also balanced: if
\(z\in\mathcal E\) and \(|\zeta|\leq1\), then \(\zeta z\in\mathcal E\). If
\(v=(v_1,v_2)\neq0\), then
\[
 \zeta v\in\mathcal E
 \quad\Longleftrightarrow\quad
 |\zeta|^2|v_1|^2+|\zeta|^4|v_2|^4<1.
\]
The right-hand side depends only on \(|\zeta|\), so the intersection of
\(\mathcal E\) with every complex line through the origin is a Euclidean disk
in that line. On the real plane
\[
 \Pi=\{(x,y):x,y\in\R\}\subset\C^2,
\]
however, the section is
\[
 \mathcal E\cap\Pi=\{(x,y):x^2+y^4<1\},
\]
which is not a Euclidean disk. Indeed, its radial boundary points on both
coordinate axes have distance \(1\) from the origin, whereas
\((2^{-1/2},2^{-1/2})\), also of Euclidean norm \(1\), lies in the interior.
This example distinguishes complex balance from isotropy in the underlying
real Hilbert geometry. More generally, the intersection of any balanced
domain in \(\C^n\) containing the origin with a complex line through the
origin is a centered disk in that line, possibly the whole line in the
unbounded case. Thus central complex-line circularity is much weaker than
isotropy of real two-dimensional sections.
\end{example}

For \(\mathcal E\), the supporting-hyperplane condition in
Proposition~\ref{prop:section-support-criterion} can be tested explicitly
on each central complex-line section. We now determine exactly the lines
for which this condition holds.

\begin{proposition}\label{prop:ellipsoid-support-lines}
Regard \(\C^2\) as a real Hilbert space and let \(\mathcal E\) be the domain
in \eqref{eq:anisotropic-complex-ellipsoid}. Let
\(\Lambda=\C v\), where \(v=(v_1,v_2)\) and \(|v|=1\), and let
\(r_\Lambda>0\) be determined by
\[
 \mathcal E\cap\Lambda
 =\{\zeta v:|\zeta|<r_\Lambda\}.
\]
Then the boundary circle of this section satisfies
\begin{equation}\label{eq:ellipsoid-line-support}
 \langle X,q\rangle\leq r_\Lambda^2
 \qquad
 \bigl(X\in\overline{\mathcal E},\quad
 q\in\partial(\mathcal E\cap\Lambda)\bigr)
\end{equation}
if and only if either
\begin{equation}\label{eq:ellipsoid-coordinate-directions}
 v_1v_2=0,
\end{equation}
or
\begin{equation}\label{eq:ellipsoid-mixed-direction}
 |v_1|^2=\frac35,
 \qquad
 |v_2|^2=\frac25.
\end{equation}
In these directions, for every \(p\in\mathcal E\cap\Lambda\), the M\"obius
parametrization of \(\mathcal E\cap\Lambda\) through \(p\) is a directional
extremal harmonic disk and
\begin{equation}\label{eq:ellipsoid-directional-value}
 M_{\mathcal E}(p,\Lambda)
 =\frac{r_\Lambda^2-|p|^2}{r_\Lambda}.
\end{equation}
For the two coordinate complex lines, \(r_\Lambda=1\); in the mixed case
\eqref{eq:ellipsoid-mixed-direction}, \(r_\Lambda=\sqrt5/2\).
\end{proposition}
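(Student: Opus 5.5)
The plan is to reduce the supporting-hyperplane condition \eqref{eq:ellipsoid-line-support} to a first-order tangency condition at a single boundary point, solve that condition explicitly, and then invoke Proposition~\ref{prop:section-support-criterion}.

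\textbf{Step 1: reduction to one boundary point.} The section is central, so in the notation of Proposition~\ref{prop:section-support-criterion} we have $p_\perp=0$, $b=0$ and $c=R=r_\Lambda$. The boundary circle consists of the points $q=e^{i\theta}r_\Lambda v$. The map $X\mapsto e^{i\theta}X$ is a real orthogonal map of $\C^2$ that preserves $\mathcal E$, since $\mathcal E$ is balanced. Hence \eqref{eq:ellipsoid-line-support} holds for every $q$ on the circle as soon as it holds for $q_0=r_\Lambda v$. For $q_0$ the condition reads $\langle X,r_\Lambda v\rangle\le r_\Lambda^2$ on $\overline{\mathcal E}$. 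Equality holds at $X=q_0\in\partial\mathcal E$, so the condition says that the real hyperplane through $q_0$ with normal $v$ supports $\overline{\mathcal E}$. Since $\mathcal E$ is convex and its boundary is $C^1$ away from the origin, this is equivalent to the outward normal of $\partial\mathcal E$ at $q_0$ being a positive multiple of $v$.

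\textbf{Step 2: the tangency equation.} Write $\rho(z)=|z_1|^2+|z_2|^4$, $\alpha=|v_1|$ and $\beta=|v_2|$. The real gradient is $\nabla\rho(z)=(2z_1,\,4|z_2|^2z_2)$. At $z=q_0=r v$ with $r=r_\Lambda$ it equals $(2rv_1,\,4r^3\beta^2v_2)$. This is a positive multiple of $v=(v_1,v_2)$ precisely in two situations: either one coordinate of $v$ vanishes, or $2r=4r^3\beta^2$, that is, $r^2\beta^2=\tfrac12$. The radius is fixed by $\rho(q_0)=1$, i.e.\ $r^2\alpha^2+r^4\beta^4=1$.
\begin{itemize}
\item In the mixed case, substituting $r^2\beta^2=\tfrac12$ gives $r^2\alpha^2=\tfrac34$. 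Hence $\alpha^2/\beta^2=\tfrac32$, and with $\alpha^2+\beta^2=1$ we get $\alpha^2=\tfrac35$, $\beta^2=\tfrac25$ and $r^2=\tfrac54$. This is \eqref{eq:ellipsoid-mixed-direction} together with $r_\Lambda=\sqrt5/2$.
\item In the coordinate cases, $r_\Lambda=1$. The inequality can also be checked directly from $|x_1|\le1$ and $|x_2|\le1$ on $\overline{\mathcal E}$.
\end{itemize}
The converse direction needs care. If $v_1v_2\ne0$ and $r^2\beta^2\ne\tfrac12$, then the gradient is not parallel to $v$. In that case some $X\in\mathcal E$ near $q_0$ has $\langle X,v\rangle>r$, so the condition fails.

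\textbf{Step 3: extremality and the value.} In the admissible directions, Proposition~\ref{prop:section-support-criterion} applies with $R=c=r_\Lambda$. We use the complex orientation of $\Lambda$ and the basis $e_1$ along $p$, so that $E_s^c$ is the holomorphic M\"obius disk. The proposition then gives $M_{\mathcal E}(p,\Lambda)=(r_\Lambda^2-|p|^2)/r_\Lambda$, which is \eqref{eq:ellipsoid-directional-value}, and it shows that the parametrization attains this value.

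\textbf{Main obstacle.} The step needing care is the equivalence in Step~1 between the global supporting inequality and the local normal condition. Sufficiency uses convexity of $\mathcal E$: the tangent hyperplane at a boundary point of a convex $C^1$ domain is supporting. Necessity uses only differentiability of $\rho$ at $q_0\neq0$. I would write both directions out explicitly. I would also verify directly that $\partial\mathcal E$ is $C^1$ at $q_0$, which holds because $\nabla\rho(q_0)\neq0$.
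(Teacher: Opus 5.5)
Your proposal is correct and follows essentially the same route as the paper. Both arguments reduce by unimodular (circular) invariance to the single point $q=r_\Lambda v$, identify the support condition with the outward normal $(2q_1,4|q_2|^2q_2)$ being a positive multiple of $q$, and solve $r_\Lambda^2|v_2|^2=\tfrac12$ to obtain $|v_1|^2=\tfrac35$, $|v_2|^2=\tfrac25$ and $r_\Lambda=\sqrt5/2$. Both then apply Proposition~\ref{prop:section-support-criterion} with $b=0$ and $c=R=r_\Lambda$. You write out the necessity direction (a non-parallel normal yields a nearby point of $\mathcal E$ with $\langle X,q\rangle>r_\Lambda^2$) more explicitly than the paper, which only cites ``the same normal calculation.''
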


\begin{proof}
We set
\[
 \rho(z_1,z_2)=|z_1|^2+|z_2|^4,
 \qquad
 \mathcal E=\{\rho<1\}.
\]
The function \(\rho\) is convex and the boundary of \(\mathcal E\) is smooth.
Since \(\mathcal E\) is invariant under multiplication by unimodular
scalars, it is enough to test \eqref{eq:ellipsoid-line-support} at
\(q=r_\Lambda v\). Since \(|q|=r_\Lambda\), the hyperplane
\[
 \{X:\langle X,q\rangle=r_\Lambda^2\}
\]
passes through \(q\) and has normal vector \(q\). Because \(\mathcal E\) is
convex and its boundary is smooth, this hyperplane supports \(\mathcal E\)
at \(q\) if and only if the outward normal at \(q\) is a positive multiple of \(q\).
With respect to the underlying real inner product,
\[
 D\rho_q(h)
 =\left\langle h,
 \bigl(2q_1,4|q_2|^2q_2\bigr)\right\rangle,
\]
so the outward normal direction is
\[
 \bigl(2q_1,4|q_2|^2q_2\bigr).
\]
If \(q_1q_2\neq0\), this positive proportionality forces
\(4|q_2|^2=2\), hence \(|q_2|^2=1/2\). Since \(q\in\partial\mathcal E\),
\[
 |q_1|^2+|q_2|^4=1,
\]
and therefore \(|q_1|^2=3/4\). Consequently
\(|q|^2=5/4\), and division by
\(r_\Lambda^2=|q|^2\) gives
\eqref{eq:ellipsoid-mixed-direction}; it also gives
\(r_\Lambda=\sqrt5/2\). If one coordinate of \(q\) vanishes, the outward
normal is automatically a positive multiple of \(q\), giving precisely the coordinate
directions \eqref{eq:ellipsoid-coordinate-directions}. The converse follows
from the same normal calculation, and smooth convexity then gives the
supporting inequality for every point of the boundary circle.

For the stated directions, Proposition~\ref{prop:section-support-criterion}
applies with \(b=0\), \(c=R=r_\Lambda\), and yields
\eqref{eq:ellipsoid-directional-value}. Corollary~\ref{cor:mobius-extremality}
then identifies the corresponding M\"obius parametrization as a directional
extremal harmonic disk.
\end{proof}

The preceding proposition identifies exactly the central complex lines of
\(\mathcal E\) on which the supporting-hyperplane criterion applies.
Extremality, however, holds on every central complex line of a bounded
balanced convex domain.

\begin{theorem}\label{thm:balanced-complex-lines}
Let $G\subset\C^n$ be a bounded balanced convex domain containing the
origin. For a unit vector $v$, put $\Lambda=\C v$ and let $r_\Lambda>0$
be determined by
\[
 G\cap\Lambda=\{\zeta v:|\zeta|<r_\Lambda\}.
\]
Then, for every $p\in G\cap\Lambda$,
\begin{equation}\label{eq:balanced-complex-line-value}
 M_G(p,\Lambda)=\frac{r_\Lambda^2-|p|^2}{r_\Lambda}.
\end{equation}
The M\"obius parametrization of $G\cap\Lambda$ through $p$ is a directional
extremal harmonic disk.
\end{theorem}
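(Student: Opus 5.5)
The plan is to reduce the problem to a planar harmonic map into the unit disk. The reduction uses a complex-linear functional that supports $G$ along the whole section $G\cap\Lambda$. Once the functional is in place, the M\"obius-weighted pairing of Theorem~\ref{thm:weighted-pairing-identity}, in the scalar case $H=\C$ with $b=0$, gives the bound directly. This bypasses the supporting-hyperplane condition of Proposition~\ref{prop:section-support-criterion}, which fails on some lines (Proposition~\ref{prop:ellipsoid-support-lines}). The point is that the condition is required only after composing with a functional that sees the section isometrically up to the factor $1/r_\Lambda$.

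\emph{Step 1: the supporting functional.} Let $\mu$ be the Minkowski functional of $G$. Since $G$ is bounded, balanced, convex and open, $\mu$ is a norm on $\C^n$ with $G=\{\mu<1\}$ and $\mu(\zeta v)=|\zeta|/r_\Lambda$. On $\Lambda$, define the complex-linear functional $\ell_0(\zeta v)=\zeta/r_\Lambda$; it satisfies $|\ell_0|=\mu$ there. By the complex Hahn--Banach theorem, $\ell_0$ extends to a complex-linear $\ell$ on $\C^n$ with $|\ell|\leq\mu$, so $|\ell(X)|<1$ for all $X\in G$.

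\emph{Step 2: the planar map.} Write $p=\zeta_0v$. After replacing $v$ by a unimodular multiple, which changes neither $\Lambda$ nor $r_\Lambda$, we may assume $\zeta_0=|p|=a$. For $F\in\mathcal F_G(p,\Lambda)$, set $g=\ell\circ F:\D\to\D$. Then $g$ is harmonic as a linear image of a harmonic map, and $g(0)=a/r_\Lambda=s$. Since $\ell|_\Lambda$ is the complex-linear map $\Lambda\to\C$ multiplying lengths by $1/r_\Lambda$, the differential $dg_0=\ell\circ dF_0$ is again conformal, possibly orientation-reversing, with $\|dg_0\|=\|dF_0\|/r_\Lambda$.

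\emph{Step 3: the disk estimate.} After precomposing with a rotation or reflection of $\D$, we may assume $g_z(0)=\lambda'=\|dg_0\|>0$ and $g_{\overline z}(0)=0$. Apply Theorem~\ref{thm:weighted-pairing-identity} with $H=\C$, $\Lambda=\C$, $c=1$, $b=0$, $a$ replaced by $s$, and $\Phi=\Phi_g$; Lemma~\ref{lem:complex-poisson} supplies the Fourier coefficients. This gives
\[
\lambda'+2s^2=\int_\T w_s\re\bigl(\Phi_g\overline{\mathcal M_s}\bigr)\dm\leq\int_\T w_s\dm=1+s^2 .
\]
The inequality holds because $|\Phi_g|\leq1$ a.e. by Lemma~\ref{lem:hilbert-boundary} and $|\mathcal M_s|=1$ on $\T$. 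Hence $\|dF_0\|=r_\Lambda\lambda'\leq r_\Lambda(1-s^2)=(r_\Lambda^2-|p|^2)/r_\Lambda$. The M\"obius parametrization $E_s^{r_\Lambda}$ lies in $G\cap\Lambda\subset G$ and attains this value by Corollary~\ref{cor:mobius-extremality}. This proves \eqref{eq:balanced-complex-line-value} and the extremality of the M\"obius disk.

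The main obstacle is Step~1. Two things must be checked carefully there: that the extension can be taken \emph{complex}-linear, so that $dg_0$ stays conformal, and that $\mu$ is finite and dominates $|\ell|$ with strict inequality on the open set $G$. A real-linear supporting functional at a single boundary point would not preserve the conformal structure, and it would not give the uniform bound $|g|<1$ needed in Step~3.
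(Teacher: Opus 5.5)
Your proof is correct and follows the paper's argument: you use the Minkowski norm, extend $\zeta v\mapsto\zeta/r_\Lambda$ by complex Hahn--Banach, and reduce to a planar harmonic map into $\D$ whose differential at the origin is conformal. The only difference is that you carry out the scalar M\"obius-weighted pairing estimate inline, whereas the paper cites the unit-disk case of Proposition~\ref{prop:section-support-criterion}, which is the same computation.
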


\begin{proof}
We write
\[
 \mu_G(z)=\inf\{t>0:z\in tG\}
\]
for the Minkowski functional of $G$. Boundedness, balance, and convexity
make $\mu_G$ a complex norm, with $G=\{z:\mu_G(z)<1\}$ and
$\mu_G(\zeta v)=|\zeta|/r_\Lambda$. The complex Hahn-Banach theorem
extends the functional $\zeta v\mapsto\zeta/r_\Lambda$ to a complex
linear functional $\mathcal L_v:\C^n\to\C$ satisfying
\[
 |\mathcal L_v(z)|\leq\mu_G(z),
 \qquad \mathcal L_v(\zeta v)=\frac{\zeta}{r_\Lambda}.
\]
In particular, $\mathcal L_v(G)\subset\D$.
For $F\in\mathcal F_G(p,\Lambda)$, the harmonic map
$f=\mathcal L_v\circ F:\D\to\D$ has a nonzero conformal differential
at the origin. Since $\mathcal L_v|_\Lambda$ is a similarity of ratio
$1/r_\Lambda$,
\[
 \|df_0\|=\frac{\|dF_0\|}{r_\Lambda},
 \qquad |f(0)|=\frac{|p|}{r_\Lambda}.
\]
The unit-disk case of Proposition~\ref{prop:section-support-criterion}
therefore gives
\[
 \frac{\|dF_0\|}{r_\Lambda}
 \leq1-\frac{|p|^2}{r_\Lambda^2}.
\]
The M\"obius parametrization of $G\cap\Lambda$ through $p$ attains this
bound, which proves \eqref{eq:balanced-complex-line-value}.
\end{proof}

The theorem does not assert uniqueness for a general balanced convex target.

For example, $v=(1,1)/\sqrt2$ gives a central complex-line section of
$\mathcal E$ with
\[
 \frac{r_\Lambda^2}{2}+\frac{r_\Lambda^4}{4}=1,
 \qquad r_\Lambda^2=\sqrt5-1.
\]
This direction satisfies neither \eqref{eq:ellipsoid-coordinate-directions}
nor \eqref{eq:ellipsoid-mixed-direction}, so the supporting-hyperplane
condition \eqref{eq:ellipsoid-line-support} fails there. Its M\"obius disk
is nevertheless extremal by Theorem~\ref{thm:balanced-complex-lines}.
Thus Proposition~\ref{prop:ellipsoid-support-lines} describes exactly the
central complex lines on which the supporting-hyperplane criterion applies,
whereas Theorem~\ref{thm:balanced-complex-lines} establishes extremality on
all central complex lines.

We next write the disk estimate used above directly in terms of the
relevant Fourier coefficients.

\section{The disk-valued problem with conformality at the reference point}\label{sec:disk-local}

The planar disk problem is the scalar specialization of the weighted
identity from Section~\ref{sec:weighted-support}, and in this setting the
boundary calculation can be written explicitly. At the origin we impose the complex-linearity condition
\(f_{\overline z}(0)=0\). When \(df_0\ne0\), this is precisely
orientation-preserving conformality at the origin; allowing \(df_0=0\) does not
change the extremal value. No conformality in a neighborhood is required. By
Lemma~\ref{lem:complex-poisson}, the condition
\(f_{\overline z}(0)=0\) is exactly \(\widehat\Phi(-1)=0\).

For \(p\in\D\), define
\[
 M_{\mathrm{conf}}(p)=\sup\{ |f_z(0)|:\ f:\D\to\D\text{ harmonic},\ f(0)=p,
 f_{\overline z}(0)=0\}.
\]
By multiplying the range by a unimodular constant, it is enough to consider \(p=r\in[0,1)\). Lemmas~\ref{lem:complex-poisson} and~\ref{lem:hilbert-boundary}, applied with \(H=\R^2\), give the boundary form
\begin{equation}\label{eq:disk-boundary-abs}
 M_{\mathrm{conf}}(r)=\sup\{ |\widehat\Phi(1)|:\ |\Phi|\leq1,
 \widehat\Phi(0)=r,
 \widehat\Phi(-1)=0\}.
\end{equation}
Indeed, every harmonic map occurring in the definition of
$M_{\mathrm{conf}}(r)$ has such a boundary function. Conversely, suppose that
$|\Phi|\leq1$ almost everywhere and $\widehat\Phi(0)=r<1$, and set
$u=P[\Phi]$. If $|u(z_0)|=1$ at an interior point
$z_0=\rho e^{i\theta}$, put $\omega=u(z_0)$. Since
$1-\re(\Phi(e^{it})\overline\omega)\geq0$ almost everywhere and the Poisson
kernel is strictly positive,
\[
 0=1-\re(u(z_0)\overline\omega)
 =\int_{\T}P_\rho(\theta-t)
 \bigl(1-\re(\Phi(e^{it})\overline\omega)\bigr)\dm
\]
implies $\re(\Phi(e^{it})\overline\omega)=1$ almost everywhere. Together with
$|\Phi|\leq1=|\omega|$, this gives $\Phi=\omega$ almost everywhere, contrary to
$|\widehat\Phi(0)|=r<1$. Hence $P[\Phi]$ takes values in $\D$, and the
boundary formulation in \eqref{eq:disk-boundary-abs} introduces no
additional maps.
The phase of \(\widehat\Phi(1)\) may be normalized by rotating the boundary
variable. Hence \eqref{eq:disk-boundary-abs} is equivalent to
\begin{equation}\label{eq:disk-boundary-real}
 M_{\mathrm{conf}}(r)=\sup\{ \re\widehat\Phi(1):\ |\Phi|\leq1,
 \widehat\Phi(0)=r,
 \widehat\Phi(-1)=0\}.
\end{equation}
Indeed, if \(\Psi(e^{it})=\Phi(e^{i(t+\theta)})\), then \(\widehat\Psi(n)=e^{in\theta}\widehat\Phi(n)\). The vanishing of \(\widehat\Phi(-1)\) is preserved by this rotation, but it cannot be created by it.

Using only the \(L^2\) consequence of the disk constraint gives a weaker
estimate. If \(|\Phi|\leq1\), \(\widehat\Phi(0)=r\), and \(\widehat\Phi(-1)=0\), then Bessel's inequality gives
\begin{equation}\label{eq:parseval-bound}
 |\widehat\Phi(1)|^2\leq
 \sum_{n\neq0}|\widehat\Phi(n)|^2
 \leq \int_\T|\Phi|^2\dm-r^2\leq1-r^2.
\end{equation}
Thus \(|\widehat\Phi(1)|\leq\sqrt{1-r^2}\). The relaxed problem with
\(\|\Phi\|_2\leq1\) attains this value at
$\Phi(z)=r+\sqrt{1-r^2}\,z$. For $0<r<1$, however, this function violates
the pointwise constraint, since its maximum modulus on $\T$ is
$r+\sqrt{1-r^2}>1$. The weighted argument below gives the stronger
bound $1-r^2$ for the original problem.

To specialize Theorem~\ref{thm:weighted-pairing-identity} to the
disk-valued problem, we set
\begin{equation}\label{eq:q-def}
 q_r(z)=z^{-1}+2r+r^2z=z^{-1}(1+rz)^2,
 \qquad z\in\T,\quad 0\leq r<1.
\end{equation}
Identity~\eqref{eq:weighted-auto-general} gives
$q_r=w_r\overline{\mathcal M_r}$, and hence
\begin{equation}\label{eq:q-modulus}
 |q_r(z)|=|1+rz|^2=1+r^2+2r\re z.
\end{equation}
Thus $q_r$ has no zeros on $\T$, and the disk constraint gives the
following pointwise inequality.

\begin{lemma}\label{lem:disk-support}
For every \(z\in\T\) and every \(\zeta\in\overline\D\),
\[
 \re(\zeta q_r(z))\leq |q_r(z)|.
\]
Equality holds if and only if \(|\zeta|=1\) and \(\zeta q_r(z)\) is a nonnegative real number.
\end{lemma}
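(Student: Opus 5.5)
The plan is to reduce the claim to the elementary fact that, for complex numbers $\zeta$ and $w$, one has $\re(\zeta w)\leq|\zeta w|\leq|w|$ when $|\zeta|\leq1$. The pairing with the weight is then only an application of Cauchy--Schwarz in $\C\cong\R^2$. Fix $z\in\T$ and $\zeta\in\overline\D$, and put $w=q_r(z)$. By \eqref{eq:q-modulus}, $|w|=|1+rz|^2\geq(1-r)^2>0$, so $w\neq0$. This nonvanishing is used only in the equality analysis.

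First I will record the chain
\[
 \re(\zeta w)\leq|\zeta w|=|\zeta|\,|w|\leq|w|.
\]
The first inequality is the elementary bound $\re u\leq|u|$. The last one uses $|\zeta|\leq1$ together with $|w|\geq0$. This proves the inequality for every $z\in\T$.

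Next I will characterize equality. It holds throughout if and only if both inequalities in the chain are equalities. The first inequality is an equality exactly when $\zeta w$ is a nonnegative real number. The second is an equality exactly when $(1-|\zeta|)|w|=0$, and since $|w|>0$ this means $|\zeta|=1$. For the converse, assume $|\zeta|=1$ and $\zeta w\geq0$. Then $\re(\zeta w)=\zeta w=|\zeta w|=|w|$.

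I do not expect a real obstacle here. The only point needing care is that the characterization $|\zeta|=1$ depends on $q_r$ having no zeros on $\T$, which is guaranteed because $r<1$. Without that, equality would hold trivially at a zero of $q_r$ for every $\zeta$.

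It is worth noting how the lemma will be used. Given the normalization $\widehat\Phi(0)=r$ and $\widehat\Phi(-1)=0$, integrating the lemma with $\zeta=\Phi(z)$ against $dm$ gives $\re\widehat\Phi(1)+2r^2\leq1+r^2$, which is the bound $1-r^2$. The equality condition forces $\Phi=\overline{q_r}/|q_r|=\mathcal M_r$ almost everywhere.
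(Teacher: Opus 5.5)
Your proposal is correct and is essentially the paper's proof: the chain $\re(\zeta q_r(z))\leq|\zeta q_r(z)|=|\zeta|\,|q_r(z)|\leq|q_r(z)|$, with the equality case read off from the two steps using $q_r(z)\neq0$ on $\T$. Your bound $|q_r(z)|\geq(1-r)^2>0$ simply makes that nonvanishing explicit.
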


\begin{proof}
Since \(|\zeta|\leq1\),
\[
 \re\bigl(\zeta q_r(z)\bigr)
 \leq |\zeta q_r(z)|
 =|\zeta|\,|q_r(z)|
 \leq |q_r(z)|.
\]
Since \(q_r(z)\neq0\) on \(\T\), equality holds precisely when
\(|\zeta|=1\) and \(\zeta q_r(z)\) is a nonnegative real number.
\end{proof}

Integrating the pointwise inequality gives the required coefficient bound.

\begin{proposition}\label{prop:disk-coeff-real}
Let \(0\leq r<1\). If \(\Phi\in L^\infty(\T)\), \(|\Phi|\leq1\), \(\widehat\Phi(0)=r\), and \(\widehat\Phi(-1)=0\), then
\[
 \re\widehat\Phi(1)\leq1-r^2.
\]
The constant is sharp.
\end{proposition}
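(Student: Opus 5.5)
The plan is to integrate the pointwise inequality of Lemma~\ref{lem:disk-support} against normalized measure and read off the Fourier coefficients using \eqref{eq:q-def}. Take \(\zeta=\Phi(z)\), which lies in \(\overline\D\) for almost every \(z\in\T\). Lemma~\ref{lem:disk-support} then gives
\[
 \re\bigl(\Phi(z)q_r(z)\bigr)\leq|q_r(z)|=1+r^2+2r\re z
\]
almost everywhere. Integrating over \(\T\), the right-hand side contributes \(1+r^2\), since \(\int_\T\re z\dm=0\). On the left, \(q_r(z)=z^{-1}+2r+r^2z\) yields
\[
 \int_\T\Phi q_r\dm=\widehat\Phi(1)+2r\widehat\Phi(0)+r^2\widehat\Phi(-1)=\widehat\Phi(1)+2r^2
\]
by the convention \eqref{eq:fourier-convention} and the hypotheses. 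Hence \(\re\widehat\Phi(1)+2r^2\leq1+r^2\), which is the claimed bound. This is exactly the scalar case of Theorem~\ref{thm:weighted-pairing-identity} with \(c=1\), \(b=0\), \(s=r\), \(B=0\), combined with Corollary~\ref{cor:pairing-estimate-general} and \(S=1+r^2\). I could simply cite that route, but the direct computation is short.

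For sharpness, I will take \(\Phi=\mathcal M_r|_\T\), that is, \(\Phi(z)=(r+z)/(1+rz)\). This is unimodular on \(\T\), and it is the boundary function of a holomorphic map. Expanding \(\mathcal M_r(z)=r+(1-r^2)z+O(z^2)\) gives \(\widehat\Phi(0)=r\), \(\widehat\Phi(1)=1-r^2\), and \(\widehat\Phi(-1)=0\), since negative coefficients vanish for a holomorphic function. The bound is therefore attained.

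Nothing here is a real obstacle. The only care needed is bookkeeping: the sign and index convention linking \(z^{-1}\) to \(\widehat\Phi(1)\), the identification of \(\int_\T|q_r|\dm=1+r^2\), and the observation that the pointwise inequality holds only almost everywhere. The last point is harmless for integration.
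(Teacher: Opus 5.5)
Your proposal is correct and follows the paper's argument: integrate Lemma~\ref{lem:disk-support} with $\zeta=\Phi(z)$ to get $\re\int_\T\Phi q_r\dm\leq 1+r^2$, identify the left side as $\re\widehat\Phi(1)+2r^2$, and obtain sharpness from $\mathcal M_r$ and its Taylor expansion. The only difference is that you compute the Fourier coefficients against $q_r=z^{-1}+2r+r^2z$ by hand, whereas the paper cites Theorem~\ref{thm:weighted-pairing-identity} with $c=1$, $b=0$, $a=s=r$, $B=0$; the two computations are identical.
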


\begin{proof}
Since $q_r=w_r\overline{\mathcal M_r}$ on $\T$,
Lemma~\ref{lem:disk-support} and \eqref{eq:q-modulus} give
\[
 I_r^1(\Phi)=\re\int_\T\Phi(z)q_r(z)\dm
 \leq\int_\T|q_r(z)|\dm=1+r^2.
\]
We apply Theorem~\ref{thm:weighted-pairing-identity} with $H=\R^2$,
$c=1$, $b=0$, $a=s=r$, $A=\widehat\Phi(1)$, and $B=0$. This gives
$I_r^1(\Phi)=\re\widehat\Phi(1)+2r^2$, and hence
$\re\widehat\Phi(1)\leq1-r^2$.

The boundary function
\[
 \mathcal M_r(z)=\frac{r+z}{1+rz}
\]
is unimodular on \(\T\), and
\[
 \mathcal M_r(z)=r+(1-r^2)z-r(1-r^2)z^2+r^2(1-r^2)z^3-\cdots.
\]
Thus \(\widehat{\mathcal M_r}(0)=r\), \(\widehat{\mathcal M_r}(-1)=0\), and \(\widehat{\mathcal M_r}(1)=1-r^2\). The constant is attained.
\end{proof}

The equality condition in Lemma~\ref{lem:disk-support} also determines the
boundary function.

\begin{proposition}\label{prop:disk-equality-real}
Equality in Proposition~\ref{prop:disk-coeff-real} holds if and only if
\[
 \Phi(z)=\frac{r+z}{1+rz}
\]
for almost every \(z\in\T\).
\end{proposition}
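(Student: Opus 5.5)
The plan is to read off equality from the pointwise inequality in Lemma~\ref{lem:disk-support}, integrated against \(m\). The \emph{if} direction is already contained in the sharpness part of Proposition~\ref{prop:disk-coeff-real}: the expansion there gives \(\widehat{\mathcal M_r}(0)=r\), \(\widehat{\mathcal M_r}(-1)=0\) and \(\widehat{\mathcal M_r}(1)=1-r^2\), and \(|\mathcal M_r|=1\) on \(\T\). So only the \emph{only if} direction needs an argument.

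Suppose \(\Phi\) is admissible and \(\re\widehat\Phi(1)=1-r^2\). The proof of Proposition~\ref{prop:disk-coeff-real} combined the identity
\[
\re\int_\T\Phi q_r\dm=\re\widehat\Phi(1)+2r^2
\]
with the bound by \(\int_\T|q_r|\dm=1+r^2\). Under our assumption these two quantities agree, so
\[
\int_\T\bigl(|q_r(z)|-\re(\Phi(z)q_r(z))\bigr)\dm=0.
\]
By Lemma~\ref{lem:disk-support} the integrand is nonnegative almost everywhere, because \(\Phi(z)\in\overline\D\) a.e. It therefore vanishes almost everywhere. The equality clause of the same lemma then gives, for a.e. \(z\), that \(|\Phi(z)|=1\) and that \(\Phi(z)q_r(z)\) is a nonnegative real number. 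Since \(|\Phi(z)q_r(z)|=|q_r(z)|\), this means \(\Phi(z)q_r(z)=|q_r(z)|\).

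Next I would use \(q_r=w_r\overline{\mathcal M_r}\), where \(w_r=|q_r|>0\) and \(|\mathcal M_r|=1\) on \(\T\). Dividing by \(w_r\) gives \(\Phi(z)\overline{\mathcal M_r(z)}=1\), and hence \(\Phi(z)=\mathcal M_r(z)\) for almost every \(z\in\T\).

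I do not expect a genuine obstacle. The only points to check are that \(q_r\) has no zeros on \(\T\), which is immediate since \(|q_r|\ge(1-r)^2>0\), and that equality in the integrated inequality forces pointwise equality almost everywhere. The latter holds because the integrand is a nonnegative \(L^1\) function with zero integral.
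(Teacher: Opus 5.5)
Your proposal is correct and matches the paper's argument: the nonnegative integrand $|q_r|-\re(\Phi q_r)$ has zero integral, so equality in Lemma~\ref{lem:disk-support} holds almost everywhere and $\Phi q_r=|q_r|$. The only difference is cosmetic: you recover $\Phi=\mathcal M_r$ from the factorization $q_r=w_r\overline{\mathcal M_r}$, whereas the paper computes $\overline{q_r}/|q_r|$ directly from $q_r(z)=z^{-1}(1+rz)^2$.
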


\begin{proof}
If equality holds after integration, the nonnegative function
$|q_r|-\re(\Phi q_r)$ has integral zero and therefore vanishes almost
everywhere. Equality thus holds in Lemma~\ref{lem:disk-support} almost
everywhere. Hence \(\Phi(z)q_r(z)=|q_r(z)|\) a.e., and so
\[
 \Phi(z)=\frac{\overline{q_r(z)}}{|q_r(z)|}.
\]
From \(q_r(z)=z^{-1}(1+rz)^2\) and \(|z|=1\), we get
\[
 \frac{\overline{q_r(z)}}{|q_r(z)|}
 =\frac{z(1+rz^{-1})^2}{(1+rz)(1+rz^{-1})}
 =\frac{r+z}{1+rz}.
\]
The converse follows from the extremal function already exhibited.
\end{proof}

Rotating the boundary variable gives the absolute-value form.

\begin{corollary}\label{cor:disk-coeff-abs}
Let \(0\leq r<1\). If \(\Phi\in L^\infty(\T)\), \(|\Phi|\leq1\), \(\widehat\Phi(0)=r\), and \(\widehat\Phi(-1)=0\), then
\[
 |\widehat\Phi(1)|\leq1-r^2.
\]
Equality holds if and only if, for some \(\theta\in\R\),
\[
 \Phi(e^{it})=\frac{r+e^{i\theta}e^{it}}{1+r e^{i\theta}e^{it}}
\]
for almost every \(t\).
\end{corollary}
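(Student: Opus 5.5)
The plan is to reduce the statement to Propositions~\ref{prop:disk-coeff-real} and~\ref{prop:disk-equality-real} by rotating the boundary variable, as indicated in the discussion after \eqref{eq:disk-boundary-real}. Let \(\Phi\) satisfy the hypotheses. If \(\widehat\Phi(1)=0\), the inequality is trivial. Otherwise write \(\widehat\Phi(1)=|\widehat\Phi(1)|e^{i\alpha}\) and set
\[
 \Psi(e^{it})=\Phi(e^{i(t-\alpha)}).
\]
The change of variables gives \(\widehat\Psi(n)=e^{-in\alpha}\widehat\Phi(n)\). Hence \(\widehat\Psi(0)=r\), \(\widehat\Psi(-1)=e^{i\alpha}\cdot0=0\), and \(\widehat\Psi(1)=|\widehat\Phi(1)|\). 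The bound \(|\Psi|\leq1\) also holds almost everywhere. Proposition~\ref{prop:disk-coeff-real} then gives
\[
 |\widehat\Phi(1)|=\re\widehat\Psi(1)\leq1-r^2.
\]

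For the equality case, assume \(|\widehat\Phi(1)|=1-r^2\). Since \(r<1\), this value is positive, so the rotation above is defined and \(\re\widehat\Psi(1)=1-r^2\). Proposition~\ref{prop:disk-equality-real} forces
\[
 \Psi(z)=\frac{r+z}{1+rz}\quad\text{a.e.}
\]
Undoing the rotation, \(\Phi(e^{it})=\Psi(e^{i(t+\alpha)})\), which is the stated formula with \(\theta=\alpha\).

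For the converse, take \(\Phi(e^{it})=\mathcal M_r(e^{i\theta}e^{it})\). It is unimodular. Its Fourier coefficients are \(e^{in\theta}\widehat{\mathcal M_r}(n)\), so by the expansion in the proof of Proposition~\ref{prop:disk-coeff-real} the mean is \(r\), the \((-1)\)-coefficient vanishes, and \(|\widehat\Phi(1)|=1-r^2\).

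The only point needing care is the sign convention: one must check that the chosen rotation sends \(\widehat\Phi(1)\) to a positive real number and produces the matching parameter \(\theta\). I do not expect a genuine obstacle beyond this.
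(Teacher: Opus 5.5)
Your proof is correct and is essentially the paper's argument. The paper uses the same rotated function \(\Psi\) (its angle \(\theta\) is your \(-\alpha\)), applies Propositions~\ref{prop:disk-coeff-real} and~\ref{prop:disk-equality-real} to \(\Psi\), undoes the rotation, and obtains the converse from \(\widehat\Phi(n)=e^{in\theta}\widehat{\mathcal M_r}(n)\); your separate treatment of \(\widehat\Phi(1)=0\) is harmless but unnecessary, since any angle works in that case.
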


\begin{proof}
We choose \(\theta\) so that \(e^{i\theta}\widehat\Phi(1)=|\widehat\Phi(1)|\) and set \(\Psi(e^{it})=\Phi(e^{i(t+\theta)})\). Then \(\widehat\Psi(0)=r\), \(\widehat\Psi(-1)=0\), and \(\re\widehat\Psi(1)=|\widehat\Phi(1)|\). Applying Propositions~\ref{prop:disk-coeff-real} and~\ref{prop:disk-equality-real} and then undoing the rotation gives the result.
Conversely, the displayed boundary function has
$\widehat\Phi(1)=e^{i\theta}(1-r^2)$ and attains equality.
\end{proof}

\begin{theorem}\label{thm:invariant-coeff}
Let \(\Phi\in L^\infty(\T)\), \(|\Phi|\leq1\), and \(\widehat\Phi(-1)=0\). Then
\begin{equation}\label{eq:invariant-coeff}
 |\widehat\Phi(1)|\leq1-|\widehat\Phi(0)|^2.
\end{equation}
If equality holds and \(p=\widehat\Phi(0)\), then either \(|p|=1\) and
\(\Phi=p\) almost everywhere, or \(|p|<1\) and, up to rotations of the range
and of the boundary variable, \(\Phi\) is the boundary value of the automorphism
\((r+z)/(1+rz)\), where \(r=|p|\).
\end{theorem}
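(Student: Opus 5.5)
The plan is to reduce to Corollary~\ref{cor:disk-coeff-abs} by rotating the range, after first handling the boundary case \(|p|=1\) separately. Write \(p=\widehat\Phi(0)\). Since \(|\Phi|\leq1\) almost everywhere, we have \(|p|\leq\int_\T|\Phi|\dm\leq1\).

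\emph{Case \(|p|=1\).} First show that \(\Phi=p\) almost everywhere. Indeed,
\[
 0=1-\re\bigl(\widehat\Phi(0)\overline p\bigr)=\int_\T\bigl(1-\re(\Phi\overline p)\bigr)\dm .
\]
The integrand is nonnegative, so it vanishes almost everywhere. Together with \(|\Phi|\leq1\), this forces \(\Phi=p\) almost everywhere. Then \(\widehat\Phi(1)=0=1-|p|^2\), so \eqref{eq:invariant-coeff} holds with equality. This is the first alternative of the equality statement.

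\emph{Case \(|p|<1\).} Set \(r=|p|\) and choose a unimodular \(\omega\) with \(\omega p=r\). Put \(\Psi=\omega\Phi\). Multiplying the range by \(\omega\) multiplies every Fourier coefficient by \(\omega\). Hence \(|\Psi|\leq1\), \(\widehat\Psi(0)=r\), \(\widehat\Psi(-1)=0\), and \(|\widehat\Psi(1)|=|\widehat\Phi(1)|\). Corollary~\ref{cor:disk-coeff-abs} then gives
\[
 |\widehat\Phi(1)|=|\widehat\Psi(1)|\leq1-r^2=1-|\widehat\Phi(0)|^2 .
\]
In the equality case, the same corollary says that \(\Psi(e^{it})=\mathcal M_r(e^{i\theta}e^{it})\) for some \(\theta\). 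So \(\Phi=\overline\omega\,\mathcal M_r(e^{i\theta}\,\cdot)\) is the boundary value of \((r+z)/(1+rz)\) up to a rotation of the range and a rotation of the boundary variable. Conversely, any such function satisfies the hypotheses, because both rotations preserve the vanishing of \(\widehat\Phi(-1)\). It attains equality, since \(|\widehat\Phi(1)|=1-r^2\).

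There is no substantial obstacle, because all the analytic work sits in Proposition~\ref{prop:disk-coeff-real} and Proposition~\ref{prop:disk-equality-real}. The only point that needs care is the degenerate case \(|p|=1\). There the Möbius parametrization is unavailable, and the corollary requires \(r<1\), so the strict-positivity argument above handles it instead.
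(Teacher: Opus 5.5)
Your proof is correct and follows the paper's argument. The paper also handles $|p|=1$ by a positivity argument forcing $\Phi=p$ almost everywhere: it cites equality in $|\int_\T\Phi\dm|\leq\int_\T|\Phi|\dm\leq1$, while you write out $\int_\T(1-\re(\Phi\overline p))\dm=0$. For $|p|<1$ the paper likewise multiplies the range by a unimodular constant and applies Corollary~\ref{cor:disk-coeff-abs}.
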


\begin{proof}
If \(|p|=1\), equality in
\[
 |p|=\left|\int_\T\Phi\dm\right|
 \leq\int_\T|\Phi|\dm\leq1
\]
forces \(\Phi=p\) almost everywhere. Hence all its Fourier coefficients with nonzero index vanish, and \eqref{eq:invariant-coeff} holds with equality.
We now assume that \(|p|<1\). We choose \(\theta\) so that
\(e^{-i\theta}p=r=|p|\) and apply Corollary~\ref{cor:disk-coeff-abs} to
\(e^{-i\theta}\Phi\). Multiplication by \(e^{-i\theta}\) preserves the condition \(\widehat\Phi(-1)=0\) and does not change \(|\widehat\Phi(1)|\). The equality statement follows in the same way.
\end{proof}

Theorem~\ref{thm:invariant-coeff} is a boundary version of the first Schur
coefficient estimate. Full analyticity is not assumed; the coefficients
\(\widehat\Phi(-2),\widehat\Phi(-3),\ldots\) are unrestricted.
Nevertheless, the single condition \(\widehat\Phi(-1)=0\) gives for
\(\widehat\Phi(1)\) the same best possible bound as full analyticity. When
\(|\widehat\Phi(0)|<1\), equality is stronger than the hypothesis: it forces
the boundary function to be automorphic.

\begin{theorem}\label{thm:main-disk}
For every \(p\in\D\),
\[
 M_{\mathrm{conf}}(p)=1-|p|^2.
\]
More precisely, if \(f:\D\to\D\) is harmonic, \(f(0)=p\), and \(f_{\overline z}(0)=0\), then
\[
 |f_z(0)|\leq1-|p|^2.
\]
Equality holds if and only if \(f\) is a holomorphic automorphism of \(\D\) with \(f(0)=p\).
\end{theorem}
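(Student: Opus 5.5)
The plan is to pass from $f$ to its boundary function and then apply Theorem~\ref{thm:invariant-coeff}. Since $f:\D\to\D$ is bounded and harmonic, Lemma~\ref{lem:hilbert-boundary} with $H=\R^2$ gives a unique $\Phi\in L^\infty(\T)$ with $f=P[\Phi]$. By \eqref{eq:hilbert-boundary-convex-target}, applied to the closed set $\overline\D$, we have $|\Phi|\leq1$ almost everywhere. Lemma~\ref{lem:complex-poisson} translates the hypotheses into the Fourier conditions $\widehat\Phi(0)=p$, $\widehat\Phi(-1)=f_{\overline z}(0)=0$, and it gives $f_z(0)=\widehat\Phi(1)$. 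Theorem~\ref{thm:invariant-coeff} then yields
\[
 |f_z(0)|=|\widehat\Phi(1)|\leq1-|p|^2 .
\]
The value is attained by the automorphism $z\mapsto(p+z)/(1+\overline p z)$, whose derivative at the origin is $1-|p|^2$. This proves $M_{\mathrm{conf}}(p)=1-|p|^2$.

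For the equality statement, suppose $|f_z(0)|=1-|p|^2$. Because $p=f(0)\in\D$, we have $|p|<1$, so the alternative $|p|=1$ in Theorem~\ref{thm:invariant-coeff} is excluded. That theorem, in the explicit form of Corollary~\ref{cor:disk-coeff-abs} after the range rotation $e^{-i\theta_0}p=r=|p|$, gives
\[
 \Phi(e^{it})=e^{i\theta_0}\,\frac{r+e^{i\theta}e^{it}}{1+re^{i\theta}e^{it}}\qquad\text{a.e.}
\]
This $\Phi$ is the boundary value of a holomorphic automorphism $g$ of $\D$ with $g(0)=p$. Since $g$ is bounded and harmonic, Poisson uniqueness in Lemma~\ref{lem:hilbert-boundary} gives $f=P[\Phi]=g$.

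Conversely, every holomorphic automorphism with $g(0)=p$ can be written as $g(z)=e^{i\alpha}(z+q)/(1+\overline q z)$ for some $q\in\D$ and $\alpha\in\R$. It satisfies $g_{\overline z}\equiv0$ and $|g'(0)|=1-|q|^2=1-|p|^2$, so it attains equality.

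The only delicate point is the direction from $f$ to $\Phi$: the constraint $|\Phi|\leq1$ must hold almost everywhere, and $f$ must be recovered from $\Phi$. Lemma~\ref{lem:hilbert-boundary} already handles both. The reverse direction, that $P[\Phi]$ stays inside $\D$, is not needed here, because the extremals are exhibited explicitly.
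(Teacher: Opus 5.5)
Your proposal is correct and follows the paper's proof essentially step by step. Both arguments pass to the boundary function via Lemma~\ref{lem:hilbert-boundary} and Lemma~\ref{lem:complex-poisson}, obtain the bound from Theorem~\ref{thm:invariant-coeff}, identify $\Phi$ in the equality case and conclude by Poisson uniqueness, and use the explicit holomorphic automorphisms with $|g'(0)|=1-|p|^2$ for sharpness and for the converse.
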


\begin{proof}
By Lemmas~\ref{lem:complex-poisson} and~\ref{lem:hilbert-boundary},
$f=P[\Phi]$ with $|\Phi|\leq1$, $\widehat\Phi(0)=p$,
$\widehat\Phi(-1)=0$, and $f_z(0)=\widehat\Phi(1)$.
Theorem~\ref{thm:invariant-coeff} gives the estimate. If equality holds,
that theorem identifies $\Phi$ as the boundary value of a holomorphic
automorphism; Poisson uniqueness then identifies $f$ with that
automorphism. Conversely, every holomorphic automorphism with value $p$
at the origin has $|f_z(0)|=1-|p|^2$, so the supremum is attained.
\end{proof}

Precomposition with automorphisms of $\D$ gives the pointwise form.

\begin{corollary}\label{cor:disk-pointwise}
Let \(f:\D\to\D\) be harmonic. If \(f_{\overline z}(z_0)=0\), then
\[
 |f_z(z_0)|\leq\frac{1-|f(z_0)|^2}{1-|z_0|^2}.
\]
The estimate is sharp for every \(z_0\in\D\) and every prescribed value \(f(z_0)\in\D\).
Equality holds if and only if $f$ is a holomorphic automorphism of $\D$.
\end{corollary}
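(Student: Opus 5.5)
The plan is to reduce to Theorem~\ref{thm:main-disk} by precomposing with a disk automorphism that moves \(z_0\) to the origin. Fix \(z_0\in\D\) and set
\[
 \varphi(z)=\frac{z_0+z}{1+\overline{z_0}z},\qquad g=f\circ\varphi .
\]
Since \(\varphi\) is holomorphic, \(g\) is harmonic and maps \(\D\) into \(\D\), and \(g(0)=f(z_0)\). The chain rule for a holomorphic inner map gives
\[
 g_z(0)=f_z(z_0)\varphi'(0),\qquad g_{\overline z}(0)=f_{\overline z}(z_0)\overline{\varphi'(0)}=0,
\]
with \(\varphi'(0)=1-|z_0|^2\). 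Applying Theorem~\ref{thm:main-disk} to \(g\) with \(p=f(z_0)\) yields
\[
 |f_z(z_0)|(1-|z_0|^2)\leq 1-|f(z_0)|^2,
\]
which is the stated estimate.

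For the equality statement, equality for \(f\) at \(z_0\) is equivalent to equality for \(g\) at \(0\). By Theorem~\ref{thm:main-disk}, this holds exactly when \(g\) is a holomorphic automorphism of \(\D\) with \(g(0)=f(z_0)\). Then \(f=g\circ\varphi^{-1}\) is also a holomorphic automorphism. Conversely, suppose \(f\) is a holomorphic automorphism. Then \(f_{\overline z}\equiv0\), and the Schwarz--Pick identity \(|f'(z)|=(1-|f(z)|^2)/(1-|z|^2)\) for automorphisms gives equality at every \(z_0\).

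Sharpness for arbitrary \(z_0\) and an arbitrary prescribed value \(w\in\D\) follows by taking \(f=\mathcal M\circ\varphi^{-1}\), where \(\mathcal M\) is any automorphism sending \(0\) to \(w\). This \(f\) is an automorphism with \(f(z_0)=w\), so it attains equality.

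No step is a real obstacle. The only points needing care are the chain-rule formulas for \(\partial\) and \(\overline\partial\) under holomorphic precomposition, in particular that \(g_{\overline z}(0)\) vanishes precisely because \(f_{\overline z}(z_0)=0\), and the observation that composing automorphisms stays within the automorphism group.
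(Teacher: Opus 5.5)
Your proposal is correct and matches the paper's proof: precompose with an automorphism $\varphi$ sending $0$ to $z_0$, note that $g_{\overline z}(0)=f_{\overline z}(z_0)\overline{\varphi'(0)}=0$ and $g_z(0)=f_z(z_0)\varphi'(0)$ with $|\varphi'(0)|=1-|z_0|^2$, and transfer the estimate, sharpness and equality cases from Theorem~\ref{thm:main-disk}, since composing with $\varphi^{\pm1}$ preserves holomorphic automorphisms. Your Schwarz--Pick check for the converse is a harmless extra verification; the paper reads that direction off the same theorem.
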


\begin{proof}
We choose \(\varphi\in\Aut(\D)\) with \(\varphi(0)=z_0\). Then \(|\varphi'(0)|=1-|z_0|^2\). The map \(F=f\circ\varphi\) is harmonic, \(F(0)=f(z_0)\), and
\[
 F_{\overline z}(0)=f_{\overline z}(z_0)\overline{\varphi'(0)}=0.
\]
Theorem~\ref{thm:main-disk} gives \(|F_z(0)|\leq1-|F(0)|^2\). Since \(F_z(0)=f_z(z_0)\varphi'(0)\), the estimate follows. Sharpness and the equality statement follow from
Theorem~\ref{thm:main-disk}, since precomposition by $\varphi$ preserves
the class of holomorphic automorphisms.
\end{proof}

A conformal change of variable gives the corresponding estimate on any
simply connected hyperbolic plane domain.

\begin{corollary}\label{cor:disk-hyperbolic-domain}
Let \(\Omega\subsetneq\C\) be simply connected, and let \(\lambda_\Omega(w)|dw|^2\) be its hyperbolic metric of curvature \(-1\). If \(f:\Omega\to\D\) is harmonic and \(f_{\overline w}(w_0)=0\), then
\[
 |f_w(w_0)|\leq
 \frac{\sqrt{\lambda_\Omega(w_0)}}2\bigl(1-|f(w_0)|^2\bigr).
\]
\end{corollary}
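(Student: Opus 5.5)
The plan is to transport the problem to the unit disk with a Riemann map and then apply Corollary~\ref{cor:disk-pointwise} at the corresponding point. Since \(\Omega\subsetneq\C\) is simply connected, the Riemann mapping theorem gives a conformal bijection \(\psi:\D\to\Omega\) with \(\psi(0)=w_0\). Put \(F=f\circ\psi:\D\to\D\). Harmonicity is preserved under precomposition by a holomorphic map, so \(F\) is harmonic.

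First I record the derivatives by the chain rule for Wirtinger derivatives with \(\psi\) holomorphic:
\[
 F_z(0)=f_w(w_0)\psi'(0),\qquad F_{\overline z}(0)=f_{\overline w}(w_0)\overline{\psi'(0)}.
\]
The hypothesis \(f_{\overline w}(w_0)=0\) therefore gives \(F_{\overline z}(0)=0\). Theorem~\ref{thm:main-disk}, equivalently Corollary~\ref{cor:disk-pointwise} at \(z_0=0\), then yields
\[
 |f_w(w_0)|\,|\psi'(0)|\leq1-|f(w_0)|^2.
\]

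Next I identify \(|\psi'(0)|\) through the hyperbolic density. The curvature \(-1\) metric on \(\D\) is \(\lambda_\D(z)|dz|^2\) with \(\lambda_\D(z)=4/(1-|z|^2)^2\), so \(\lambda_\D(0)=4\). Conformal invariance of the hyperbolic metric gives \(\lambda_\Omega(\psi(z))|\psi'(z)|^2=\lambda_\D(z)\). At \(z=0\) this reads \(\lambda_\Omega(w_0)|\psi'(0)|^2=4\), hence
\[
 \frac{1}{|\psi'(0)|}=\frac{\sqrt{\lambda_\Omega(w_0)}}{2}.
\]
Dividing the previous inequality by \(|\psi'(0)|>0\) gives exactly the claimed estimate.

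The only point that needs care is the normalization convention. The metric is written as \(\lambda_\Omega|dw|^2\), so \(\lambda_\Omega\) is the square of the linear density, and the factor \(\sqrt{\lambda_\Omega}/2\) reflects curvature \(-1\), namely the density \(2/(1-|z|^2)\) on \(\D\). I will state this convention explicitly. I would also check the disk case: with \(\Omega=\D\) the bound becomes \((1-|f(w_0)|^2)/(1-|w_0|^2)\), which agrees with Corollary~\ref{cor:disk-pointwise}.

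Sharpness and equality transfer in the same way. Equality holds exactly when \(F\) is a disk automorphism, that is, when \(f=\varphi\circ\psi^{-1}\) for some \(\varphi\in\Aut(\D)\), so \(f\) is a conformal map of \(\Omega\) onto \(\D\). I may remark on this, although the statement does not require it.
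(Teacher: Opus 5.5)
Your proposal is correct and follows essentially the same route as the paper: the paper also composes $f$ with a conformal map $h:\D\to\Omega$ satisfying $h(0)=w_0$. It then uses the chain-rule identities to apply Theorem~\ref{thm:main-disk} to $f\circ h$ and concludes via $\lambda_\Omega(w_0)|h'(0)|^2=4$. Your equality remark, that equality forces $f=\varphi\circ\psi^{-1}$ with $\varphi\in\Aut(\D)$, is a correct bonus that the statement does not require.
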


\begin{proof}
We choose a conformal map \(h:\D\to\Omega\) with \(h(0)=w_0\). Applying Theorem~\ref{thm:main-disk} to \(f\circ h\), we have
\[
 (f\circ h)_{\overline z}(0)=f_{\overline w}(w_0)\overline{h'(0)}=0,
 \qquad
 (f\circ h)_z(0)=f_w(w_0)h'(0).
\]
Since \(\lambda_\Omega(w_0)|h'(0)|^2=4\), the estimate follows.
\end{proof}

Conjugation gives the orientation-reversing counterpart.

\begin{remark}\label{rem:anti-holomorphic}
If \(f:\D\to\D\) is harmonic, \(f(0)=p\), and \(f_z(0)=0\), then
\(|f_{\overline z}(0)|\leq1-|p|^2\), with equality precisely for
anti-holomorphic disk automorphisms with prescribed value at the origin.
\end{remark}

\section{The unit ball and Cayley-Klein geometry}\label{sec:ball-CK}

For Hilbert balls, the supporting-hyperplane condition in
Proposition~\ref{prop:section-support-criterion} follows from the real
Cauchy-Schwarz inequality. The radius of the section determines the
extremal value, and the contact sets determine equality.

For \(x_0\in H\) and \(R>0\), write
\[
 \B_H(x_0,R)=\{x\in H:|x-x_0|<R\}.
\]
Thus $\B_H=\B_H(0,1)$. For $p\in\B_H(0,R)$ and a real two-plane
$\Lambda$, put $b=|p-\pi_\Lambda p|$. The section
$(p+\Lambda)\cap\B_H(0,R)$ has
radius \(c=\sqrt{R^2-b^2}\). Every point \(q\) of its boundary circle has
norm \(R\), and the real Cauchy-Schwarz inequality gives
\(\langle X,q\rangle\leq R^2\) for \(X\in\overline{\B_H(0,R)}\). Hence
Proposition~\ref{prop:section-support-criterion} gives
\begin{equation}\label{eq:directional-ball-value}
 M_{\B_H(0,R)}(p,\Lambda)
 =\frac{R^2-|p|^2}{\sqrt{R^2-b^2}}.
\end{equation}

\subsection{The unrestricted center bound}

We begin with the sharp center estimate without any conformality or
distortion assumption on the differential. It will be compared below with
the conformal bound and with the sharp bound under pointwise $K$-distortion.

The estimate follows directly from the Poisson derivative formula applied
to a real projection in a direction of maximal stretching.

\begin{proposition}\label{prop:hilbert-unrestricted-center}
Let \(H\) be a real Hilbert space and let \(F:\D\to\B_H\) be harmonic with \(F(0)=0\). Then
\begin{equation}\label{eq:hilbert-unrestricted-center}
 \|dF_0\|\leq\frac4\pi.
\end{equation}
The constant is sharp in every nonzero Hilbert space.
\end{proposition}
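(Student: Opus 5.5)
We reduce to a scalar problem by projecting onto a direction of maximal stretching. Let $F=P[\Phi]$ with $\Phi\in L^\infty(\T,H)$ and $|\Phi|\leq1$ almost everywhere, by Lemma~\ref{lem:hilbert-boundary}. If $dF_0=0$ there is nothing to prove. Otherwise we choose a unit vector $v\in\R^2$ with $|dF_0v|=\|dF_0\|$ and put $h=dF_0v/|dF_0v|$, a unit vector in $H$. The scalar function $u=\langle F,h\rangle$ is real harmonic with boundary function $U=\langle\Phi,h\rangle$. We have $|U|\leq1$ a.e. and $\int_\T U\dm=\langle F(0),h\rangle=0$, and its directional derivative at the origin satisfies
\[
 du_0(v)=\langle dF_0v,h\rangle=\|dF_0\|.
\]
Hence $\|dF_0\|\leq|\nabla u(0)|$.

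By \eqref{eq:grad-duality} in Lemma~\ref{lem:real-poisson},
\[
 |\nabla u(0)|=2\sup_\theta\int_\T U(e^{it})\cos(t-\theta)\dm .
\]
For each $\theta$, the constraint $|U|\leq1$ gives
\[
 \int_\T U\cos(t-\theta)\dm\leq\int_\T|\cos(t-\theta)|\dm=\frac{1}{2\pi}\int_0^{2\pi}|\cos t|\,dt=\frac2\pi .
\]
Therefore $\|dF_0\|\leq|\nabla u(0)|\leq 4/\pi$. Note that the mean-zero condition is not even needed for the bound; it only matters for the extremal.

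For sharpness, fix a unit vector $e\in H$. Take $U(e^{it})=\operatorname{sgn}\cos t$, which has mean zero, and set $F=P[U]\,e$. Then $F(0)=0$, and $F_x(0)=2\int_\T|\cos t|\dm\, e=(4/\pi)e$, so $\|dF_0\|\geq4/\pi$. It remains to check that $F$ maps into the open ball $\B_H$. Since $|U|\leq1$ and $U$ is not a.e. constant, the strict positivity of the Poisson kernel gives $|P[U](z)|<1$ for every $z\in\D$, by the same argument used after \eqref{eq:disk-boundary-abs}. This works in every nonzero Hilbert space, since only one direction $e$ is used.

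The only delicate point is justifying $\|dF_0\|\leq|\nabla u(0)|$ via the choice of $h$. This is immediate once we note that $\langle dF_0w,h\rangle=du_0(w)$ for all $w$, so $\|dF_0\|=du_0(v)\leq|\nabla u(0)|$. No real obstacle is expected.
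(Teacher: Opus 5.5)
Your proof is correct and follows the paper's argument: you project onto the unit vector $h$ in a direction of maximal stretching, apply the Poisson derivative formula to $\langle F,h\rangle$ to get $\|dF_0\|\leq 2\int_\T|\cos(t-\theta)|\,dm=4/\pi$, and prove sharpness with $P[\operatorname{sgn}(\cos t)]\,e$ and the strict positivity of the Poisson kernel. Your detour through $|\nabla u(0)|$ is harmless (the paper evaluates the directional derivative $2\int_\T U\cos(t-\theta)\,dm$ directly), and you are right that $F(0)=0$ is used only for the extremal.
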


\begin{proof}
If \(dF_0=0\), the inequality holds. Otherwise, we choose a unit vector
$\xi_0=(\cos\theta,\sin\theta)\in\R^2$ such that
\[
 |dF_0\xi_0|=\|dF_0\|,
\]
and set $h=dF_0\xi_0/\|dF_0\|$. By Lemma~\ref{lem:hilbert-boundary},
we write $F=P[\Phi]$ with $|\Phi|\leq1$ almost everywhere. Applying
Lemma~\ref{lem:real-poisson} to $\langle F,h\rangle$ gives
\[
\begin{aligned}
 \|dF_0\|
 &=2\int_\T\langle\Phi(e^{it}),h\rangle\cos(t-\theta)\dm\\
 &\leq2\int_\T|\cos(t-\theta)|\dm=\frac4\pi.
\end{aligned}
\]
For sharpness, we take a unit vector $h\in H$, set
$U(e^{it})=\operatorname{sgn}(\cos t)$, and write $u=P[U]$.
The strictly positive Poisson kernel gives positive mass to both
semicircles on which $U$ is constant, so $-1<u(z)<1$ in $\D$.
Symmetry and Lemma~\ref{lem:real-poisson} give
\[
 u(0)=u_y(0)=0,\qquad u_x(0)=\frac4\pi.
\]
Thus $F=uh$ maps $\D$ into $\B_H$, fixes zero, and attains equality.
\end{proof}

\subsection{The conformal estimate for a prescribed value}

Specializing \eqref{eq:directional-ball-value} to the unit ball gives the
following prescribed-value estimate. Its equality characterization follows
from the fact that each supporting hyperplane meets the closed ball at a
single point.

\begin{theorem}\label{thm:ball}
Let \(H\) be a real Hilbert space with \(\dim H\geq2\), and let
\(F:\D\to\B_H\) be harmonic. Suppose that \(F(0)=p\) and that \(dF_0\neq0\) is conformal. Put
\[
 \Lambda=dF_0(\R^2),
\]
let \(\pi_\Lambda:H\to\Lambda\) denote the orthogonal projection, and let \(\lambda>0\) be the conformal factor of \(dF_0\). Then
\begin{equation}\label{eq:ball-estimate}
 \lambda\leq
 \frac{1-|p|^2}{\sqrt{1-|p|^2+|\pi_\Lambda p|^2}}.
\end{equation}
The estimate is sharp. Equality holds if and only if \(F\) is a conformal or anticonformal diffeomorphism of \(\D\) onto the affine disk
\[
 (p+\Lambda)\cap\B_H.
\]
\end{theorem}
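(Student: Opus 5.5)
The plan is to reduce Theorem~\ref{thm:ball} to Proposition~\ref{prop:section-support-criterion}, applied to the unit ball with the plane \(\Lambda=dF_0(\R^2)\).

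\emph{Setting up the parameters.} Write \(p=p_\Lambda+p_\perp\) and put \(b=|p_\perp|\), \(a=|\pi_\Lambda p|\). The section \((p+\Lambda)\cap\B_H\) is the affine disk centered at \(p_\perp\) with radius
\[
c=\sqrt{1-b^2}=\sqrt{1-|p|^2+|\pi_\Lambda p|^2},
\]
since \(|p|^2=a^2+b^2\). It is already centered at the point of \(\Pi\) nearest the origin, so no translation is needed. Here \(R=1\), and \(a<c\) because \(|p|<1\).

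\emph{The estimate.} After precomposing with a rotation or reflection of \(\D\), \(F\) lies in \(\mathcal F^{\mathrm n}_{\B_H}(p,\Lambda)\); this changes neither \(\lambda\) nor the statement. As noted before the theorem, the supporting condition \eqref{eq:section-support-condition} is the real Cauchy--Schwarz inequality \(\langle X,y\rangle\le|X||y|\le1\) for \(|y|=1\). Hence \eqref{eq:section-support-value} gives
\[
\lambda\le M=\frac{1-|p|^2}{c},
\]
which is \eqref{eq:ball-estimate}. Sharpness follows because \(E_s^c\) attains this value.

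\emph{Equality cases.} By Proposition~\ref{prop:section-support-criterion}, equality holds if and only if \(\Phi_F(\zeta)\in C_{E_s^c(\zeta)}\) almost everywhere. For \(|y|=1\), the contact set is
\[
C_y=\{X\in\overline{\B_H}:\langle X,y\rangle=1\}=\{y\}.
\]
Indeed, equality in Cauchy--Schwarz together with \(|X|\le1\) forces \(X=y\), which can be checked directly from \(|X-y|^2=|X|^2-2\langle X,y\rangle+1\le0\). The uniqueness clause of the proposition then shows that the normalized extremal is exactly \(E_s^c\). Undoing the normalization, \(F=E_s^c\circ\mathcal R\) with \(\mathcal R\) a rotation, or a reflection composed with a rotation. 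Thus \(F\) is a conformal or anticonformal diffeomorphism onto the section.

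\emph{Converse.} Any such diffeomorphism \(F\) with \(F(0)=p\) is of the form \(E_s^c\circ\mathcal R\), by the description \eqref{eq:mobius-rotation-family} and its reflected counterpart. It therefore has differential norm \(c(1-s^2)=(c^2-a^2)/c=(1-|p|^2)/c\), so it attains equality.

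The only delicate point is bookkeeping. The statement defines \(\Lambda\) from \(F\), whereas Proposition~\ref{prop:section-support-criterion} fixes \(\Lambda\) first, so one must check that the equality characterization passes correctly through the normalization by \(\mathcal R\). This is handled by \eqref{eq:directional-normalized-equivalence} and the invariance of every quantity involved under \(\mathcal R\). Everything else is immediate from the earlier results.
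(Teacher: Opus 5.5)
Your proposal is correct and follows essentially the paper's own proof. Both reduce to Proposition~\ref{prop:section-support-criterion} with $R=1$ via the real Cauchy--Schwarz inequality, show that every contact set is the singleton $\{y\}$ because $|X-y|^2=|X|^2-1\leq0$, and undo the rotation or reflection of $\D$ to obtain the conformal and anticonformal parametrizations of $(p+\Lambda)\cap\B_H$, with the converse obtained from the M\"obius family \eqref{eq:mobius-rotation-family} and its reflections.
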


\begin{proof}
We write
\[
 p=p_\Lambda+p_\perp,
 \qquad p_\Lambda=\pi_\Lambda p,
 \qquad p_\perp\perp\Lambda,
\]
and set $a=|p_\Lambda|$ and $b=|p_\perp|$. The affine section
$(p+\Lambda)\cap\B_H$ is the disk centered at $p_\perp$ with radius
\[
 c=\sqrt{1-b^2}.
\]
Thus \eqref{eq:directional-ball-value}, with $R=1$, gives
\[
 \lambda\leq M_{\B_H}(p,\Lambda)
 =\frac{1-|p|^2}{c}.
\]
Since
\[
 c^2=1-b^2
 =1-|p|^2+|\pi_\Lambda p|^2,
\]
this is \eqref{eq:ball-estimate}. The M\"obius parametrization of the affine
disk through $p$ attains this value, so the estimate is sharp.

For equality, we use the normalization in
\eqref{eq:directional-normalization} and write $T=E_s^c$, where $s=a/c$.
If $|y|=1$, $|X|\leq1$, and $\langle X,y\rangle=1$, then
\[
 |X-y|^2=|X|^2+1-2\langle X,y\rangle=|X|^2-1\leq0,
\]
so $X=y$. Thus every contact set in
Proposition~\ref{prop:section-support-criterion} is a singleton.
That proposition identifies the normalized extremal with $T$.
Undoing the rotation or reflection of $\D$ gives precisely the conformal
and anticonformal parametrizations of $(p+\Lambda)\cap\B_H$.
Conversely, each such parametrization is obtained from $T$ by one of
these changes of variable and has the same differential norm at the
origin, so it attains equality.
\end{proof}

The same difference formula yields a quantitative stability estimate.

\begin{theorem}\label{thm:ball-boundary-stability}
Let \(p\in\B_H\) and let \(\Lambda\subset H\) be a real two-dimensional
subspace. With the normalization from Section~\ref{sec:target-geometry},
write
\[
 p=p_\Lambda+p_\perp
\]
and set
\[
 a=|p_\Lambda|,\qquad b=|p_\perp|,\qquad
 c=\sqrt{1-b^2},\qquad s=\frac ac.
\]
Let \(T=E_s^c\) be the normalized M\"obius parametrization of
\((p+\Lambda)\cap\B_H\), and let \(\Phi_T=T|_\T\). For every
$F=P[\Phi_F]\in\mathcal F_{\B_H}^{\mathrm n}(p,\Lambda)$, set
\[
 \delta=\|dT_0\|-\|dF_0\|\geq0.
\]
Then
\begin{equation}\label{eq:ball-deficit-identity}
\begin{aligned}
 2c\delta={}&\int_\T w_s|\Phi_F-\Phi_T|^2\dm\\
 &+\int_\T w_s(1-|\Phi_F|^2)\dm.
\end{aligned}
\end{equation}
Consequently,
\begin{equation}\label{eq:ball-boundary-stability}
 \int_\T w_s|\Phi_F-\Phi_T|^2\dm\leq2c\delta,
 \qquad
 \|\Phi_F-\Phi_T\|_{L^2}^2\leq\frac{2c\delta}{(1-s)^2}.
\end{equation}
In particular, for fixed $p$ and $\Lambda$, the boundary functions of every
normalized maximizing sequence converge in $L^2$ to $\Phi_T$.
\end{theorem}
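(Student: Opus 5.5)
Expand the squared distance pointwise and integrate against \(w_s\); the result should be exactly the difference formula \eqref{eq:mobius-exact-difference}. On \(\T\) we have \(|\Phi_T|=R=1\) here, since \(R^2=c^2+b^2=1\) for the unit ball. Hence almost everywhere
\[
 |\Phi_F-\Phi_T|^2=|\Phi_F|^2+1-2\langle\Phi_F,\Phi_T\rangle .
\]
Adding \(1-|\Phi_F|^2\) to both sides gives the pointwise identity
\[
 |\Phi_F-\Phi_T|^2+(1-|\Phi_F|^2)=2\bigl(1-\langle\Phi_F,\Phi_T\rangle\bigr).
\]
Every term is bounded, because \(|\Phi_F|\leq1\) almost everywhere by Lemma~\ref{lem:hilbert-boundary}, and \(w_s\) is bounded. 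We can therefore multiply by \(w_s\) and integrate over \(\T\). Using \(\int_\T w_s\dm=1+s^2\) and the definition \eqref{eq:weighted-total-integral}, the right-hand side becomes
\[
 2\bigl((1+s^2)-I_s^c(\Phi_F)\bigr).
\]

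Next apply Corollary~\ref{cor:mobius-extremality}, which is valid because the unit-ball section is round with radius \(c=\sqrt{1-b^2}\) and \(R=1\). Equation \eqref{eq:mobius-exact-difference} states
\[
 I_s^c(\Phi_F)-(1+s^2)=c\bigl(\|dF_0\|-\|dT_0\|\bigr)=-c\delta .
\]
Substituting gives \(2c\delta\) on the right-hand side, which proves \eqref{eq:ball-deficit-identity}. The same identity also gives \(\delta\geq0\): both integrals on the left are nonnegative, since \(w_s>0\) and \(|\Phi_F|\leq1\). This agrees with Theorem~\ref{thm:ball}.

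For \eqref{eq:ball-boundary-stability}, the first inequality follows by discarding the second integral, which is nonnegative. The second inequality uses the lower bound \(w_s\geq(1-s)^2\) on \(\T\) recorded in Section~\ref{sec:weighted-support}; this bound is positive because \(s<1\).

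For the maximizing-sequence statement, Theorem~\ref{thm:directional-paired-equivalence} together with Theorem~\ref{thm:ball} gives \(M_{\B_H}(p,\Lambda)=\|dT_0\|\). A normalized maximizing sequence therefore has \(\delta_j\to0\), and the \(L^2\) bound then yields \(\Phi_{F_j}\to\Phi_T\) in \(L^2\).

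The main point to get right is that \(T\) is exactly the normalized map appearing in Corollary~\ref{cor:mobius-extremality}, with \(R=1\) and \(|\Phi_T|\equiv1\), so that the pairing \(\langle\Phi_F,\Phi_T\rangle\) is the one defining \(I_s^c\). The rest is routine.
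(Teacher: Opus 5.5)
Your proposal is correct and follows the paper's proof essentially step for step. Like the paper, you integrate the pointwise identity $|U-V|^2+1-|U|^2=2(1-\langle U,V\rangle)$ (for $|V|=1$) against $w_s$, combine it with the difference formula of Corollary~\ref{cor:mobius-extremality} at $R=1$, and then discard the nonnegative second integral and use $w_s\geq(1-s)^2$; your observation that the identity itself already forces $\delta\geq0$ is a harmless addition.
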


\begin{proof}
Corollary~\ref{cor:mobius-extremality} and the ball equality
$I_s^c(\Phi_T)=(1+s^2)$ give
\[
 c\delta=\int_\T w_s(1-\langle\Phi_F,\Phi_T\rangle)\dm.
\]
For $U,V\in H$ with $|V|=1$,
\[
 |U-V|^2+1-|U|^2=2(1-\langle U,V\rangle).
\]
Since $|\Phi_T|=1$ almost everywhere, applying this identity to
$U=\Phi_F$ and $V=\Phi_T$ gives \eqref{eq:ball-deficit-identity}.
Both terms on the right are nonnegative because $|\Phi_F|\leq1$ and
$w_s>0$. Discarding the second term and using
$w_s\geq(1-s)^2$ gives \eqref{eq:ball-boundary-stability}. The final
assertion follows because $\delta\to0$ along every normalized maximizing
sequence.
\end{proof}

At the center, Theorem~\ref{thm:ball} gives the sharp bound
\[
 \|dF_0\|\leq1
\]
whenever \(dF_0\neq0\) is conformal. Proposition~\ref{prop:hilbert-unrestricted-center} gives \(4/\pi\) without this hypothesis. Section~\ref{sec:center-distortion} determines the sharp constants between these two regimes for maps that are \(K\)-quasiconformal at the origin in the pointwise sense.

Precomposition with automorphisms of $\D$ gives the corresponding
pointwise estimate.

\begin{corollary}\label{cor:ball-pointwise-hilbert}
Let \(H\) be a real Hilbert space with \(\dim H\geq2\), and let
\(F:\D\to\B_H\) be harmonic. Suppose that \(dF_{z_0}\neq0\) is conformal, let
\[
 p=F(z_0),\qquad \Lambda=dF_{z_0}(\R^2),
\]
and let \(\lambda_{z_0}\) be the conformal factor. Then
\begin{equation}\label{eq:ball-pointwise}
 \lambda_{z_0}\leq
 \frac{1-|p|^2}
 {(1-|z_0|^2)\sqrt{1-|p|^2+|\pi_\Lambda p|^2}}.
\end{equation}
Equality holds if and only if \(F\) is a conformal or anticonformal diffeomorphism of \(\D\) onto \((p+\Lambda)\cap\B_H\).
\end{corollary}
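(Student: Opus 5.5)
The plan is to reduce Corollary~\ref{cor:ball-pointwise-hilbert} to Theorem~\ref{thm:ball} by precomposing with a disk automorphism, following the proof of Corollary~\ref{cor:disk-pointwise}. Choose $\varphi\in\Aut(\D)$ with $\varphi(0)=z_0$, for instance $\varphi(z)=(z_0+z)/(1+\overline{z_0}z)$, so that $|\varphi'(0)|=1-|z_0|^2$. Set $G=F\circ\varphi$. Then $G:\D\to\B_H$ is harmonic, since harmonicity in the plane is preserved under precomposition by holomorphic maps. This holds for Hilbert-valued maps by testing against vectors $h\in H$. Moreover, $G(0)=p$.

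The key step is the chain rule at the origin. We have $dG_0=dF_{z_0}\circ d\varphi_0$, and $d\varphi_0$ is a rotation composed with dilation by $|\varphi'(0)|$. Hence $dG_0$ is nonzero and conformal with conformal factor $\lambda_{z_0}(1-|z_0|^2)$. Its image plane is $dF_{z_0}(\R^2)=\Lambda$, since $d\varphi_0$ is a bijection of $\R^2$. Theorem~\ref{thm:ball} applied to $G$ with the same $p$ and $\Lambda$ then gives
\[
 \lambda_{z_0}(1-|z_0|^2)\leq\frac{1-|p|^2}{\sqrt{1-|p|^2+|\pi_\Lambda p|^2}},
\]
which is \eqref{eq:ball-pointwise} after division by $1-|z_0|^2$.

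For equality, equality for $F$ at $z_0$ is equivalent to equality for $G$ at $0$. By Theorem~\ref{thm:ball}, this holds if and only if $G$ is a conformal or anticonformal diffeomorphism of $\D$ onto $(p+\Lambda)\cap\B_H$. Since $\varphi$ is a conformal automorphism, $G$ has this property exactly when $F=G\circ\varphi^{-1}$ does. Conversely, if $F$ is such a diffeomorphism, then so is $G$, and equality follows.

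The only point needing care, and the mild obstacle, is checking that conformality and the image plane transfer correctly through $d\varphi_0$, which may rotate. This is harmless because Theorem~\ref{thm:ball} depends only on the plane $\Lambda$ and the conformal factor, not on any normalization of the basis.
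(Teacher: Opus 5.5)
Your proposal is correct and follows the paper's proof: precompose with an automorphism taking $0$ to $z_0$, observe that the differential at the origin is conformal with factor $(1-|z_0|^2)\lambda_{z_0}$ and has the same image plane $\Lambda$, apply Theorem~\ref{thm:ball}, and transfer the equality statement through the automorphism. Your explicit check of the chain rule, and of the fact that the class of conformal or anticonformal parametrizations of the affine disk is invariant under precomposition by $\varphi$, simply expands the paper's one-line argument.
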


\begin{proof}
We choose \(\psi\in\Aut(\D)\) with \(\psi(0)=z_0\). The map \(F\circ\psi\) is harmonic, its differential at the origin is conformal with factor \((1-|z_0|^2)\lambda_{z_0}\), and it has the same image plane \(\Lambda\). Applying Theorem~\ref{thm:ball} gives the estimate, and the equality statement is preserved under precomposition by \(\psi\).
\end{proof}

In finite dimensions, writing $\B^n=\B_{\R^n}$,
Corollary~\ref{cor:ball-pointwise-hilbert} recovers the estimate and
equality cases of \cite{ForstnericKalaj}. We record the resulting formula
in terms of the angle between $F(z_0)$ and the image plane of the
differential.

\begin{corollary}\label{cor:finite-dimensional-ball}
Let \(n\geq2\), let \(F:\D\to\B^n\) be harmonic, and suppose that
\(dF_{z_0}\neq0\) is conformal. If \(F(z_0)\neq0\), let \(\theta\in[0,\pi/2]\) be the angle between \(F(z_0)\) and \(\Lambda=dF_{z_0}(\R^2)\); when \(F(z_0)=0\), let \(\theta\) be arbitrary. Then
\[
 \|dF_{z_0}\|\leq
 \frac{1-|F(z_0)|^2}
 {(1-|z_0|^2)\sqrt{1-|F(z_0)|^2\sin^2\theta}}.
\]
Equality holds if and only if \(F\) is a conformal or anticonformal diffeomorphism of \(\D\) onto the affine disk \((F(z_0)+\Lambda)\cap\B^n\).
\end{corollary}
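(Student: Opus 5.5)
The plan is to deduce Corollary~\ref{cor:finite-dimensional-ball} directly from Corollary~\ref{cor:ball-pointwise-hilbert} with $H=\R^n$. The only work is to rewrite the denominator $\sqrt{1-|p|^2+|\pi_\Lambda p|^2}$ in terms of the angle $\theta$. Here we write $p=F(z_0)$ and use that $\|dF_{z_0}\|$ is the conformal factor $\lambda_{z_0}$.

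First, the case $p\neq0$. By definition of the angle between a vector and a subspace, the orthogonal projection satisfies $|\pi_\Lambda p|=|p|\cos\theta$. Its orthogonal complement component therefore has norm $|p-\pi_\Lambda p|=|p|\sin\theta$. Hence
\[
 1-|p|^2+|\pi_\Lambda p|^2=1-|p|^2\bigl(1-\cos^2\theta\bigr)=1-|p|^2\sin^2\theta .
\]
Substituting into \eqref{eq:ball-pointwise} gives the displayed bound.

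Second, the case $p=0$. Both sides of the identity reduce to $1$ for every $\theta$, so the arbitrary choice of $\theta$ is harmless and the bound reads $\|dF_{z_0}\|\le (1-|z_0|^2)^{-1}$. This agrees with \eqref{eq:ball-pointwise}.

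The equality statement transfers verbatim from Corollary~\ref{cor:ball-pointwise-hilbert}, since the right-hand sides coincide. Conformal or anticonformal diffeomorphisms onto $(F(z_0)+\Lambda)\cap\B^n$ are exactly the equality cases there.

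The only point needing care, and the main potential obstacle, is the convention for the angle in the boundary cases. When $\theta=\pi/2$ we have $p\perp\Lambda$ and $\pi_\Lambda p=0$. When $\theta=0$ we have $p\in\Lambda$. Both cases are covered by the relation $|\pi_\Lambda p|=|p|\cos\theta$ with $\theta\in[0,\pi/2]$, which I will state explicitly as the definition used.
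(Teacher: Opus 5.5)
Your proposal is correct and follows the paper's own proof: both apply Corollary~\ref{cor:ball-pointwise-hilbert} with $H=\R^n$ and rewrite the denominator using $|\pi_\Lambda F(z_0)|=|F(z_0)|\cos\theta$, and the equality cases carry over unchanged. Your separate handling of the case $F(z_0)=0$ and your explicit statement of the angle convention are harmless additions that the paper leaves implicit.
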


\begin{proof}
Since \(|\pi_\Lambda F(z_0)|=|F(z_0)|\cos\theta\), the denominator in \eqref{eq:ball-pointwise} is
\[
 \sqrt{1-|F(z_0)|^2+|\pi_\Lambda F(z_0)|^2}
 =\sqrt{1-|F(z_0)|^2\sin^2\theta}.
\]
The result therefore follows from Corollary~\ref{cor:ball-pointwise-hilbert}.
\end{proof}

Writing $p=F(z_0)$, in dimension two one has $\Lambda=\R^2$, and the
estimate becomes
\[
 \|dF_{z_0}\|\leq\frac{1-|p|^2}{1-|z_0|^2}.
\]
If \(n\geq3\) and \(\Lambda\perp p\), it becomes
\[
 \|dF_{z_0}\|\leq\frac{\sqrt{1-|p|^2}}{1-|z_0|^2}.
\]
The quantity
\[
 c=\sqrt{1-|p|^2+|\pi_\Lambda p|^2}
\]
is exactly the radius of the affine disk \((p+\Lambda)\cap\B^n\). The
following example first realizes equality in an \(L^2\) space and then
gives a separate harmonic map whose range is not contained in any
finite-dimensional subspace.

\begin{example}\label{ex:L2-ball}
Let
\[
 H=L^2(\T,m;\R),
\]
and define
\[
 e_0(e^{it})=1,\qquad
 e_1(e^{it})=\sqrt2\cos t,\qquad
 e_2(e^{it})=\sqrt2\sin t.
\]
These functions form an orthonormal triple in \(H\). We choose \(a\geq0\) and \(b_0\in\R\) with \(a^2+b_0^2<1\), and set
\[
 c=\sqrt{1-b_0^2},\qquad s=\frac ac.
\]
Then \(0\leq s<1\). Using the M\"obius automorphism \(\mathcal M_s\) from Section~\ref{sec:weighted-support}, define
\begin{equation}\label{eq:L2-extremal}
 F(z)=b_0 e_0+c\bigl(\re\mathcal M_s(z)\,e_1+\operatorname{Im}\mathcal M_s(z)\,e_2\bigr).
\end{equation}
The map \(F:\D\to H\) is harmonic. Since \(e_0,e_1,e_2\) are orthonormal and \(|\mathcal M_s(z)|<1\),
\[
 |F(z)|^2=b_0^2+c^2|\mathcal M_s(z)|^2< b_0^2+c^2=1,
\]
so \(F(\D)\subset\B_H\). Moreover,
\[
 F(0)=b_0 e_0+a e_1,
\]
and, because \(\mathcal M_s'(0)=1-s^2\),
\[
 F_x(0)=c(1-s^2)e_1,\qquad
 F_y(0)=c(1-s^2)e_2.
\]
Thus \(dF_0\) is conformal with image plane \(\Lambda=\operatorname{span}\{e_1,e_2\}\) and conformal factor
\[
 c(1-s^2)=\frac{1-a^2-b_0^2}{\sqrt{1-b_0^2}}.
\]
Since \(\pi_\Lambda F(0)=a e_1\), the right-hand side of
\eqref{eq:ball-estimate} has the same value. Hence equality holds in
\eqref{eq:ball-estimate}. Because \(\mathcal M_s\) is an automorphism of
\(\D\), the map \(F\) is a conformal diffeomorphism of \(\D\) onto the
affine disk
\[
 \bigl(F(0)+\Lambda\bigr)\cap\B_H
 =\bigl(b_0 e_0+\Lambda\bigr)\cap\B_H.
\]

The class covered by Theorem~\ref{thm:ball} is not restricted to maps with finite-dimensional range. We choose an orthonormal sequence \((u_n)_{n\geq2}\) in \(\operatorname{span}\{e_1,e_2\}^\perp\), a number \(\lambda_\infty>0\), and positive numbers \(\gamma_n\) such that
\[
 \lambda_\infty^2+\sum_{n=2}^\infty\gamma_n^2<1.
\]
The series
\begin{equation}\label{eq:L2-infinite-range}
 F_\infty(z)=\lambda_\infty\bigl(\re z\,e_1+\operatorname{Im}z\,e_2\bigr)
 +\sum_{n=2}^\infty\gamma_n\re(z^n)u_n
\end{equation}
converges together with all derivatives uniformly on compact subsets of \(\D\), and hence defines an \(H\)-valued harmonic map. Indeed, fix an integer \(q\geq0\) and \(0<r<1\). The norm of any derivative of order \(q\) of the \(n\)-th summand on \(|z|\leq r\) is at most \(\gamma_n n^q r^{n-q}\) when \(n\geq q\), and it vanishes when \(n<q\). Hence, by Cauchy-Schwarz,
\[
 \sum_{n=\max\{2,q\}}^\infty \gamma_n n^q r^{n-q}
 \leq
 \left(\sum_{n=2}^\infty\gamma_n^2\right)^{1/2}
 \left(\sum_{n=\max\{2,q\}}^\infty n^{2q}r^{2n-2q}\right)^{1/2}
 <\infty.
\]
Thus every differentiated series converges uniformly on compact subsets of \(\D\). Orthogonality gives
\[
 \begin{aligned}
 |F_\infty(z)|^2
 &=\lambda_\infty^2|z|^2+\sum_{n=2}^\infty\gamma_n^2\bigl(\re(z^n)\bigr)^2\\
 &\leq\lambda_\infty^2|z|^2+\sum_{n=2}^\infty\gamma_n^2|z|^{2n}<1.
 \end{aligned}
\]
Moreover, \((F_\infty)_x(0)=\lambda_\infty e_1\) and
\((F_\infty)_y(0)=\lambda_\infty e_2\), so \(d(F_\infty)_0\) is conformal with
factor \(\lambda_\infty\). The range of \(F_\infty\) is not contained in any
finite-dimensional subspace. Indeed, if \(F_\infty(\D)\subset V\) for a
finite-dimensional subspace \(V\subset H\), then its restriction to the real
interval would give
\[
 \left.\frac{d^n}{dr^n}F_\infty(r)\right|_{r=0}
 =n!\gamma_n u_n\in V,
 \qquad n\geq2,
\]
which is impossible. Thus the Hilbert-ball theorem applies to harmonic maps with infinite-dimensional range. Its equality statement explains why every mapping attaining equality nevertheless has its range in an affine two-dimensional disk.
\end{example}

\subsection{A dimension-free infinitesimal Klein norm}

For a real Hilbert space \(H\), \(p\in\B_H\), and \(v\in H\), we define
\begin{equation}\label{eq:CK-def}
 \CK_{\B_H}(p;v)=
 \frac{\sqrt{(1-|p|^2)|v|^2+\langle p,v\rangle^2}}
 {1-|p|^2}.
\end{equation}
In finite dimensions, \eqref{eq:CK-def} is the infinitesimal norm of the
Beltrami-Klein model of real hyperbolic space. We use the same formula in an
arbitrary real Hilbert space. It is not the conformal Poincar\'e norm; for
the disk we write
\begin{equation}\label{eq:PD-def}
 P_\D(z;v)=\frac{|v|}{1-|z|^2}
\end{equation}
for the hyperbolic norm in the Schwarz-Pick normalization, and
\begin{equation}\label{eq:poincare-curv-minus-one}
 ds_P(z;v)=\frac{2|v|}{1-|z|^2}=2P_\D(z;v)
\end{equation}
for the Poincar\'e metric of curvature \(-1\).

We first determine the largest value of the Klein norm on a fixed subspace.

\begin{lemma}\label{lem:CK-plane}
Let \(H\) be a real Hilbert space, let \(p\in\B_H\), and let \(\Lambda\subset H\) be a nonzero closed linear subspace. Then, for every \(v\in\Lambda\),
\begin{equation}\label{eq:CK-plane-bound}
 \CK_{\B_H}(p;v)\leq
 \frac{\sqrt{1-|p|^2+|\pi_\Lambda p|^2}}
 {1-|p|^2}|v|.
\end{equation}
The constant is the largest value of \(\CK_{\B_H}(p;v)\) over Euclidean unit vectors \(v\in\Lambda\). If \(\pi_\Lambda p\neq0\), equality for a nonzero vector \(v\in\Lambda\) holds precisely when \(v\) is parallel to \(\pi_\Lambda p\); if \(\pi_\Lambda p=0\), equality holds for every \(v\in\Lambda\).
\end{lemma}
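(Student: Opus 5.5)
The plan is to reduce the bound to a one-line Cauchy-Schwarz estimate inside \(\Lambda\). Fix \(v\in\Lambda\). Since \(\pi_\Lambda\) is the orthogonal projection onto the closed subspace \(\Lambda\) and \(v\in\Lambda\), we have
\[
 \langle p,v\rangle=\langle \pi_\Lambda p,v\rangle .
\]
The real Cauchy-Schwarz inequality then gives \(\langle p,v\rangle^2\leq|\pi_\Lambda p|^2|v|^2\). Substituting this into the numerator of \eqref{eq:CK-def} yields
\[
 (1-|p|^2)|v|^2+\langle p,v\rangle^2\leq\bigl(1-|p|^2+|\pi_\Lambda p|^2\bigr)|v|^2 .
\]
Taking square roots and dividing by \(1-|p|^2>0\), which is valid because \(p\in\B_H\), gives \eqref{eq:CK-plane-bound}.

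For the equality statement, the only inequality used is Cauchy-Schwarz, so equality for a nonzero \(v\) holds exactly when \(\langle\pi_\Lambda p,v\rangle^2=|\pi_\Lambda p|^2|v|^2\).
\begin{itemize}
 \item If \(\pi_\Lambda p\neq0\), this holds precisely when \(v\) is a real multiple of \(\pi_\Lambda p\), that is, when \(v\) is parallel to \(\pi_\Lambda p\). The signs \(\pm\) are both allowed, since only the square \(\langle p,v\rangle^2\) enters.
 \item If \(\pi_\Lambda p=0\), then \(\langle p,v\rangle=0\) for every \(v\in\Lambda\). Hence \(\CK_{\B_H}(p;v)=|v|/\sqrt{1-|p|^2}\), and this agrees with the right-hand side of \eqref{eq:CK-plane-bound}, so equality holds identically on \(\Lambda\).
\end{itemize}

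It remains to show that the constant is the largest value over Euclidean unit vectors of \(\Lambda\), i.e.\ that it is attained. Since \(\Lambda\neq\{0\}\), a unit vector in \(\Lambda\) always exists. If \(\pi_\Lambda p\neq0\), take \(v=\pi_\Lambda p/|\pi_\Lambda p|\in\Lambda\), which attains equality by the case above. If \(\pi_\Lambda p=0\), any unit vector in \(\Lambda\) attains it.

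There is no real obstacle here. The only point needing care is that \(\Lambda\) is closed, so that the orthogonal projection \(\pi_\Lambda\) exists and the identity \(\langle p,v\rangle=\langle\pi_\Lambda p,v\rangle\) holds.
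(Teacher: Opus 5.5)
Your proof is correct and follows the paper's argument: for $v\in\Lambda$ you use $\langle p,v\rangle=\langle\pi_\Lambda p,v\rangle$, apply the real Cauchy--Schwarz inequality, substitute into the definition of $\CK_{\B_H}$, and read off the equality cases from Cauchy--Schwarz. You also make explicit that the constant is attained, using $v=\pi_\Lambda p/|\pi_\Lambda p|$, or any unit vector of $\Lambda$ when the projection vanishes; the paper leaves this attainment step implicit.
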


\begin{proof}
For \(v\in\Lambda\),
\[
 \langle p,v\rangle=\langle\pi_\Lambda p,v\rangle.
\]
Cauchy-Schwarz gives \(|\langle p,v\rangle|\leq|\pi_\Lambda p|\,|v|\), and substitution in \eqref{eq:CK-def} proves the estimate and its equality statement.
\end{proof}

In the applications below, \(\Lambda=dF_{z_0}(\R^2)\) is two-dimensional
under either the conformality hypothesis or the pointwise distortion
condition. The lemma is stated for an arbitrary nonzero closed subspace
because its proof does not use the dimension of \(\Lambda\).

If $\dim H\geq2$, the norm \eqref{eq:CK-def} is anisotropic at every
$p\neq0$. On Euclidean unit vectors orthogonal to $p$ it equals
$1/\sqrt{1-|p|^2}$, while in the radial direction it equals
$1/(1-|p|^2)$. Lemma~\ref{lem:CK-plane} records its largest value on the
image plane of the differential.

\subsection{A Cayley-Klein contraction under pointwise distortion}

We first prove the contraction under the pointwise $K$-distortion
condition. The proof uses the M\"obius-weighted pairing; the rotation
parameter is optimized before the two complex derivatives are compared.

\begin{theorem}
\label{thm:qc-CK-infinitesimal}
Let $H$ be a real Hilbert space, let $K\geq1$, let
$F:\D\to\B_H$ be harmonic, and let $z_0\in\D$. If $F$ is
$K$-quasiconformal at $z_0$ in the pointwise sense, then, for every
$v\in T_{z_0}\D$,
\begin{equation}\label{eq:qc-CK-infinitesimal}
 \CK_{\B_H}\bigl(F(z_0);dF_{z_0}v\bigr)
 \leq K\,P_\D(z_0;v).
\end{equation}
More precisely, put $p=F(z_0)$, $\Lambda=dF_{z_0}(\R^2)$, and
\[
 c=\sqrt{1-|p|^2+|\pi_\Lambda p|^2},\qquad
 s=\frac{|\pi_\Lambda p|}{c}.
\]
Then $0\leq s<1$ and
\begin{equation}\label{eq:qc-CK-refined}
 \CK_{\B_H}(p;dF_{z_0}v)
 \leq\frac{2K}{K+1-(K-1)s^2}\,P_\D(z_0;v).
\end{equation}
\end{theorem}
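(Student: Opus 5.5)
The plan is to reduce to $z_0=0$, apply the M\"obius-weighted pairing to the $\Lambda$-coordinate of the boundary function, and convert the resulting coefficient bound into a bound on $L_F(0)$ using the distortion hypothesis. First I reduce to $z_0=0$. Choose $\psi\in\Aut(\D)$ with $\psi(0)=z_0$ and set $G=F\circ\psi$. Then $G$ is harmonic into $\B_H$, $G(0)=p$, and $dG_0=dF_{z_0}\circ d\psi_0$. Since $d\psi_0$ is a similarity of ratio $1-|z_0|^2$, the image plane $\Lambda$ and the ratio $L/\ell$ are unchanged. Because $P_\D(z_0;d\psi_0 w)=|w|$, it suffices to prove, at the origin,
\[
 \CK_{\B_H}(p;dF_0v)\leq\frac{2K}{K+1-(K-1)s^2}\,|v|.
\]
The uniform estimate \eqref{eq:qc-CK-infinitesimal} then follows, since $K+1-(K-1)s^2\geq2$ for $0\leq s<1$. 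The inequality $s<1$ is the statement $a=|\pi_\Lambda p|<c$, which is equivalent to $|p|<1$.

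By Lemma~\ref{lem:CK-plane}, $\CK_{\B_H}(p;dF_0v)\leq \frac{c}{1-|p|^2}L_F(0)|v|$. Moreover $1-|p|^2=c^2-a^2=c^2(1-s^2)$. Hence the goal becomes
\[
 L_F(0)\leq\frac{2K\,c(1-s^2)}{K+1-(K-1)s^2}.
\]
To prove this, write $p=p_\Lambda+p_\perp$, choose $e_1$ along $p_\Lambda$, and let $\Phi=(\xi,\eta)$ be the boundary function from Lemma~\ref{lem:hilbert-boundary}, with $|\Phi|\leq1$. Let $\phi$ be the complex coordinate of $\xi$, and set $A=\widehat\phi(1)$, $B=\widehat\phi(-1)$. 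Since $F_x(0),F_y(0)\in\Lambda$, the normal part satisfies \eqref{eq:general-orthogonal-identities}. By Lemma~\ref{lem:operator-norm} applied to the $\Lambda$-coordinate, $L_F(0)=|A|+|B|$ and $\ell_F(0)=\bigl||A|-|B|\bigr|$.

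Precomposing with a rotation multiplies $A$ by $e^{i\theta}$ and $B$ by $e^{-i\theta}$. Precomposing with a reflection swaps $A$ and $B$, so I may assume $|A|\geq|B|$. Neither operation changes $p$ or $\Lambda$. Theorem~\ref{thm:weighted-pairing-identity} gives
\[
 I_s^c(\Phi)=c\re(Ae^{i\theta}+s^2\overline{\overline{B}}e^{-i\theta})+2csa+(1+s^2)b^2 .
\]
Cauchy--Schwarz with $|E_s^c|=1$ on $\T$ (here $R=1$) gives $I_s^c\leq 1+s^2$. Using $a=cs$ and $b^2=1-c^2$, this becomes
\[
 \re\bigl((A+s^2\overline B)e^{i\theta}\bigr)\leq c(1-s^2)
\]
for every $\theta$. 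Optimizing over the rotation parameter first yields $|A+s^2\overline B|\leq c(1-s^2)$, and therefore $|A|-s^2|B|\leq c(1-s^2)$.

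Finally I compare the two complex derivatives. Write $|A|=(L+\ell)/2$ and $|B|=(L-\ell)/2$. Using $\ell\geq L/K$,
\[
 |A|-s^2|B|=\tfrac12\bigl[(1-s^2)L+(1+s^2)\ell\bigr]\geq \frac{L}{2K}\bigl(K+1-(K-1)s^2\bigr),
\]
which gives the required bound on $L$. The step I expect to be delicate is this phase handling. A rotation cannot align both $A$ and $B$ simultaneously, so one must pass to $|A+s^2\overline B|$ and use the reverse triangle inequality with the dominant coefficient. For that, the reflection normalization $|A|\geq|B|$ is essential, and one must check that it preserves the choice of $e_1$ along $p_\Lambda$, the value $p$, and the plane $\Lambda$.
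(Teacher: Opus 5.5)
Your proof is correct and follows the paper's argument essentially step for step: reduction to the origin by a disk automorphism, the reflection normalization $|A|\geq|B|$, the bound $I_s^c(\Phi_\theta)\leq 1+s^2$ from Cauchy--Schwarz, maximization over the rotation parameter to obtain $c|A+s^2\overline B|\leq 1-|p|^2$, the reverse triangle inequality combined with $\ell_F(0)\geq L_F(0)/K$, and finally Lemma~\ref{lem:CK-plane}. The differences are only cosmetic. You invoke Lemma~\ref{lem:CK-plane} before bounding $L_F(0)$ rather than after, via $1-|p|^2=c^2(1-s^2)$, and the term $\overline{\overline{B}}$ in your displayed pairing formula is simply $B$.
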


\begin{proof}
We first prove the estimate at the origin. We put $p=F(0)$ and
$\Lambda=dF_0(\R^2)$. Since $F$ is $K$-quasiconformal at $0$, the plane
$\Lambda$ is two-dimensional. We write
\[
 p=p_\Lambda+p_\perp,
 \qquad p_\Lambda=ae_1,
 \qquad a=|p_\Lambda|,
 \qquad b=|p_\perp|,
\]
where $(e_1,e_2)$ is an orthonormal basis of $\Lambda$, with $e_1$ chosen
arbitrarily when $a=0$. The affine section $(p+\Lambda)\cap\B_H$ is centered
at $p_\perp$ and has radius $c=\sqrt{1-b^2}$. We set $s=a/c$. Since
$p\in\B_H$, one has $0\leq s<1$.

We denote by $f_\Lambda$ the complex coordinate of the $\Lambda$-component of
$F$ and write
\[
 d(f_\Lambda)_0(z)=Az+B\overline z.
\]
Precomposing with the reflection $z\mapsto\overline z$, if necessary,
interchanges $A$ and $B$ without changing $p$, $\Lambda$, or the two
stretchings. We may therefore assume that $|A|\geq|B|$. Since $dF_0$ takes
values in $\Lambda$, Lemma~\ref{lem:operator-norm} gives
\begin{equation}\label{eq:qc-AB-stretchings}
 L_F(0)=|A|+|B|,
 \qquad
 \ell_F(0)=|A|-|B|.
\end{equation}

For $\theta\in\R$, define $F_\theta(z)=F(e^{i\theta}z)$. The two
coefficients of the differential of its $\Lambda$-component are
$e^{i\theta}A$ and $e^{-i\theta}B$. We denote by $\Phi_\theta$ the Poisson
boundary function of $F_\theta$. Since $d(F_\theta)_0(\R^2)=\Lambda$, the
normal component has mean $p_\perp$, and its first derivatives at the origin
vanish. Moreover,
$|\Phi_\theta|\leq1$ almost everywhere and $|E_s^c|=1$ on $\T$. Hence the
real Cauchy-Schwarz inequality gives
\[
 I_s^c(\Phi_\theta)\leq1+s^2.
\]
Theorem~\ref{thm:weighted-pairing-identity} now yields
\[
 c\re\bigl(e^{i\theta}A+s^2e^{-i\theta}B\bigr)
 +2csa+(1+s^2)b^2\leq1+s^2.
\]
The identities $cs=a$, $c^2=1-b^2$, and $|p|^2=a^2+b^2$ reduce this to
\begin{equation}\label{eq:qc-rotated-pairing}
 c\re\bigl(e^{i\theta}A+s^2e^{-i\theta}B\bigr)
 \leq1-|p|^2.
\end{equation}
Since
\[
 \re\bigl(e^{i\theta}A+s^2e^{-i\theta}B\bigr)
 =\re\bigl(e^{i\theta}(A+s^2\overline B)\bigr),
\]
taking the maximum over $\theta$ in \eqref{eq:qc-rotated-pairing} gives
\[
 c|A+s^2\overline B|\leq1-|p|^2.
\]
We write $L=L_F(0)$ and $\ell=\ell_F(0)$. Then
\[
\begin{aligned}
 |A+s^2\overline B|
 &\geq |A|-s^2|B|\\
 &=\frac{(1-s^2)L+(1+s^2)\ell}{2}\\
 &\geq\frac{K+1-(K-1)s^2}{2K}\,L.
\end{aligned}
\]
Consequently,
\begin{equation}\label{eq:qc-euclidean-ball-bound}
 L_F(0)\leq
 \frac{2K(1-|p|^2)}{c\bigl(K+1-(K-1)s^2\bigr)}.
\end{equation}
No simultaneous phase normalization of $A$ and $B$ has been used; the
direction $e_1$ is fixed by $p_\Lambda$.

For $y\in\Lambda$, Lemma~\ref{lem:CK-plane} and the identity
$c^2=1-|p|^2+|\pi_\Lambda p|^2$ give
\[
 \CK_{\B_H}(p;y)\leq\frac{c}{1-|p|^2}|y|.
\]
Applying this estimate to $y=dF_0v$ and using
\eqref{eq:qc-euclidean-ball-bound}, we conclude that
\[
 \CK_{\B_H}(p;dF_0v)
 \leq\frac{c}{1-|p|^2}L_F(0)|v|
 \leq\frac{2K}{K+1-(K-1)s^2}|v|.
\]

For a general $z_0\in\D$, we choose $\psi\in\Aut(\D)$ with
$\psi(0)=z_0$ and apply the estimate at the origin to $F\circ\psi$.
Conformal precomposition preserves the pointwise distortion ratio, and
$|\psi'(0)|=1-|z_0|^2$. For $v\in T_{z_0}\D$, we set
$w=(d\psi_0)^{-1}v$. Then $|w|=|v|/(1-|z_0|^2)$, and the estimate for
$F\circ\psi$ gives
\[
 \CK_{\B_H}\bigl(F(z_0);dF_{z_0}v\bigr)
 \leq\frac{2K}{K+1-(K-1)s^2}\frac{|v|}{1-|z_0|^2}.
\]
This proves \eqref{eq:qc-CK-refined}. Since
$K+1-(K-1)s^2\geq2$, it implies
\eqref{eq:qc-CK-infinitesimal}. For $K>1$ the refined factor is
strictly smaller than $K$, because $s<1$.
\end{proof}

For $p,q\in\B_H$, define the path distance induced by \eqref{eq:CK-def} by
\begin{equation}\label{eq:CK-path-distance}
 \CK_{\B_H}(p,q)
 =\inf_\gamma\int_0^1
 \CK_{\B_H}\bigl(\gamma(t);\gamma'(t)\bigr)\,dt,
\end{equation}
where the infimum is taken over all piecewise $C^1$ curves
$\gamma:[0,1]\to\B_H$ joining $p$ to $q$. We use the normalization
\begin{equation}\label{eq:HarD-def}
 d_\D(z_0,z)
 =\inf_\gamma\int_0^1
 P_\D\bigl(\gamma(t);\gamma'(t)\bigr)\,dt
\end{equation}
for the distance induced by $P_\D$, with the infimum taken over piecewise $C^1$
curves joining $z_0$ to $z$. Equivalently,
\[
 d_\D(z_0,z)
 =\operatorname{arctanh}
 \left|\frac{z-z_0}{1-\overline{z_0}z}\right|.
\]

Integrating the pointwise estimate along curves gives the corresponding
global Schwarz-Pick estimate.

\begin{theorem}
\label{thm:qc-CK-global}
Let $H$ be a real Hilbert space, let $K\geq1$, and let
$F:\D\to\B_H$ be harmonic. Suppose that $F$ is $K$-quasiconformal at every
point of $\D$ in the pointwise sense. Then, for all $z_0,z\in\D$,
\begin{equation}\label{eq:qc-CK-global}
 \CK_{\B_H}\bigl(F(z_0),F(z)\bigr)
 \leq K\,d_\D(z_0,z).
\end{equation}
\end{theorem}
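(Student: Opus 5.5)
The plan is to integrate the infinitesimal estimate \eqref{eq:qc-CK-infinitesimal} of Theorem~\ref{thm:qc-CK-infinitesimal} along almost-geodesic curves in $\D$. Fix $z_0,z\in\D$ and $\varepsilon>0$. We choose a piecewise $C^1$ curve $\gamma:[0,1]\to\D$ joining $z_0$ to $z$ such that
\[
 \int_0^1 P_\D\bigl(\gamma(t);\gamma'(t)\bigr)\,dt\leq d_\D(z_0,z)+\varepsilon .
\]
The hyperbolic segment itself would do, but an approximate minimizer avoids any appeal to the explicit formula. We then set $\Gamma=F\circ\gamma$. Since $F$ is harmonic, it is $C^\infty$ in $\D$, so $\Gamma:[0,1]\to\B_H$ is a piecewise $C^1$ curve joining $F(z_0)$ to $F(z)$. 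On each $C^1$ piece the chain rule gives $\Gamma'(t)=dF_{\gamma(t)}\gamma'(t)$.

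Next we apply the hypothesis pointwise. Since $F$ is $K$-quasiconformal in the pointwise sense at every point $\gamma(t)$, Theorem~\ref{thm:qc-CK-infinitesimal} applies at $z_0=\gamma(t)$ with $v=\gamma'(t)$ and yields
\[
 \CK_{\B_H}\bigl(\Gamma(t);\Gamma'(t)\bigr)\leq K\,P_\D\bigl(\gamma(t);\gamma'(t)\bigr)
\]
for all $t$ outside the finitely many break points. Before integrating we check measurability. The map $(p,v)\mapsto\CK_{\B_H}(p;v)$ is continuous on $\B_H\times H$ by \eqref{eq:CK-def}, so the left side is piecewise continuous in $t$. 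It is also bounded on each closed piece, because $\Gamma$ stays in a compact subset of $\B_H$, where $1-|\Gamma|^2$ is bounded below. We can therefore integrate from $0$ to $1$. By the definition \eqref{eq:CK-path-distance}, $\CK_{\B_H}(F(z_0),F(z))$ is at most the integral of the left side, which is at most $K(d_\D(z_0,z)+\varepsilon)$. Letting $\varepsilon\to0$ proves \eqref{eq:qc-CK-global}.

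I see no real obstacle here; the substantive analytic work is already contained in Theorem~\ref{thm:qc-CK-infinitesimal}. The one point needing care is that the path distance \eqref{eq:CK-path-distance} is an infimum over piecewise $C^1$ curves, so the image curve $F\circ\gamma$ must be admissible. This holds because $F$ is smooth and maps $\D$ into $\B_H$. One should also note that the pointwise hypothesis is needed at every point along $\gamma$, not only at $z_0$, which is exactly what the theorem assumes.
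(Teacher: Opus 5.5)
Your proposal is correct and follows essentially the same route as the paper: integrate the infinitesimal estimate of Theorem~\ref{thm:qc-CK-infinitesimal} along a piecewise $C^1$ curve $\gamma$, use $F\circ\gamma$ as a competitor in \eqref{eq:CK-path-distance}, and then optimize over $\gamma$. The paper takes the infimum over all curves directly, which is equivalent to your $\varepsilon$-approximate minimizer. Your added checks (admissibility of $F\circ\gamma$ and integrability of the integrand) are correct.
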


\begin{proof}
We consider a piecewise $C^1$ curve $\gamma:[0,1]\to\D$ joining $z_0$ to $z$.
The definition of the path distance and
Theorem~\ref{thm:qc-CK-infinitesimal} give
\[
\begin{aligned}
 \CK_{\B_H}\bigl(F(z_0),F(z)\bigr)
 &\leq \int_0^1
 \CK_{\B_H}\bigl(F(\gamma(t));(F\circ\gamma)'(t)\bigr)\,dt
 \\
 &\leq K\int_0^1
 P_\D\bigl(\gamma(t);\gamma'(t)\bigr)\,dt.
\end{aligned}
\]
After taking the infimum over all such curves, we obtain \eqref{eq:qc-CK-global}.
\end{proof}

\subsection{The conformal case}

Taking $K=1$ in Theorem~\ref{thm:qc-CK-infinitesimal} gives the conformal
contraction. The prescribed-value estimate in
Corollary~\ref{cor:ball-pointwise-hilbert} also determines when equality can
occur.

\begin{corollary}\label{cor:ball-CK-contraction}
Let \(H\) be a real Hilbert space, let \(F:\D\to\B_H\) be harmonic, and
suppose that \(dF_{z_0}\neq0\) is conformal. Then, for every
\(v\in T_{z_0}\D\),
\begin{equation}\label{eq:ball-CK-contraction}
 \CK_{\B_H}\bigl(F(z_0);dF_{z_0}v\bigr)
 \leq P_\D(z_0;v).
\end{equation}
Equivalently,
\[
 \CK_{\B_H}\bigl(F(z_0);dF_{z_0}v\bigr)
 \leq\frac12 ds_P(z_0;v).
\]
If \(v\neq0\), equality holds if and only if \(F\) is a conformal or
anticonformal diffeomorphism of \(\D\) onto the affine disk
\((F(z_0)+\Lambda)\cap\B_H\), where
\(\Lambda=dF_{z_0}(\R^2)\), and, when
\(\pi_\Lambda F(z_0)\neq0\), the vector \(dF_{z_0}v\) is parallel to
\(\pi_\Lambda F(z_0)\). When the projection vanishes, the directional
condition is vacuous, and every parametrization described above gives equality
in every nonzero tangent direction.
\end{corollary}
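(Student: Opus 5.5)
The inequality is the case $K=1$ of Theorem~\ref{thm:qc-CK-infinitesimal}. A nonzero conformal differential satisfies $0<\ell_F(z_0)=L_F(z_0)$, so $F$ is $1$-quasiconformal at $z_0$ in the pointwise sense. The refined factor $2K/(K+1-(K-1)s^2)$ then equals $1$, and \eqref{eq:ball-CK-contraction} follows. The reformulation through $ds_P$ is just \eqref{eq:poincare-curv-minus-one}. The equality analysis is more efficient if I reprove the inequality as a chain of two estimates and characterize equality in each.

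\textbf{The chain.} Write $p=F(z_0)$, $\Lambda=dF_{z_0}(\R^2)$, $\lambda_{z_0}$ for the conformal factor, and $c=\sqrt{1-|p|^2+|\pi_\Lambda p|^2}$. For $v\neq0$, Lemma~\ref{lem:CK-plane} applied to $y=dF_{z_0}v\in\Lambda$, with $|y|=\lambda_{z_0}|v|$, gives
\[
 \CK_{\B_H}(p;dF_{z_0}v)\leq\frac{c}{1-|p|^2}\,\lambda_{z_0}|v|.
\]
Corollary~\ref{cor:ball-pointwise-hilbert} then gives $\lambda_{z_0}\leq (1-|p|^2)/\bigl((1-|z_0|^2)c\bigr)$. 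Multiplying the two bounds yields $P_\D(z_0;v)$. Both factors are positive, so equality in the composite holds if and only if equality holds in each step.

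\textbf{Equality.} Equality in the second step holds exactly when $F$ is a conformal or anticonformal diffeomorphism onto $(p+\Lambda)\cap\B_H$; this is the equality clause of Corollary~\ref{cor:ball-pointwise-hilbert}. Equality in the first step holds exactly when $dF_{z_0}v$ is parallel to $\pi_\Lambda p$ if this projection is nonzero, and holds automatically if the projection vanishes; this is the equality clause of Lemma~\ref{lem:CK-plane}. Together these give the stated characterization.

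\textbf{The vacuous case.} The last sentence of the statement needs one check. When $\pi_\Lambda p=0$, take any parametrization described above. It attains equality in Corollary~\ref{cor:ball-pointwise-hilbert}, and by the lemma equality in the first step holds for every $v\in\Lambda$. Hence equality holds in every nonzero direction.

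\textbf{Main obstacle.} The only delicate point is direction-matching. The parallelism condition concerns the image vector $dF_{z_0}v$, and $v$ is free, so the condition is attainable. Since $dF_{z_0}$ is a conformal bijection of $\R^2$ onto $\Lambda$, some $v$ maps to a multiple of $\pi_\Lambda p$. The statement therefore carries no hidden inconsistency, and the argument requires no further computation.
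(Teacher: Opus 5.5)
Your proposal is correct and follows the paper's proof: the inequality is Theorem~\ref{thm:qc-CK-infinitesimal} with $K=1$, and the equality characterization comes from the chain through Lemma~\ref{lem:CK-plane} and Corollary~\ref{cor:ball-pointwise-hilbert}, with equality required in both steps. The hypothesis $\dim H\geq2$ needed for that corollary holds automatically here, since a nonzero conformal differential has two-dimensional image.
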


\begin{proof}
A nonzero conformal differential has equal maximal and minimal stretchings,
so \(F\) is \(1\)-quasiconformal at \(z_0\) in the pointwise sense.
The result is therefore Theorem~\ref{thm:qc-CK-infinitesimal} with \(K=1\).

For the equality statement, Lemma~\ref{lem:CK-plane} and
\eqref{eq:ball-pointwise} give \eqref{eq:ball-CK-contraction} directly.
Equality requires equality in both estimates. The first gives the stated
directional condition, and the equality statement in
Corollary~\ref{cor:ball-pointwise-hilbert} gives the affine-disk
parametrization. The converse follows from the same two results. For
\(H=\R^n\), this agrees with the finite-dimensional equality geometry.
\end{proof}

\subsection{The geometric meaning of the projection factor}
\label{subsec:klein-factor-records}

The factor $c/(1-|p|^2)$ in Lemma~\ref{lem:CK-plane} is the reciprocal
of the sharp conformal factor $(1-|p|^2)/c$. Their product gives the
constant $1$ in Corollary~\ref{cor:ball-CK-contraction}.

The vector \(p\) and the linear two-plane \(\Lambda\) play different
roles; in particular, \(p\in\Lambda\) is not assumed. The equality
\[
 \langle p,y\rangle=\langle\pi_\Lambda p,y\rangle,
 \qquad y\in\Lambda,
\]
shows why only the projection is visible to the differential. If \(p\neq0\)
and \(\pi_\Lambda p=0\), then the affine plane \(p+\Lambda\) is contained in
the tangent hyperplane at \(p\) to the sphere centered at the origin and
passing through \(p\), and the Klein norm is constant on Euclidean unit
vectors in \(\Lambda\). When \(p=0\), the Klein norm is isotropic on every two-dimensional
subspace. If \(p\in\Lambda\) and \(p\neq0\), the plane contains the radial
direction, where the Klein norm is maximal. In the remaining cases, the
maximal value on \(\Lambda\) is determined by \(|\pi_\Lambda p|\), as in
Lemma~\ref{lem:CK-plane}.

The factor $K$ in Theorem~\ref{thm:qc-CK-infinitesimal} is uniform.
The refinement \eqref{eq:qc-CK-refined} retains the dependence on $s$
and gives $L_F(0)\leq2K/(K+1)$ at the center. For $K>1$, the optimal constant $M_K$ obtained in
Section~\ref{sec:center-distortion} is smaller still. An additional alignment hypothesis gives the estimate in
Proposition~\ref{prop:qc-pairing-estimate}.

\section{Cylinder targets and aligned differential distortion}\label{sec:cylinder-aligned}

\subsection{Horizontal and nonhorizontal directions}

Cylindrical targets show why boundedness matters for the
supporting-hyperplane estimates used above. For a real Hilbert space
\(H_1\), we consider the cylinder
\[
 \mathcal C=\D\times H_1\subset \C\oplus H_1.
\]
If $H_1\neq\{0\}$, the cylinder is unbounded in the vertical directions,
and every real linear functional with a nonzero $H_1$-component is
unbounded on the target. A finite optimal bound remains available when the
differential is horizontal.

\begin{proposition}\label{prop:horizontal-cylinder}
Let \(F=(f,V):\D\to\mathcal C\) be harmonic. Suppose that \(dF_0\) is either zero or conformal, and is horizontal, that is,
\[
 dF_0(\R^2)\subset \C\oplus\{0\}.
\]
Then
\begin{equation}\label{eq:horizontal-cylinder-bound}
 \|dF_0\|\leq 1-|f(0)|^2.
\end{equation}
The estimate is sharp. Equality holds if and only if $f$ is a conformal
or anticonformal automorphism of $\D$. The vertical component may be any
harmonic map with $dV_0=0$.
\end{proposition}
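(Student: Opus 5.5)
The plan is to reduce everything to the planar component \(f\) and then apply the disk theorem. The vertical component is handled first. The image of \(dF_0\) lies in \(\C\oplus\{0\}\), so \(dV_0=V_x(0)\oplus V_y(0)=0\). Hence \(dF_0=df_0\oplus0\), and \(\|dF_0\|=\|df_0\|\).

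Next, \(f=\pi_\C\circ F\) is the composition of \(F\) with a bounded linear map, so it is harmonic. It maps \(\D\) into \(\D\) because \(F(\D)\subset\mathcal C=\D\times H_1\). Since \(dF_0\) is zero or conformal, \(df_0\) is zero or conformal, possibly orientation-reversing. By Lemma~\ref{lem:operator-norm}, conformality of a nonzero planar differential means that exactly one of \(f_z(0)\) and \(f_{\overline z}(0)\) vanishes, and then \(\|df_0\|\) is the modulus of the other one.

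In the orientation-preserving case \(f_{\overline z}(0)=0\), Theorem~\ref{thm:main-disk} gives \(\|df_0\|=|f_z(0)|\leq1-|f(0)|^2\). Equality holds exactly when \(f\) is a holomorphic automorphism. In the orientation-reversing case \(f_z(0)=0\), I would apply Remark~\ref{rem:anti-holomorphic}, or equivalently Theorem~\ref{thm:main-disk} to \(\overline f\) or to \(f(\overline z)\). This gives the same bound, with equality exactly for anti-holomorphic automorphisms. If \(dF_0=0\), the bound is trivial and strict: \(|f(0)|<1\) because \(f\) maps into the open disk. This proves \eqref{eq:horizontal-cylinder-bound} together with the characterization of \(f\) in the equality case.

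For sharpness and the converse, take any conformal or anticonformal automorphism \(f\) of \(\D\) and any harmonic \(V:\D\to H_1\) with \(dV_0=0\). Then \(F=(f,V)\) is harmonic and maps into \(\mathcal C\), since the second factor is unrestricted. Its differential \(dF_0=df_0\oplus0\) is horizontal and conformal, and it attains equality. Conversely, in any equality case the differential is conformal and horizontal, which forces \(dV_0=0\); no other constraint on \(V\) arises.

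There is no serious obstacle. The only point needing care is that boundedness is not used for \(F\): the argument passes to the bounded planar component \(f\) before invoking the boundary representation. This avoids Lemma~\ref{lem:hilbert-boundary} for the unbounded vertical part.
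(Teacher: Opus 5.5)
Your proof is correct and follows the paper's argument. Horizontality forces $dV_0=0$, so $\|dF_0\|=\|df_0\|$. Theorem~\ref{thm:main-disk} and Remark~\ref{rem:anti-holomorphic} then give the bound and the equality cases, and the case $dF_0=0$ is disposed of by $|f(0)|<1$.
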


\begin{proof}
If $dF_0=0$, the inequality is strict because $|f(0)|<1$.
Otherwise, horizontality gives $dV_0=0$ and
$\|dF_0\|=\|df_0\|$. The planar differential $df_0$ is conformal or
anticonformal. Theorem~\ref{thm:main-disk} and
Remark~\ref{rem:anti-holomorphic} give
\eqref{eq:horizontal-cylinder-bound} and show that equality is equivalent
to $f$ being a conformal or anticonformal disk automorphism. Conversely,
any such $f$, together with a harmonic $V$ satisfying $dV_0=0$, attains
equality.
\end{proof}

The sharpness statement is independent of the choice of vertical Hilbert space. Let \(H_1\neq\{0\}\), choose a unit vector \(h\in H_1\), fix \(0\leq r<1\), and put
\[
 A_r(z)=\frac{r+z}{1+rz},
 \qquad
 G_\kappa(z)=\kappa\,\re(z^2)h,
 \qquad \kappa>0.
\]
Then
\begin{equation}\label{eq:cylinder-example}
 F_\kappa(z)=\bigl(A_r(z),G_\kappa(z)\bigr)
\end{equation}
maps \(\D\) harmonically into \(\D\times H_1\), satisfies \(F_\kappa(0)=(r,0)\), and has horizontal tangent plane at the origin. Moreover,
\[
 \|d(F_\kappa)_0\|=|A_r'(0)|=1-r^2.
\]
On the boundary,
\[
 \|F_\kappa(e^{it})\|_{\C\oplus H_1}^2
 =|A_r(e^{it})|^2+\kappa^2\cos^2(2t)
 =1+\kappa^2\cos^2(2t).
\]
Thus \(\|d(F_\kappa)_0\|=1-r^2\) is attained although the boundary values do not lie in the Hilbert unit ball when \(\kappa>0\).

Assume first that \(\dim H_1\geq2\). Then the cylinder alone gives no uniform bound for the full class of nonhorizontal conformal differentials. Indeed, for orthonormal vectors \(e_1,e_2\in H_1\) and any \(L>0\), the map
\[
 F_L(x+iy)=\bigl(0,Lxe_1+Lye_2\bigr)
\]
is harmonic from \(\D\) into \(\mathcal C\), is conformal at every point, and satisfies \(\|d(F_L)_0\|=L\). Thus, when the vertical factor has dimension at least two, no finite estimate for \(\|dF_0\|\) can follow from the cylindrical target alone.

The case of a one-dimensional vertical factor is different and admits an
optimal universal bound.

\begin{remark}\label{rem:cylinder-one-dimensional}
The restriction on the vertical dimension is essential. Suppose that \(H_1=\R\), let \(F=(f,V):\D\to\D\times\R\) be harmonic, and assume that \(dF_0\) is conformal with factor \(\lambda\). Then \(\|dF_0\|=\lambda\). Since the orthogonal projection onto the disk factor is contractive,
\[
 \|df_0\|\leq\lambda.
\]
On the other hand, the linear functional \(dV_0:\R^2\to\R\) has a unit vector \(v\) in its kernel. Hence
\[
 \lambda=|dF_0v|=|df_0v|\leq\|df_0\|,
\]
and therefore \(\lambda=\|df_0\|\).

We write $f=P[\Phi_f]$ with $|\Phi_f|\leq1$ almost everywhere.
For every unit vector $w=(\cos\theta,\sin\theta)$, the Poisson derivative
formula gives
\[
 |df_0w|
 =\left|2\int_\T\Phi_f(e^{it})\cos(t-\theta)\dm\right|
 \leq2\int_\T|\cos(t-\theta)|\dm=\frac4\pi.
\]
Taking the supremum over \(w\) yields
\[
 \lambda=\|df_0\|\leq\frac4\pi.
\]
The constant \(4/\pi\) is sharp. We take
$u=P[U]$, where $U(e^{it})=\operatorname{sgn}(\cos t)$, as in the proof of
Proposition~\ref{prop:hilbert-unrestricted-center}. Then
$\nabla u(0)=(4/\pi,0)$. We set
\[
 F(x+iy)=\left(u(x+iy),\frac4\pi y\right).
\]
Then \(F:\D\to\D\times\R\) is harmonic and \(dF_0\) is conformal with factor \(4/\pi\).
\end{remark}

For an integrable boundary function in a nonhorizontal direction,
Corollary~\ref{cor:pairing-estimate-general} still converts any finite
estimate $I_s^c(\Phi)\leq S$ into a derivative bound. The unbounded
cylinder itself need not provide such an estimate.

\subsection{An aligned pointwise distortion estimate away from the center}

The M\"obius-weighted Hilbert pairing also yields an estimate for an
aligned nonconformal differential. We first recall the principal directions
of a real linear operator $\mathcal L:\R^2\to H$ with two-dimensional image.
The self-adjoint operator $\mathcal L^*\mathcal L$ has an orthonormal
eigenbasis $v_1,v_2$.
These are the principal stretching directions: if
$|\mathcal Lv_1|\geq|\mathcal Lv_2|$, these two quantities are the maximal
and minimal stretchings of $\mathcal L$, and their image directions are
orthogonal.

Let \(F=P[\Phi]\), decompose \(\Phi=(\xi,\eta)\) according to
\(H=\Lambda\oplus\Lambda^\perp\), and write
\[
 F(0)=p=p_\Lambda+p_\perp,
 \qquad p_\Lambda\in\Lambda,
 \qquad p_\perp\in\Lambda^\perp,
\]
and put \(b=|p_\perp|\). Suppose that \(dF_0(\R^2)=\Lambda\). We choose coordinates in
\(\R^2\) along the principal stretching directions of \(dF_0\), with the
maximal stretching first, and choose an oriented orthonormal basis
\(e_1,e_2\) of \(\Lambda\) along their images. The additional alignment
assumption is that \(p_\Lambda\) is parallel to the target direction of
maximal stretching; when \(p_\Lambda=0\), this condition is vacuous.
Choosing the signs of the principal direction corresponding to the
maximal stretching and of its image vector consistently, we may arrange
that
\[
 p_\Lambda=ae_1,
 \qquad a=|p_\Lambda|,
\]
and
\begin{equation}\label{eq:qc-normalization}
 F_x(0)=\alpha e_1,
 \qquad
 F_y(0)=\beta e_2,
 \qquad
 \alpha\geq\beta>0.
\end{equation}
Here \(\alpha=L_F(0)\) and \(\beta=\ell_F(0)\). For a conformal differential this alignment can always be achieved after fixing the direction of \(p_\Lambda\); for a general nonconformal differential it is an additional geometric hypothesis. The center case, treated below, has no alignment restriction.

For the weighted estimate, we fix \(c>0\) and \(0\leq s<1\) and use this
choice of \(p_\perp\) and \(e_1,e_2\) in the definitions of \(E_s^c\),
\(w_s\), and \(I_s^c\) from Section~\ref{sec:weighted-support}. Here
\(c\) and \(s\) are free parameters. In the round-section application below,
\(c\) is the radius of the section and \(s=a/c\), exactly as in
\eqref{eq:directional-s}.

Let \(f_\Lambda=P[\phi]\) be the complex coordinate of the \(\Lambda\)-component. Then
\[
 (f_\Lambda)_x(0)=\alpha,
 \qquad
 (f_\Lambda)_y(0)=i\beta,
\]
so
\begin{equation}\label{eq:qc-fourier-coefficients}
 \widehat\phi(1)=\frac{\alpha+\beta}{2},
 \qquad
 \widehat\phi(-1)=\frac{\alpha-\beta}{2}.
\end{equation}
Substituting these coefficients into the weighted identity gives the
following estimate.

\begin{proposition}\label{prop:qc-pairing-estimate}
Let \(H\) be a real Hilbert space, let \(\Lambda\subset H\) be an oriented
real two-plane with orthonormal basis \(e_1,e_2\), and fix \(c>0\) and
\(0\leq s<1\). Let
\(\Phi=(\xi,\eta)\in L^1(\T,\Lambda\oplus\Lambda^\perp)\), and denote by
\(\phi\) the complex coordinate of \(\xi\) under the identification
\(e_1\leftrightarrow1\), \(e_2\leftrightarrow i\). Let \(F=P[\Phi]\), and
suppose that
\[
 F(0)=ae_1+p_\perp,
 \qquad a\geq0,
 \qquad p_\perp\in\Lambda^\perp,
 \qquad b=|p_\perp|.
\]
Assume that \eqref{eq:qc-normalization} holds. If, for some \(S\in\R\), the
weighted-pairing estimate \eqref{eq:general-pairing-estimate} holds, then
\begin{equation}\label{eq:qc-basic-ineq}
 \frac c2\bigl((1+s^2)\alpha+(1-s^2)\beta\bigr)
 +2csa+(1+s^2)b^2
 \leq S.
\end{equation}
If \(\alpha/\beta\leq K\), then
\begin{equation}\label{eq:qc-general-alpha-bound}
 \alpha\leq
 \frac{2K\bigl(S-(1+s^2)b^2-2csa\bigr)}
 {c\bigl(K(1+s^2)+(1-s^2)\bigr)}.
\end{equation}
\end{proposition}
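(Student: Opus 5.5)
The plan is to apply Theorem~\ref{thm:weighted-pairing-identity} directly and then use the distortion bound to trade $\beta$ for $\alpha$.

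\textbf{Step 1: check the hypotheses of the weighted identity.} By Lemma~\ref{lem:complex-poisson} applied to $f_\Lambda=P[\phi]$, we have $\widehat\phi(0)=a$, and the normalization \eqref{eq:qc-normalization} gives the coefficients \eqref{eq:qc-fourier-coefficients}. For the normal component, testing $F=P[\Phi]$ against vectors of $\Lambda^\perp$ and using Lemma~\ref{lem:real-poisson} gives $\int_\T\eta\dm=p_\perp$. Since $F_x(0)=\alpha e_1\in\Lambda$, we also get
\[
 \int_\T\eta(e^{it})\cos t\dm=\tfrac12\pi_{\Lambda^\perp}F_x(0)=0.
\]
Hence \eqref{eq:general-fourier-plane} and \eqref{eq:general-orthogonal-identities} hold with
\[
 A=\frac{\alpha+\beta}2,\qquad B=\frac{\alpha-\beta}2,
\]
both real.

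\textbf{Step 2: evaluate the pairing.} Formula \eqref{eq:weighted-pairing-formula} gives
\[
 I_s^c(\Phi)=c\Bigl(\frac{\alpha+\beta}2+2sa+s^2\frac{\alpha-\beta}2\Bigr)+(1+s^2)b^2 .
\]
Collecting terms, the bracket equals $\tfrac12\bigl((1+s^2)\alpha+(1-s^2)\beta\bigr)+2sa$. Combined with the hypothesis $I_s^c(\Phi)\le S$ (equivalently, Corollary~\ref{cor:pairing-estimate-general}), this is exactly \eqref{eq:qc-basic-ineq}.

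\textbf{Step 3: use $\alpha/\beta\le K$.} This gives $\beta\ge\alpha/K$. Since $1-s^2>0$ and $c>0$, the left side of \eqref{eq:qc-basic-ineq} is bounded below by
\[
 \frac c2\,\alpha\,\frac{K(1+s^2)+(1-s^2)}{K}+2csa+(1+s^2)b^2 .
\]
Moving the last two terms to the right and dividing by the positive factor $c\bigl(K(1+s^2)+1-s^2\bigr)/(2K)$ yields \eqref{eq:qc-general-alpha-bound}.

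\textbf{Main obstacle.} The only delicate point is the vanishing of the first cosine moment of $\eta$. This needs the normalization that $F_x(0)$ lies in $\Lambda$, together with the Bochner form of Lemma~\ref{lem:real-poisson}, which is obtained by applying the scalar lemma to each $\langle\eta,h\rangle$ with $h\in\Lambda^\perp$. The sine moment is not needed here, just as in Theorem~\ref{thm:weighted-pairing-identity}. The sign bookkeeping for $\widehat\phi(-1)$ should also be confirmed once: $f_{\overline z}=\tfrac12(f_x+if_y)=\tfrac12(\alpha+i\cdot i\beta)=(\alpha-\beta)/2$.
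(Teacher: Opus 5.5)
Your proposal is correct and follows essentially the same route as the paper. You verify the hypotheses of Theorem~\ref{thm:weighted-pairing-identity} from the prescribed value and the normalization \eqref{eq:qc-normalization}, substitute $A=(\alpha+\beta)/2$ and $B=(\alpha-\beta)/2$ into the pairing bound of Corollary~\ref{cor:pairing-estimate-general}, and then use $\beta\geq\alpha/K$ together with $1-s^2\geq0$ to solve for $\alpha$.
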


\begin{proof}
The identity \(F(0)=ae_1+p_\perp\) gives \(\widehat\phi(0)=a\), while
\eqref{eq:qc-normalization} gives \eqref{eq:qc-fourier-coefficients}.
The normal component has mean $p_\perp$ and vanishing first derivatives,
so \eqref{eq:general-orthogonal-identities} holds. We may therefore apply
Corollary~\ref{cor:pairing-estimate-general}. Substitution in
\eqref{eq:pairing-estimate-identity} gives \eqref{eq:qc-basic-ineq}. If
\(\alpha/\beta\leq K\), then \(\beta\geq\alpha/K\). Since
\(1-s^2\geq0\), the left-hand side of \eqref{eq:qc-basic-ineq} is at least
\[
 \frac c2\alpha\left((1+s^2)+\frac{1-s^2}{K}\right)
 +2csa+(1+s^2)b^2.
\]
Solving the resulting inequality for \(\alpha\) gives \eqref{eq:qc-general-alpha-bound}.
\end{proof}

For the unit ball, where \(S=1+s^2\), \(c^2+b^2=1\), and \(s=a/c\), the estimate becomes
\begin{equation}\label{eq:qc-ball-alpha-bound}
 \alpha\leq
 \frac{2K(1-|p|^2)}
 {c\bigl((K+1)+(K-1)s^2\bigr)},
 \qquad
 c=\sqrt{1-|p|^2+|\pi_\Lambda p|^2}.
\end{equation}
For $s>0$ and $K>1$, the plus sign in this denominator gives a stronger
bound than the minus sign in \eqref{eq:qc-euclidean-ball-bound}; it comes
from the alignment assumption. For \(K=1\), estimate \eqref{eq:qc-ball-alpha-bound} reduces exactly to the conformal estimate \eqref{eq:ball-estimate}. If \(K>1\), equality in \eqref{eq:qc-ball-alpha-bound} is impossible under the hypotheses used in its derivation. Indeed, equality in the real Cauchy-Schwarz inequality used for the sphere would force
\[
 \Phi(z)=E_s^c(z)
\]
for almost every \(z\in\T\). The corresponding Poisson integral parametrizes the affine disk and has a conformal differential, so
\[
 \alpha=\beta=c(1-s^2)>0.
\]
Equality in the step \(\beta\geq\alpha/K\), however, would require \(\beta=\alpha/K\). These conditions are incompatible when \(K>1\). Thus the displayed upper bound is not attained for $K>1$. This nonattainment
does not by itself determine the supremum for the nonconformal
prescribed-value problem away from the center.

Away from the center, the boundary function used above is not
simultaneously adapted to the prescribed value and to the principal
stretching directions of a general nonconformal differential. At the
center, rotational symmetry removes this incompatibility, and the
pointwise distortion problem can be solved completely.

\section{The center problem under a pointwise distortion bound}
\label{sec:center-distortion}

At the center, there is no preferred target direction, so the differential
can be normalized along its principal stretching directions. After the change of parameter \(\tau=\tan\theta\), the functions
\(\alpha(\tau)\) and \(\beta(\tau)\) below correspond to the
principal-derivative parametrization used in \cite{Zwonek2022}. We impose the additional constraint
$L_F(0)/\ell_F(0)\leq K$, determine the optimal parameter, prove its
uniqueness, and characterize all equality cases in an arbitrary real
Hilbert ball.

The value of this extremal problem is independent of the Hilbert-space
dimension once it is at least two. Indeed, projection onto
$dF_0(\R^2)$ preserves $F(0)=0$ and the differential and takes values
in the unit disk of that plane. Conversely, every planar map can be
embedded isometrically in $H$. The proof below also shows that equality
forces all orthogonal components to vanish.

Recall from Definition~\ref{def:pointwise-distortion} that \(F\) is \(K\)-quasiconformal at the origin in the pointwise sense precisely when
\[
 0<\ell_F(0)\leq L_F(0)\leq K\ell_F(0).
\]

\begin{theorem}\label{thm:center-distortion}
Let \(H\) be a real Hilbert space with \(\dim H\geq2\), and fix an orthonormal pair \(e_1,e_2\in H\). For \(0<\tau\leq1\), define
\[
 D_\tau(t)=\sqrt{\cos^2 t+\tau^2\sin^2 t}
\]
and
\begin{equation}\label{eq:center-qc-boundary}
 \Phi_\tau(e^{it})=
 \frac{\cos t\,e_1+\tau\sin t\,e_2}{D_\tau(t)}.
\end{equation}
Let \(F_\tau=P[\Phi_\tau]\). Then \(F_\tau:\D\to\B_H\) is harmonic, \(F_\tau(0)=0\), and
\[
 F_{\tau,x}(0)=\alpha(\tau)e_1,
 \qquad
 F_{\tau,y}(0)=\beta(\tau)e_2,
\]
where
\begin{equation}\label{eq:alpha-beta-tau}
 \alpha(\tau)=\frac1\pi\int_0^{2\pi}
 \frac{\cos^2 t}{D_\tau(t)}\,dt,
 \qquad
 \beta(\tau)=\frac1\pi\int_0^{2\pi}
 \frac{\tau\sin^2 t}{D_\tau(t)}\,dt.
\end{equation}
Both quantities are positive, and \(\alpha(\tau)\geq\beta(\tau)\).

For \(K\geq1\), let \(M_K\) denote the supremum of \(L_F(0)\) over all harmonic maps \(F:\D\to\B_H\) with \(F(0)=0\) that are \(K\)-quasiconformal at the origin in the pointwise sense.
Then
\begin{equation}\label{eq:center-qc-sharp-bound}
 M_K=\inf_{0<\tau\leq1}\frac{2J(\tau)}{1+\tau/K},
 \qquad
 J(\tau)=\int_\T D_\tau(t)\dm.
\end{equation}
For \(K=1\), the unique minimizing parameter is \(\tau_1=1\). For every \(K>1\), there is a unique minimizer \(\tau_K\in(0,1)\), characterized by
\begin{equation}\label{eq:center-qc-param}
 \frac{\alpha(\tau_K)}{\beta(\tau_K)}=K.
\end{equation}
Moreover,
\[
 M_K=\alpha(\tau_K),
\]
and \(F_{\tau_K}\) attains the bound. Equality holds if and only if the
map has the form \(\mathcal U\circ F_{\tau_K}\circ\mathcal R\), where \(\mathcal U:H\to H\) is an
orthogonal transformation and \(\mathcal R\) is a rotation or reflection of
\(\D\).
\end{theorem}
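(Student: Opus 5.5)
The plan is a duality argument: pair the boundary function against a test field, then optimize over \(\tau\). First I normalize an arbitrary competitor. Let \(F=P[\Phi]\) with \(F(0)=0\) be \(K\)-quasiconformal at the origin. I take the principal stretching directions of \(dF_0\) (the eigenbasis of \(dF_0^*dF_0\) recalled in Section~\ref{sec:cylinder-aligned}) and precompose with a rotation or reflection \(\mathcal R\) of \(\D\). I then postcompose with an orthogonal \(\mathcal U\) of \(H\) sending the two orthogonal image directions to \(e_1,e_2\). This gives
\[
 F_x(0)=\alpha e_1,\qquad F_y(0)=\beta e_2,\qquad \alpha=L_F(0)\geq\beta=\ell_F(0)\geq\alpha/K>0 .
\]
For \(0<\tau\leq1\), set \(\Psi_\tau(e^{it})=\cos t\,e_1+\tau\sin t\,e_2\), so that \(|\Psi_\tau|=D_\tau\). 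Lemma~\ref{lem:real-poisson}, applied to \(\langle F,e_1\rangle\) and \(\langle F,e_2\rangle\), gives
\[
 \int_\T\langle\Phi,\Psi_\tau\rangle\dm=\frac{\alpha+\tau\beta}{2}.
\]
Since \(|\Phi|\leq1\) a.e. (Lemma~\ref{lem:hilbert-boundary}), Cauchy--Schwarz bounds the left side by \(J(\tau)\). Using \(\beta\geq\alpha/K\), this yields \(\alpha(1+\tau/K)\leq2J(\tau)\) for every \(\tau\), which is the upper bound in \eqref{eq:center-qc-sharp-bound}.

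Equality in Cauchy--Schwarz at a given \(\tau\) forces \(\Phi=\Psi_\tau/D_\tau=\Phi_\tau\) a.e. This is exactly why \(\Phi_\tau\) is the candidate. For \(F_\tau\) itself, the facts are routine. Harmonicity and \(F_\tau(0)=0\) follow by symmetry. The map lands in \(\B_H\) because the Poisson integral of a nonconstant unit-vector field has norm \(<1\), by the positivity argument used in Section~\ref{sec:disk-local}. The formulas \eqref{eq:alpha-beta-tau} come from Lemma~\ref{lem:real-poisson}, and the cross terms vanish by parity. The inequality \(\alpha\geq\beta\) for \(\tau\leq1\) follows by comparing integrands after \(t\mapsto t+\pi/2\).

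Next I compute with \(g(\tau)=2J(\tau)/(1+\tau/K)\). Writing \(D_\tau^2/D_\tau\) gives \(2J=\alpha+\tau\beta\). Differentiating under the integral gives \(2J'(\tau)=\beta(\tau)\). Hence
\[
 g'(\tau)\,(1+\tau/K)^2=\beta(1+\tau/K)-\frac{\alpha+\tau\beta}{K}=\beta-\frac{\alpha}{K}.
\]
Thus the critical points are exactly the solutions of \eqref{eq:center-qc-param}, and at such a point \(g(\tau_K)=\alpha(\tau_K)\). At \(\tau=1\) one has \(\alpha=\beta=1\), so for \(K=1\) we get \(g'\leq0\) with minimum at \(\tau_1=1\) and \(M_1=1\). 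As \(\tau\to0^+\), \(\beta\to0\) while \(\alpha\to4/\pi\), so \(\alpha/\beta\to\infty\).

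\textbf{Main obstacle.} The key claim is that \(\tau\mapsto\alpha(\tau)/\beta(\tau)\) is strictly decreasing on \((0,1]\). Granting it, \(g\) strictly decreases before the unique \(\tau_K\) and strictly increases after it, giving uniqueness of the minimizer and \(M_K=\alpha(\tau_K)\). Attainment also follows: \(F_{\tau_K}\) has ratio exactly \(K\), so it is admissible and \(L=\alpha(\tau_K)\). For the monotonicity, I will first try to write \((\alpha/\beta)'\) as a covariance. We have \(\partial_\tau D_\tau^{-1}=-\tau\sin^2t/D_\tau^3\). Then the sign of \((\alpha'\beta-\alpha\beta')\) should reduce to a Chebyshev-type inequality for the probability measure proportional to \(\sin^2t\,D_\tau^{-3}\,dt\), integrated against the oppositely monotone functions \(\cos^2t/\sin^2t\) and \(1\). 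Failing that, I would substitute \(u=\tan t\) and compare the resulting elliptic-type integrals directly.

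Finally, the equality cases. If \(L_F(0)=M_K\), then after normalization all inequalities at \(\tau=\tau_K\) are equalities. Hence \(\Phi=\Phi_{\tau_K}\) a.e. (so all components orthogonal to \(e_1,e_2\) vanish), and Poisson uniqueness gives \(F=\mathcal U\circ F_{\tau_K}\circ\mathcal R\). The converse is immediate, since \(\mathcal U\) and \(\mathcal R\) preserve \(F(0)=0\), the stretchings, and the target.
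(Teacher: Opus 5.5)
Your outline matches the paper's proof on every point but one: the same test field \(\cos t\,e_1+\tau\sin t\,e_2\), the same Cauchy--Schwarz dual bound \(L_F(0)+\tau\ell_F(0)\leq2J(\tau)\), the same formula \(g'(\tau)(1+\tau/K)^2=\beta-\alpha/K\), and the same equality argument. The exception is the step you yourself call the main obstacle, the strict decrease of \(\alpha/\beta\) on \((0,1]\), which you leave unproved. This is a genuine gap, because that step carries all of the following: existence and uniqueness of \(\tau_K\), the identity \(M_K=\alpha(\tau_K)\), and, via \(\alpha(1)/\beta(1)=1\), the strictness behind uniqueness of \(\tau_1=1\). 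Note that \(g'\leq0\) alone gives only that \(\tau=1\) is \emph{a} minimizer. The route you sketch does not settle it. A Chebyshev-type inequality applied to \(\cos^2t/\sin^2t\) and the constant function \(1\) is an identity, since a constant has zero covariance with everything, so it produces no sign. The fallback substitution \(u=\tan t\) is only named, not carried out.

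The missing idea follows from identities you have already written down, namely \(2J=\alpha+\tau\beta\) and \(2J'=\beta\). These give \(\alpha=2J-2\tau J'\), hence
\[
 \alpha'=-2\tau J'',\qquad \beta'=2J'',\qquad
 J''(\tau)=\int_\T\frac{\cos^2t\,\sin^2t}{D_\tau(t)^3}\dm>0.
\]
The last formula comes from differentiating \(\tau\sin^2t/D_\tau\) and using \(D_\tau^2-\tau^2\sin^2t=\cos^2t\). Then \(\alpha'\beta-\alpha\beta'=-2J''(\alpha+\tau\beta)=-4JJ''<0\). Equivalently, \(\alpha/\beta=J/J'-\tau\) has derivative \(-JJ''/J'^2<0\). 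This is exactly the paper's argument: the monotonicity is just the strict convexity of \(J\). With it, the rest of your proposal goes through as written. For \(K=1\), strict decrease of \(g\) on \((0,1)\) then follows from \(\alpha>\beta\) there; you could also get this directly from your pointwise comparison, since \(\sin^2t\,(1-\tau^4)>0\) a.e. for \(\tau<1\). The paper instead treats \(K=1\) separately through \(D_\tau\geq\cos^2t+\tau\sin^2t\).
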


\begin{proof}
\noindent\emph{Step 1: the maps \(F_\tau\) and their principal stretchings.}
The boundary values in \eqref{eq:center-qc-boundary} have norm one. Since
\[
 D_\tau(t+\pi)=D_\tau(t),
 \qquad
 \Phi_\tau(e^{i(t+\pi)})=-\Phi_\tau(e^{it}),
\]
their mean is zero. Hence \(F_\tau(0)=0\), and the Poisson integral takes values in the closed unit ball. In fact, \(F_\tau(\D)\subset\B_H\). If \(|F_\tau(z_0)|=1\) at an interior point, then the scalar harmonic function
\[
 z\longmapsto \langle F_\tau(z),F_\tau(z_0)\rangle\leq1
\]
attains its maximum at \(z_0\) and is therefore constant, contradicting
\(F_\tau(0)=0\). The Poisson formulas for the differential at the origin
give \eqref{eq:alpha-beta-tau}; the mixed terms vanish because
$\cos t\sin t/D_\tau(t)$ is odd.

To prove \(\alpha(\tau)\geq\beta(\tau)\), we reduce both integrals to
\([0,\pi/2]\) and replace \(t\) by \(\pi/2-t\) in the integral defining
\(\beta\). For \(0<t<\pi/2\), we cancel the common positive factor
\(\cos^2t\). The required pointwise comparison is
\[
 \frac{1}{\sqrt{\cos^2 t+\tau^2\sin^2 t}}
 \geq
 \frac{\tau}{\sqrt{\sin^2 t+\tau^2\cos^2 t}},
\]
whose square is equivalent to \(\sin^2t(1-\tau^4)\geq0\).

\medskip
\noindent\emph{Step 2: a dual bound for an arbitrary admissible map.}
We take an arbitrary map \(F\) in the class defining \(M_K\). By Lemma~\ref{lem:hilbert-boundary}, we write \(F=P[\Phi]\) with \(\Phi\in L^\infty(\T,H)\) and \(|\Phi|\leq1\) almost everywhere. After precomposing with a rotation or reflection of \(\D\) and applying an orthogonal transformation of \(H\), we may choose the principal stretching directions so that
\[
 F_x(0)=L_F(0)e_1,\qquad F_y(0)=\ell_F(0)e_2.
\]
For every \(0<\tau\leq1\), the pointwise Cauchy-Schwarz inequality gives
\[
 \langle\Phi(e^{it}),\cos t\,e_1+\tau\sin t\,e_2\rangle
 \leq D_\tau(t).
\]
Integration and the Poisson formulas for the differential at the origin yield
\begin{equation}\label{eq:center-dual-bound}
 L_F(0)+\tau\ell_F(0)\leq2J(\tau).
\end{equation}
Since \(L_F(0)\leq K\ell_F(0)\), we have \(\ell_F(0)\geq L_F(0)/K\). Therefore
\begin{equation}\label{eq:center-alpha-dual}
 L_F(0)\leq\frac{2J(\tau)}{1+\tau/K},
 \qquad 0<\tau\leq1.
\end{equation}
Taking the infimum in \eqref{eq:center-alpha-dual} and then the supremum
over the admissible maps yields
\[
 M_K\leq\inf_{0<\tau\leq1}
 \frac{2J(\tau)}{1+\tau/K}.
\]

\medskip
\noindent\emph{Step 3: existence and uniqueness of the minimizing parameter
for \(K>1\).}
The function in \eqref{eq:center-qc-sharp-bound} extends continuously to
\([0,1]\) by
\[
 J(0)=\int_\T|\cos t|\dm=\frac2\pi.
\]
For \(\tau>0\), differentiation under the integral sign gives
\[
 J'(\tau)=\frac{\beta(\tau)}2,
 \qquad
 J''(\tau)=\int_\T
 \frac{\cos^2t\,\sin^2t}{D_\tau(t)^3}\dm>0.
\]
To compute the right derivative at the origin, we observe that
\[
 0\leq \frac{D_\tau(t)-|\cos t|}{\tau}
 =\frac{\tau\sin^2t}{D_\tau(t)+|\cos t|}
 \leq |\sin t|.
\]
For almost every \(t\), the quotient tends to \(0\) as \(\tau\to0^+\). Dominated convergence therefore gives \(J'(0+)=0\), so the right derivative at \(0\) of \(2J(\tau)/(1+\tau/K)\) is \(-4/(\pi K)\). Also \(J(1)=1\) and \(J'(1)=1/2\), so the left derivative at \(1\) is \(K(K-1)/(K+1)^2\). Thus, for \(K>1\), every minimizer lies in \((0,1)\).

The identity
\[
 2J(\tau)=\alpha(\tau)+\tau\beta(\tau)
\]
gives
\[
 \frac{\alpha(\tau)}{\beta(\tau)}
 =\frac{J(\tau)}{J'(\tau)}-\tau.
\]
Hence
\[
 \frac{d}{d\tau}\left(\frac{\alpha(\tau)}{\beta(\tau)}\right)
 =-\frac{J(\tau)J''(\tau)}{J'(\tau)^2}<0,
 \qquad 0<\tau<1.
\]
Moreover, \(\alpha(\tau)\to4/\pi\) and \(\beta(\tau)\to0\) as
\(\tau\to0^+\). This follows by dominated convergence from
$\cos^2t/D_\tau(t)\leq|\cos t|$ and
$\tau\sin^2t/D_\tau(t)\leq|\sin t|$. Since
\(\alpha(1)=\beta(1)=1\), the ratio \(\alpha(\tau)/\beta(\tau)\)
decreases continuously from \(+\infty\) to \(1\), and
\eqref{eq:center-qc-param} has a unique solution \(\tau_K\in(0,1)\)
for every \(K>1\).

\medskip
\noindent\emph{Step 4: sharpness and equality for \(K>1\).}
We have
\[
 \frac{d}{d\tau}\left(\frac{2J(\tau)}{1+\tau/K}\right)
 =\frac{\beta(\tau)-\alpha(\tau)/K}{(1+\tau/K)^2}.
\]
The derivative changes sign from negative to positive precisely at \(\tau_K\); hence \(\tau_K\) is the unique minimizer. At this parameter,
\[
 \frac{2J(\tau_K)}{1+\tau_K/K}=\alpha(\tau_K),
\]
and the map \(F_{\tau_K}\) satisfies
\(\alpha(\tau_K)/\beta(\tau_K)=K\). It therefore attains the sharp bound.

If a map in the class defining \(M_K\) attains equality, then equality holds in \eqref{eq:center-dual-bound} at \(\tau_K\) and \(\ell_F(0)=L_F(0)/K\). We put
\[
 q_K(t)=\cos t\,e_1+\tau_K\sin t\,e_2.
\]
Equality in \eqref{eq:center-dual-bound} gives
\[
 \int_\T
 \bigl(D_{\tau_K}(t)-\langle\Phi(e^{it}),q_K(t)\rangle\bigr)\dm=0.
\]
The integrand is nonnegative, so it vanishes almost everywhere. Since
$D_{\tau_K}(t)=|q_K(t)|>0$ and $|\Phi|\leq1$, we have
\[
 \left|\Phi(e^{it})-\frac{q_K(t)}{D_{\tau_K}(t)}\right|^2
 \leq
 2\left(
 1-\left\langle
 \Phi(e^{it}),\frac{q_K(t)}{D_{\tau_K}(t)}
 \right\rangle
 \right)
 =0
\]
almost everywhere. Hence
\[
 \Phi(e^{it})=
 \frac{\cos t\,e_1+\tau_K\sin t\,e_2}{D_{\tau_K}(t)}
\]
almost everywhere in the normalized coordinates. Poisson uniqueness gives
$F=F_{\tau_K}$ there; undoing the coordinate changes gives the stated
equality cases. Conversely, orthogonal transformations of $H$ and
rotations or reflections of $\D$ preserve the target ball, the value at
the origin, and both principal stretchings. Every map in the stated
family therefore attains equality.

\medskip
\noindent\emph{Step 5: the conformal endpoint \(K=1\).}
For \(K=1\), the global minimum is obtained directly from
\[
 D_\tau(t)\geq\cos^2t+\tau\sin^2t,
 \qquad 0\leq\tau\leq1.
\]
After squaring, this is equivalent to
\[
 \cos^2t\,\sin^2t\,(1-\tau)^2\geq0.
\]
Consequently,
\[
 J(\tau)\geq\int_\T
 \bigl(\cos^2t+\tau\sin^2t\bigr)\dm
 =\frac{1+\tau}{2},
\]
and \(2J(\tau)/(1+\tau)\geq1\), with equality only at \(\tau=1\). Thus \(M_1=1\), attained by
\[
 \Phi_1(e^{it})=\cos t\,e_1+\sin t\,e_2.
\]
The equality argument above applies verbatim with \(\tau_1=1\), which
completes the proof for the conformal endpoint.
\end{proof}

In the planar case \(H=\R^2\), identify \(\operatorname{span}\{e_1,e_2\}\)
with \(\C\). The boundary map
\[
 \phi_\tau(e^{it})=
 \frac{\cos t+i\tau\sin t}{D_\tau(t)}
\]
has modulus one, and
\[
 \frac{d}{dt}\arg\phi_\tau(e^{it})
 =\frac{\tau}{D_\tau(t)^2}>0.
\]
Its argument increases by \(2\pi\) as \(t\) runs from \(0\) to \(2\pi\);
hence \(\phi_\tau\) is an orientation-preserving homeomorphism of \(\T\).
The Rad\'o-Kneser-Choquet theorem therefore shows that
\(F_\tau=P[\phi_\tau]\) is an orientation-preserving harmonic
diffeomorphism of \(\D\) onto itself; see, for example, \cite{DurenBook}. Hence the optimal constant \(M_K\) is unchanged if, in the planar case, the class defining \(M_K\) is restricted to orientation-preserving harmonic diffeomorphisms.

In the planar orientation-preserving case, the distortion result can be
expressed directly in terms of the ratio of the two complex derivatives.

\begin{corollary}\label{cor:center-beltrami}
Let \(0\leq\varepsilon<1\) and set
\[
 K_\varepsilon=\frac{1+\varepsilon}{1-\varepsilon}.
\]
Among all orientation-preserving harmonic maps \(f:\D\to\D\) satisfying \(f(0)=0\) and
\[
 \frac{|f_{\overline z}(0)|}{|f_z(0)|}=\varepsilon,
\]
one has
\begin{equation}\label{eq:center-beltrami-operator}
 \sup \|df_0\|=M_{K_\varepsilon},
\end{equation}
and
\begin{equation}\label{eq:center-beltrami-gradient}
 \sup \frac{|\nabla f(0)|}{\sqrt2}
 =\frac{\sqrt{1+\varepsilon^2}}{1+\varepsilon}\,M_{K_\varepsilon}.
\end{equation}
Both suprema are attained by the planar map
\(F_{\tau_{K_\varepsilon}}\).
\end{corollary}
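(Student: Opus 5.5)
The plan is to reduce both suprema to Theorem~\ref{thm:center-distortion} with $K=K_\varepsilon$. The condition $f(0)=0$ matches that theorem directly. For an orientation-preserving harmonic $f$ with $|f_{\overline z}(0)|=\varepsilon|f_z(0)|$, Lemma~\ref{lem:operator-norm} gives
\[
 L_f(0)=|f_z(0)|(1+\varepsilon),\qquad \ell_f(0)=|f_z(0)|(1-\varepsilon).
\]
Here $f_z(0)\neq0$, since the ratio must be defined. Hence $\ell_f(0)>0$ and $L_f(0)/\ell_f(0)=K_\varepsilon$ exactly. So $f$ is $K_\varepsilon$-quasiconformal at the origin in the pointwise sense, and Theorem~\ref{thm:center-distortion} with $H=\R^2$ gives $\|df_0\|=L_f(0)\leq M_{K_\varepsilon}$.

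For the reverse inequality and attainment, I use the planar extremal $F_{\tau_K}$ with $K=K_\varepsilon$. By the remark after Theorem~\ref{thm:center-distortion}, this map is an orientation-preserving harmonic diffeomorphism of $\D$ fixing $0$. Its differential at the origin sends $\partial_x$ to $\alpha e_1$ and $\partial_y$ to $\beta e_2$, with $\alpha/\beta=K$ by \eqref{eq:center-qc-param}. In complex notation this gives
\[
 f_z(0)=\frac{\alpha+\beta}2,\qquad f_{\overline z}(0)=\frac{\alpha-\beta}2,
\]
so the ratio is $(K-1)/(K+1)=\varepsilon$. Thus $F_{\tau_K}$ lies in the class of the corollary and has $\|df_0\|=\alpha(\tau_K)=M_{K_\varepsilon}$, which proves \eqref{eq:center-beltrami-operator}. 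When $\varepsilon=0$ we have $K=1$ and $\tau_1=1$, and the extremal is the identity map, which is consistent.

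For the gradient, the fixed ratio gives
\[
 \frac{|\nabla f(0)|}{\sqrt2}=|f_z(0)|\sqrt{1+\varepsilon^2}=\frac{\sqrt{1+\varepsilon^2}}{1+\varepsilon}\,L_f(0).
\]
This quantity is a fixed positive multiple of $L_f(0)$ on the class, so its supremum is the same multiple of $M_{K_\varepsilon}$, attained by the same map, which gives \eqref{eq:center-beltrami-gradient}.

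The main point needing care is that the class here fixes the ratio exactly, whereas $M_K$ is defined with the inequality $L\leq K\ell$. The upper bound is unaffected, since the corollary's class is contained in the one defining $M_K$. Sharpness holds because the extremal $F_{\tau_K}$ has ratio exactly $K$, so it belongs to the smaller class as well. No other step needs more than direct substitution.
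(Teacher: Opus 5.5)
Your proposal is correct and follows essentially the same route as the paper. It converts the fixed ratio into $L_f(0)/\ell_f(0)=K_\varepsilon$, takes the upper bound from Theorem~\ref{thm:center-distortion}, and uses $\alpha(\tau_K)/\beta(\tau_K)=K$ to place $F_{\tau_{K_\varepsilon}}$ in the exact-ratio class; the gradient bound then follows by multiplying by the fixed factor $\sqrt{1+\varepsilon^2}/(1+\varepsilon)$. Your explicit computation of $f_z(0)$ and $f_{\overline z}(0)$ for the extremal spells out what the paper states in one line.
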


\begin{proof}
For an orientation-preserving planar differential,
\[
 L_f(0)=|f_z(0)|+|f_{\overline z}(0)|,
 \qquad
 \ell_f(0)=|f_z(0)|-|f_{\overline z}(0)|.
\]
Hence the displayed condition is equivalent to
\[
 \frac{L_f(0)}{\ell_f(0)}=K_\varepsilon.
\]
Theorem~\ref{thm:center-distortion} gives \(L_f(0)\leq M_{K_\varepsilon}\), while its planar extremal mapping satisfies \(\alpha(\tau_{K_\varepsilon})/\beta(\tau_{K_\varepsilon})=K_\varepsilon\), and therefore has precisely the prescribed value of \(\varepsilon\). This proves \eqref{eq:center-beltrami-operator}. Finally,
\[
 \frac{|\nabla f(0)|}{\sqrt2}
 =\sqrt{|f_z(0)|^2+|f_{\overline z}(0)|^2}
 =\frac{\sqrt{1+\varepsilon^2}}{1+\varepsilon}\,L_f(0),
\]
which gives \eqref{eq:center-beltrami-gradient} and the equality statement.
\end{proof}

The formula in Theorem~\ref{thm:center-distortion} can be written in classical special functions. For \(0\leq k<1\), let
\[
 \mathbf K(k)=\int_0^{\pi/2}\frac{dt}{\sqrt{1-k^2\sin^2t}},
 \qquad
 \mathbf E(k)=\int_0^{\pi/2}\sqrt{1-k^2\sin^2t}\,dt
\]
denote the complete elliptic integrals of the first and second kind.

\begin{corollary}\label{cor:center-elliptic}
Let \(0<\tau<1\) and \(k=\sqrt{1-\tau^2}\). Then
\begin{equation}\label{eq:center-elliptic-J}
 J(\tau)=\frac2\pi\mathbf E(k),
\end{equation}
and
\begin{equation}\label{eq:center-elliptic-alpha-beta}
 \alpha(\tau)=\frac4\pi
 \frac{\mathbf E(k)-\tau^2\mathbf K(k)}{1-\tau^2},
 \qquad
 \beta(\tau)=\frac{4\tau}{\pi}
 \frac{\mathbf K(k)-\mathbf E(k)}{1-\tau^2}.
\end{equation}
Consequently, for \(K>1\), the parameter \(\tau_K\) is the unique solution of
\begin{equation}\label{eq:center-elliptic-parameter}
 \frac{\mathbf E(k)-\tau^2\mathbf K(k)}
 {\tau\bigl(\mathbf K(k)-\mathbf E(k)\bigr)}=K,
 \qquad k=\sqrt{1-\tau^2},
\end{equation}
and \(M_K\) is obtained by substituting \(\tau=\tau_K\) in the first formula of \eqref{eq:center-elliptic-alpha-beta}.
\end{corollary}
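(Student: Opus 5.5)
The plan is to reduce all three integrals to $[0,\pi/2]$ by symmetry and then rewrite them through the substitution $t\mapsto\pi/2-t$. Under this substitution
\[
 D_\tau(\pi/2-u)=\sqrt{\sin^2u+\tau^2\cos^2u}=\sqrt{1-k^2\cos^2u},\qquad k^2=1-\tau^2 .
\]
Before substituting, I would write $D_\tau(t)^2=1-k^2\sin^2t$ directly. Because $D_\tau$ depends only on $\sin^2t$, it has period $\pi$ and is even about $\pi/2$. Hence
\[
 J(\tau)=\frac{1}{2\pi}\int_0^{2\pi}D_\tau\,dt=\frac{2}{\pi}\int_0^{\pi/2}\sqrt{1-k^2\sin^2t}\,dt=\frac2\pi\mathbf E(k).
\]
This is \eqref{eq:center-elliptic-J}. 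The same quarter-period reduction gives
\[
 \alpha(\tau)=\frac4\pi\int_0^{\pi/2}\frac{\cos^2t}{\sqrt{1-k^2\sin^2t}}\,dt,
 \qquad
 \beta(\tau)=\frac{4\tau}\pi\int_0^{\pi/2}\frac{\sin^2t}{\sqrt{1-k^2\sin^2t}}\,dt.
\]

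Next I would evaluate the integral $S=\int_0^{\pi/2}\sin^2t\,(1-k^2\sin^2t)^{-1/2}\,dt$. The algebraic identity $k^2\sin^2t=1-(1-k^2\sin^2t)$ gives $k^2S=\mathbf K(k)-\mathbf E(k)$. Then $\beta=\frac{4\tau}{\pi}\frac{\mathbf K-\mathbf E}{k^2}$, and since $k^2=1-\tau^2$ this is the stated formula for $\beta$. For $\alpha$, I would write $\cos^2t=1-\sin^2t$, so that $\int_0^{\pi/2}\cos^2t/D=\mathbf K-S$. Then
\[
 \mathbf K-\frac{\mathbf K-\mathbf E}{1-\tau^2}=\frac{\mathbf E-\tau^2\mathbf K}{1-\tau^2},
\]
which yields the formula for $\alpha$.

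As a cross-check I would verify the identity $2J=\alpha+\tau\beta$ from the proof of Theorem~\ref{thm:center-distortion}. Indeed,
\[
 \alpha+\tau\beta=\frac4\pi\,\frac{\mathbf E-\tau^2\mathbf K+\tau^2\mathbf K-\tau^2\mathbf E}{1-\tau^2}=\frac4\pi\mathbf E .
\]

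Finally, the ratio $\alpha/\beta$ equals the left side of \eqref{eq:center-elliptic-parameter}. Theorem~\ref{thm:center-distortion} already shows that $\tau_K$ is the unique solution in $(0,1)$ of $\alpha/\beta=K$ and that $M_K=\alpha(\tau_K)$, so both remaining claims follow at once. The only mildly delicate point is the bookkeeping of the symmetry reduction, in particular the factor $4/\pi$ versus $2/\pi$. That factor comes from $\frac1\pi\int_0^{2\pi}=\frac4\pi\int_0^{\pi/2}$ for integrands that are even about $0$ and $\pi/2$ and have period $\pi$. I would check it against the endpoint value $\alpha(1)=1$, obtained by letting $k\to0$: there $\mathbf E,\mathbf K\to\pi/2$, and a limit (L'Hôpital) argument gives $\alpha\to1$.
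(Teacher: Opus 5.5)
Your proposal is correct and takes essentially the same route as the paper. You reduce to a quarter period using $D_\tau(t)^2=1-k^2\sin^2t$, use $k^2\int_0^{\pi/2}\sin^2t\,(1-k^2\sin^2t)^{-1/2}\,dt=\mathbf K(k)-\mathbf E(k)$ for $\beta$, subtract from $\mathbf K(k)$ to get $\alpha$, and invoke Theorem~\ref{thm:center-distortion} for $\tau_K$ and $M_K=\alpha(\tau_K)$; the substitution $t\mapsto\pi/2-t$ you announce at the start is never needed.
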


\begin{proof}
By symmetry,
\[
 J(\tau)=\frac2\pi\int_0^{\pi/2}
 \sqrt{1-k^2\sin^2t}\,dt,
\]
which proves \eqref{eq:center-elliptic-J}. Also,
\[
 \mathbf K(k)-\mathbf E(k)
 =k^2\int_0^{\pi/2}
 \frac{\sin^2t}{\sqrt{1-k^2\sin^2t}}\,dt.
\]
This gives the formula for \(\beta\). Subtracting the last integral from \(\mathbf K(k)\) gives
\[
 \int_0^{\pi/2}
 \frac{\cos^2t}{\sqrt{1-k^2\sin^2t}}\,dt
 =\frac{\mathbf E(k)-\tau^2\mathbf K(k)}{1-\tau^2},
\]
and hence the formula for \(\alpha\). Equation \eqref{eq:center-elliptic-parameter} is \eqref{eq:center-qc-param} in these variables.
\end{proof}

The elliptic formulas also permit a direct comparison with classical
ellipse coefficients.

\begin{remark}\label{rem:classical-ellipse-coefficients}
For \(0<\tau<1\), identify
\(\operatorname{span}\{e_1,e_2\}\) with \(\C\), and write
\[
 \phi_\tau(e^{it})=
 \frac{\cos t+i\tau\sin t}{\sqrt{\cos^2t+\tau^2\sin^2t}}.
\]
With the Fourier convention used here,
\[
 \widehat{\phi_\tau}(1)=\frac{\alpha(\tau)+\beta(\tau)}2,
 \qquad
 \widehat{\phi_\tau}(-1)=\frac{\alpha(\tau)-\beta(\tau)}2.
\]
Let \(k=\sqrt{1-\tau^2}\). Substitution of \eqref{eq:center-elliptic-alpha-beta} gives
\[
 \widehat{\phi_\tau}(1)=
 \frac{2}{\pi}\frac{\mathbf E(k)+\tau\mathbf K(k)}{1+\tau},
 \qquad
 \widehat{\phi_\tau}(-1)=
 \frac{2}{\pi}\frac{\mathbf E(k)-\tau\mathbf K(k)}{1-\tau}.
\]
These expressions coincide with Wegmann's formula~(55), on p.~178 of
\cite{Wegmann1993}, for the pair of Fourier indices $1$ and $-1$.
The parameter $p_{\mathrm W}$ in that formula is the ratio of the minor
to the major semiaxis; setting $p_{\mathrm W}=\tau\in(0,1)$ gives
\[
 A_1=\widehat{\phi_\tau}(1),
 \qquad
 A_{-1}=\widehat{\phi_\tau}(-1).
\]
The ellipse $\Gamma_\tau(t)=\sin t-i\tau\cos t$ has semiaxes
\(1\) and \(\tau\), and
\[
 \frac{\Gamma_\tau'(t)}{|\Gamma_\tau'(t)|}
 =\frac{\cos t+i\tau\sin t}
 {\sqrt{\cos^2t+\tau^2\sin^2t}}
 =\phi_\tau(e^{it}).
\]
Hence, up to a shift of the boundary parameter and an orthogonal
transformation of the target plane, \(\phi_\tau\) is the unit
tangent-direction map of the corresponding ellipse. Thus this planar boundary
family and the two coefficients displayed above already occur in the
classical orientation-preserving setting.

In Theorem~\ref{thm:center-distortion}, this family solves the constrained
problem; its equality statement applies to
the full class of harmonic maps into $\B_H$.
\end{remark}

The formula also yields the monotonicity and endpoint behavior of \(M_K\).

\begin{corollary}\label{cor:center-monotonicity}
The function \(K\mapsto\tau_K\) is strictly decreasing on \([1,\infty)\), while \(K\mapsto M_K\) is strictly increasing. Moreover,
\[
 M_1=1,
 \qquad
 1<M_K<\min\left\{\frac{2K}{K+1},\frac4\pi\right\}\quad(K>1),
 \qquad
 \lim_{K\to\infty}M_K=\frac4\pi.
\]
\end{corollary}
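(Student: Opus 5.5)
The plan is to read off everything from the structure established in Theorem~\ref{thm:center-distortion}: the ratio $r(\tau)=\alpha(\tau)/\beta(\tau)$ is continuous and strictly decreasing on $(0,1]$, with $r(\tau)\to+\infty$ as $\tau\to0^+$ and $r(1)=1$. Since $\tau_K$ is defined by $r(\tau_K)=K$, and $\tau_1=1$, the map $K\mapsto\tau_K$ is the inverse of $r$. It is therefore continuous and strictly decreasing on $[1,\infty)$, and $\tau_K\to0^+$ as $K\to\infty$.

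For the monotonicity of $M_K=\alpha(\tau_K)$, I would show that $\alpha$ is strictly decreasing in $\tau$ on $(0,1)$. The direct route is to differentiate under the integral:
\[
 \alpha'(\tau)=-\int_\T\frac{2\tau\cos^2t\,\sin^2t}{D_\tau(t)^3}\dm<0 .
\]
Equivalently, $\alpha=2J-\tau\beta=2J-2\tau J'$ gives $\alpha'=-2\tau J''<0$, using $J''>0$ from Step~3. Composing a strictly decreasing $\alpha$ with a strictly decreasing $\tau_K$ shows that $M_K$ is strictly increasing. As a cross-check, the class defining $M_K$ grows with $K$, which gives weak monotonicity; uniqueness of the extremals then yields strictness.

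For the bounds, note first that $M_1=\alpha(1)=1$. For $K>1$, strict monotonicity gives $M_K>M_1=1$. The upper bound $M_K<4/\pi$ follows from $\alpha$ being strictly decreasing with $\alpha(0^+)=4/\pi$ and $\tau_K>0$. For $M_K<2K/(K+1)$, I would use the dual formula~\eqref{eq:center-qc-sharp-bound} with $\tau=1$, which gives $M_K\le 2J(1)/(1+1/K)=2K/(K+1)$. Strictness comes from uniqueness of the minimizer: $\tau_K\in(0,1)$ is the unique minimizer for $K>1$, so the value at $\tau=1$ is strictly larger. Finally, $\lim_{K\to\infty}M_K=\lim_{\tau\to0^+}\alpha(\tau)=4/\pi$, by the dominated convergence already recorded in Step~3 together with $\tau_K\to0$.

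The only real obstacle is justifying differentiation under the integral for $\alpha$ and confirming its sign. For $\tau$ in compact subsets of $(0,1]$, the derivative of the integrand is bounded uniformly, since $D_\tau\ge\tau$, so this is routine.
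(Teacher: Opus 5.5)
Your proposal is correct and follows the paper's proof. It obtains the monotonicity of $\tau_K$ from the strictly decreasing ratio $\alpha/\beta$, uses $\alpha'=-2\tau J''<0$ composed with $\tau_K$ for $M_K$, and gets $M_K<2K/(K+1)$ from the uniqueness of the minimizer $\tau_K<1$. The only variation is at the endpoint $K\to\infty$: you use $\tau_K\to0^+$ and $\alpha(0^+)=4/\pi$ directly, with strict decrease of $\alpha$ giving $M_K<4/\pi$. The paper instead sandwiches $M_K$ between $\alpha(\tau)$ (since $F_\tau$ is admissible for large $K$) and the infimum formula, and then gets strictness from the strict monotonicity of $M_K$.
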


\begin{proof}
The ratio \(\alpha(\tau)/\beta(\tau)\) is strictly decreasing, so
\(K\mapsto\tau_K\) is strictly decreasing. Since
\[
 2J(\tau)=\alpha(\tau)+\tau\beta(\tau)
 \qquad\text{and}\qquad
 \beta(\tau)=2J'(\tau),
\]
we have
\[
 \alpha(\tau)=2J(\tau)-2\tau J'(\tau),
\]
and therefore
\[
 \alpha'(\tau)=-2\tau J''(\tau)<0,
 \qquad 0<\tau<1.
\]
Since \(M_K=\alpha(\tau_K)\), the function \(K\mapsto M_K\) is strictly
increasing.

For $K>1$, the unique minimizing parameter satisfies $\tau_K<1$.
The value of the minimized function at $\tau=1$ is $2K/(K+1)$.
Uniqueness therefore gives
\[
 M_K<\frac{2K}{K+1}<K.
\]

The upper bound \(M_K\leq4/\pi\) follows from \eqref{eq:center-qc-sharp-bound} by letting \(\tau\to0^+\). Conversely, we fix \(\tau>0\). For every sufficiently large \(K\), the map
\(F_\tau\) satisfies the pointwise \(K\)-distortion condition, and therefore
\(M_K\geq\alpha(\tau)\). Hence
\[
 \liminf_{K\to\infty}M_K\geq\alpha(\tau).
\]
Passing to the limit \(\tau\to0^+\) gives the reverse inequality because \(\alpha(\tau)\to4/\pi\).
Therefore \(M_K\to4/\pi\) as \(K\to\infty\). Since \(M_K\) is strictly
increasing, it cannot attain this limiting value at a finite \(K\). Together
with \(M_1=1\), this proves
\[
 1<M_K<\min\left\{\frac{2K}{K+1},\frac4\pi\right\},
 \qquad K>1.
\]
\end{proof}

The limiting constant has a direct geometric interpretation. As
\(K\to\infty\), one has \(\tau_K\to0\) and, for almost every \(t\),
\[
 \Phi_{\tau_K}(e^{it})
 \longrightarrow \operatorname{sgn}(\cos t)e_1.
\]
Thus the normalized elliptic boundary function converges almost everywhere
to the two values \(e_1\) and \(-e_1\), while
\[
 \alpha(\tau_K)\longrightarrow\frac4\pi,
 \qquad
 \beta(\tau_K)\longrightarrow0.
\]
Dominated convergence also gives $L^2(\T,H)$ convergence of the boundary
functions, and their Poisson integrals converge locally uniformly in $\D$.
At \(K=1\), one has \(\tau_1=1\) and
\(\Phi_1(e^{it})=\cos t\,e_1+\sin t\,e_2\). The extremal family therefore
connects the conformal disk case continuously to the one-dimensional harmonic
map that attains equality in Proposition~\ref{prop:hilbert-unrestricted-center}.

At the conformal endpoint, the parameter and the optimal constant have the
following asymptotic expansions.

\begin{corollary}\label{cor:center-near-conformal}
As \(K\to1^+\),
\[
 \tau_K=1-2(K-1)+O\bigl((K-1)^2\bigr),
\]
and
\[
 M_K=1+\frac12(K-1)+O\bigl((K-1)^2\bigr).
\]
\end{corollary}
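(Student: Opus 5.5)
The plan is to expand $\alpha$, $\beta$, $J$ in powers of $h=1-\tau$ near $\tau=1$, and then invert the defining relation \eqref{eq:center-qc-param} for $\tau_K$. Everything is organized around the identities from the proof of Theorem~\ref{thm:center-distortion}:
\[
2J(\tau)=\alpha(\tau)+\tau\beta(\tau),\qquad \beta(\tau)=2J'(\tau),\qquad \alpha(\tau)=2J(\tau)-2\tau J'(\tau).
\]
It therefore suffices to know $J(1)$, $J'(1)$, $J''(1)$, together with a bound on $J'''$ near $1$.

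From the proof we already have $J(1)=1$ and $J'(1)=1/2$. The second derivative is
\[
J''(1)=\int_\T\cos^2t\sin^2t\,\dm=\tfrac18,
\]
since $D_1\equiv1$. Differentiating under the integral sign once more gives $J'''$ as an integral of a smooth function of $(t,\tau)$ for $\tau$ near $1$, because $D_\tau$ is bounded below there. Hence $J$ is $C^3$ (indeed real-analytic) near $\tau=1$, and all Taylor remainders are $O(h^2)$ or better.

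Using the formulas above, I get
\[
\beta(\tau)=1-\tfrac14 h+O(h^2),\qquad \alpha'(\tau)=-2\tau J''(\tau)=-\tfrac14+O(h),
\]
so $\alpha(\tau)=1+\tfrac14 h+O(h^2)$. Consequently
\[
\frac{\alpha}{\beta}=1+\tfrac12 h+O(h^2).
\]

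The ratio $\alpha/\beta$ is smooth in $\tau$ near $1$, and its derivative $-JJ''/J'^2$ equals $-1/2\neq0$ at $\tau=1$. The inverse function theorem then makes $\tau_K$ a $C^1$ function of $K$ near $1$, and the equation $K-1=\tfrac12(1-\tau_K)+O((1-\tau_K)^2)$ inverts to
\[
1-\tau_K=2(K-1)+O\bigl((K-1)^2\bigr).
\]
Substituting into the expansion of $\alpha$ gives
\[
M_K=\alpha(\tau_K)=1+\tfrac14\cdot 2(K-1)+O\bigl((K-1)^2\bigr)=1+\tfrac12(K-1)+O\bigl((K-1)^2\bigr).
\]
As a consistency check, at first order this agrees with $2K/(K+1)=1+\tfrac12(K-1)+O((K-1)^2)$, which fits the strict inequality $M_K<2K/(K+1)$ holding only at second order.

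The one step that needs care is the smoothness of $J$ at the endpoint $\tau=1$. The problem is fixed on $(0,1]$, so the expansion is one-sided, but $D_\tau(t)^2=\cos^2t+\tau^2\sin^2t\geq\tfrac14$ for $\tau\in[1/2,3/2]$. So $J$ extends smoothly across $\tau=1$, differentiation under the integral is justified, and the inverse function argument goes through cleanly.
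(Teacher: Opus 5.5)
Your proposal is correct and follows essentially the same route as the paper. Both arguments use $J(1)=1$, $J'(1)=1/2$, $J''(1)=1/8$ to get $(\alpha/\beta)'(1)=-1/2$ and $\alpha'(1)=-1/4$, invert $\alpha(\tau_K)/\beta(\tau_K)=K$ by the implicit (inverse) function theorem, and substitute into $M_K=\alpha(\tau_K)$; your separate expansions of $\alpha$ and $\beta$ and the explicit lower bound on $D_\tau$ near $\tau=1$ simply spell out the smoothness that the paper asserts in one line.
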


\begin{proof}
We set \(Q(\tau)=\alpha(\tau)/\beta(\tau)\). Since
\[
 J(1)=1,\qquad J'(1)=\frac12,\qquad
 J''(1)=\int_\T\cos^2t\sin^2t\dm=\frac18,
\]
we have
\[
 Q'(1)=-\frac{J(1)J''(1)}{J'(1)^2}=-\frac12.
\]
The integral formulas for $\alpha$ and $\beta$ extend smoothly to all
$\tau>0$. Since \(Q\) is smooth in a neighborhood of \(\tau=1\) and
\(Q'(1)=-1/2\neq0\), the implicit function theorem applied to
\(Q(\tau_K)=K\) gives the first expansion. Also,
\(\alpha'(1)=-2J''(1)=-1/4\), and \(M_K=\alpha(\tau_K)\), which gives
the second.
\end{proof}

\section{Concluding remarks}\label{sec:comparison}

The target-independent M\"obius-weighted Hilbert identity turns the
prescribed-value derivative extremal problem into a boundary-pairing
problem. For a round affine section,
Corollary~\ref{cor:mobius-extremality} gives a necessary and sufficient
integral criterion for the M\"obius parametrization to be extremal, while
Proposition~\ref{prop:section-support-criterion} gives a geometric
sufficient condition through supporting hyperplanes of the ambient target.

The examples in Section~\ref{sec:target-geometry} distinguish three
properties that need not coincide: roundness of an affine section,
extremality of its M\"obius parametrization, and the supporting-hyperplane
condition. Roundness alone does not imply extremality; the supporting
condition may hold in a non-ball target; and
Theorem~\ref{thm:balanced-complex-lines} shows that extremality on central
complex lines can persist even when the supporting condition fails.

For Hilbert balls, the same pairing yields both the sharp
prescribed-value conformal estimate and the pointwise distortion estimates.
Theorem~\ref{thm:ball-boundary-stability} gives an exact deficit identity
and quantitative boundary stability. At the center,
Theorem~\ref{thm:center-distortion} determines the optimal value under a
pointwise distortion constraint, the unique optimizing parameter, and all
equality cases. The corresponding prescribed-value optimization problem
for a general nonconformal differential away from the center is not
pursued here.

\section*{Data availability}
No data were used for the research described in this article.

\section*{Declaration of competing interest}
The authors declare that they have no known competing financial interests or
personal relationships that could have appeared to influence the work reported
in this paper.

\section*{Declaration of generative AI and AI-assisted technologies in the manuscript preparation process}
During the preparation of this work, the authors used Paperpal (integrated within the Overleaf platform) to assist with language editing, academic tone improvement, and consistency checking. After using this tool, the authors reviewed and edited the content as needed and take full
responsibility for the content of the published article.

\bibliographystyle{plain}
\bibliography{main}

\end{document}